\numberwithin{equation}{section}
\numberwithin{figure}{section}
\newtheorem{claim}{\bf \t}[part]
\newtheorem{theorem}{Theorem}[section]
\newtheorem{lemma}{Lemma}[section]
\newtheorem{proposition}[theorem]{Proposition}
\def\ep{\epsilon}
\def\e{\varepsilon}
\def\a{\alpha}
\def\b{\beta}
\def\g{\gamma}
\def\l{\lambda}
\def\r{\rho}
\def\o{\omega}
\def\pt{\partial}
\def\iy{\infty}
\def\div{\mathrm{div}}
\def\S{\mathcal{S}}
\def\n{\bm{n}}
\def\tn{\bm{\widetilde{n}}}
\def\io{\bm{\iota}}
\def\tio{\widetilde{\bm{\iota}}}
\def\t{\bm{\tau}}
\def\tt{\widetilde{\bm{\tau}}}
\def\tr{\tilde{\rho}}
\def\tu{\widetilde{u}}
\def\tg{\widetilde{G}}
\def\tpsi{\widetilde{\psi}}
\def\tnabla{\widetilde{\nabla}}
\def\vp{\varphi}
\def\tw{\widetilde{W}}
\def\br{\bar{\rho}}
\def\sg{\sigma}
\def\ud{\dot{u}}
\def\vo{\vec{\omega}}
\def\w41{\|\omega\|_{W^{4,1}(0,\iy)}}
\def\lap{\triangle}
\def\D{\mathbf{D}}
\newcommand \R{\mathbb{R}}
\begin{document}

\title[]
{Global existence theorem for the three-dimensional isentropic compressible Navier-Stokes flow in the exterior of a rotating obstacle}
\author{Tuowei Chen}
\address{Tuowei Chen,
    School of Mathematical Sciences, Fudan University,
	Shanghai 200433, P. R. China;}
\email{twchen16@fudan.edu.cn}

\author{Yongqian Zhang}
\address{Yongqian Zhang, School of Mathematical Sciences, Fudan University, Shanghai 200433, P. R. China}
\email{yongqianz@fudan.edu.cn}
\date{\today}

\begin{abstract}
\,\,In this paper, we consider the initial-boundary value problem of three-dimensional isentropic compressible Navier-Stokes equations with rotating effect terms in an exterior domain with Navier-slip boundary condition and with far-field vacuum. This problem is related to the motion of the compressible viscous flow past a rotating obstacle. We establish the global existence and uniqueness of classical solutions, provided that the initial mass is small. The initial data and the angular velocity of the obstacle are allowed to have large oscillations.
\end{abstract}
\keywords{compressible Navier-Stokes equations, global classical solution, exterior domain, rotating obstacle, vacuum}
\subjclass[2010]{
	35Q30; 
	76N15; 
    76U05; 
}
\maketitle

\section{Introduction}
In this paper, we consider the following three-dimensional isentropic compressible Navier-Stokes equations with rotating effect terms:
\begin{equation}
\left.\begin{cases}
\r_t+(U-\vec{\omega}\times x)\cdot\nabla\r+\r\div U=0,\,\,\,  x\in \Omega,\,t>0 \\
\r U_{t}+\r (U-\vec{\omega}\times x)\cdot\nabla U+\r\vo\times U-2\mu \div \D(U)-\lambda\nabla\div U+\nabla P(\r)=0.
\end{cases}\label{eq}
\right.
\end{equation}
Here, $\r\geq0$, $U=(U^1,U^2,U^3)$, and $P(\r)=a\r^\g(a>0,\g>1)$ are the fluid density, velocity and pressure, respectively. $\D(u)$ is the strain tensor,
\begin{align}
\D(U)=\frac{1}{2}\big(\nabla U+(\nabla U)^T\big),
\end{align}
where $\cdot^T$ stands for the transposition. $\mu$ and $\lambda$ are the constant viscosity coefficients, and in this paper, it will be always assumed that
\begin{align}
\mu > 0 \,\,\,\,\,\,\mathrm{and}\,\,\,\,\,\,\,2\mu+3\lambda > 0.
\end{align}
$\vo=\vec{\omega}(t)=\big(0,0,\omega(t)\big)$ and $\omega(t)$ is a given smooth function on $[0,\iy)$.

$\Omega$ is an exterior domain, that is
\begin{align}
\Omega=\R^3\setminus \S,\label{Omega}
\end{align}
where $\S$ is a compact and simply connected subset in $\R^3$ such that $(0,0,0)\in\S$. The boundary $\pt\Omega$ is a smooth hypersurface.

We look for global classical solutions to \eqref{eq}, with initial data
\begin{align}
\r(0,x)=\r_0(x),\,\,U(0,x)=U_0(x),\,\,\,\,\,\,\,x\in\Omega,
\end{align}
and with the boundary conditions
\begin{align}
\r(x,t)\rightarrow 0,\,\,\,\,U(x,t)\rightarrow 0,\,\,\,\mathrm{as}\,\,\,|x|\rightarrow \iy,\,(x,t)\in \Omega\times(0,\iy),\label{far}
\end{align}
and
\begin{equation}
\left.
\begin{cases}
(U-\vo\times x)\cdot \n=0,\,\,\,\,x\in\pt\Omega,t>0,\\
(\D(U)\n)_{\tau}+\a\big(U-(\vo\times x)\big)_{\tau}=0,\,\,\,\,x\in\pt\Omega,t>0.
\end{cases}\label{nab1}
\right.
\end{equation}
Here, $\n=(\n^1,\n^2,\n^3)$ stands for the outward unit normal to $\Omega$, and for any vector field $v$ on $\pt\Omega$, $v_{\tau}$ stands for the tangential part of $v$: $v_{\tau}=v-(v\cdot\n)$. In addition, $\a$ is the friction coefficient attached to the Navier-slip boundary condition \eqref{nab1} which was introduced by Navier \cite{Navier}.

We remark that our problem is concerning the motions of the compressible flow past a rotating rigid obstacle with angular velocity $\vo$, and it can be deduced by using a coordinate system attached to the rotating obstacle as in \cite{H1999,2014,CZ1,CZ2}.

There have been many works on incompressible flows past a rotating obstacle. Hishida \cite{H1998,H1999} first constructed a local mild solution within the framework of $L^2$, and later on, Geissert, Heck and Hieber \cite{G2006} extended this result to the $L^p$-case. For recent progress on global existence, we refer readers to \cite{H2009,H2014,H2018,H2019,T2020} and the references therein. For compressible flows past a rotating obstacle, Kra\v{c}mar, S., Ne\v{c}asov\'{a}, \v{S}. and Novotn\'{y}, A. \cite{2014} considered the motion of compressible viscous fluids around a rotating rigid obstacle with positive density and nonzero velocity at infinity, and they proved the global existence of weak solutions. The regularities and uniqueness of such weak solutions still remain open. Farwig and Pokorn\'{y} \cite{Farwig} considered a linearized model for compressible flows past a rotating obstacle in $\R^3$ with positive density at infinity, and they proved the existence of solutions in $L^q$-spaces. For the case with initial density having a compact support, the present authors \cite{CZ1} showed the local existence of a unique strong solution. For the case that the flow is restricted in a bounded domain, the present authors \cite{CZ2} obtained the global existence of a unique strong solution, provided that the initial data are close to some approximate solution.

For the multi-dimensional Navier-Stokes equations without rotating effect terms, Matsumura and Nishida \cite{Matsumura1,Matsumura2} first obtained the global existence of strong solutions with initial data close to a nonvacuum equilibrium in Sobolev space $H^3$. When an initial vacuum is allowed, the local existence and uniqueness of strong solutions and classical solutions are known in \cite{cho2004,cho2006,choe2003,Huang2020}. For the global existence of weak solutions, a breakthrough is due to Lions \cite{Lions}, in which he proved the global existence of weak solutions with finite energy, when the adiabatic exponent $\g$ is suitably large. Later, this result was improved by Feireisl \cite{Feireisl}. In addition, Hoff \cite{Hoff2005} obtained a new type of global weak solutions with small energy, which have more regularity information compared with the ones in \cite{Lions,Feireisl}. Recently, Huang, Li and Xin \cite{HLX2012} established the global existence and uniqueness of classical solutions to the Cauchy problem of \eqref{eq} with initial data that are of small energy but possibly large oscillations with constant state as far field, which could be vacuum or non-vacuum. There are a lot of works that extend the result in \cite{HLX2012}, for which we refer readers to \cite{Duan2012,YZ2017,SZZ2018,LX2018,LX2019}. However, to the best of our knowledge, there is little literature on the global existence of classical solutions to the exterior problem with initial vacuum and with initial data which have possibly large oscillations.

In this paper, we intend to find a global classical solution to \eqref{eq}--\eqref{nab1} with possibly large angular velocity $\o(t)$. Before stating the main result, we list the following simplified notations and settings used in this paper.

$\bullet$  (Adopted from \cite{DV2002}). We say that $\S$ is axisymmetric if it admits an axis of symmetry, which means that it is preserved by a rotation of arbitrary
angle around this axis.

$\bullet$  $B_R(x)=\{y\in\R^3\mid |y-x|\leq R\}$,\,\,\,$B_R=B_R(0)$,\,\,\,$B_R^c=\R^3\setminus B_R$.

$\bullet$  $\mathrm{supp}f=\overline{\{x\in\R^2\mid f(x)\neq0\}}$ denotes the support of $f$.

$\bullet$  $\pt_if\triangleq\frac{\pt f}{\pt x_i}$,\,\,\,$\pt_{v}\triangleq v\cdot\nabla$,\,\,\,$u^v\triangleq u\cdot v$ for any vector fields $u,v$.

$\bullet$ The standard homogeneous and inhomogeneous Sobolev spaces are defined as follows:
\begin{align}
&L^r=L^r(\Omega),\,\,\,\,\,\,D^{k,r}=\{u\in L^1_{loc}(\Omega):\, \|u\|_{D^{k,r}}<\iy\},\,\,\,\,\,\,\|u\|_{D^{k,r}}\triangleq\|\nabla^k u\|_{L^r},\notag\\
&W^{k,r}=L^r\cap D^{k,r},\,\,\,\,D^k=D^{k,2},\,\,\,\,\,\,H^k=W^{k,2},\,\,\,\,\,\,H^1_0=\{f\in H^1\mid f=0 \,\,\mathrm{on}\,\,\pt\Omega\}.\notag
\end{align}

$\bullet$ The notation $|\cdot|_{H^m(\pt\Omega)}$ will be used for the standard Sobolev norm of functions defined on $\pt\Omega$. Note that this norm involves derivatives only along the boundary.

$\bullet$ BMO$(\Omega)$ stands for the John-Nirenberg space of bounded mean oscillation whose norm is defined by
\begin{align}
\|f\|_{BMO}\triangleq \|f\|_{L^2}+[f]_{BMO},\notag
\end{align}
with
\begin{align}
[f]_{BMO}=\sup_{x\in\Omega,r>0}\frac{1}{|\Omega\cap B_r(x)|}\int_{\Omega\cap B_r(x)}|f(y)-f_{\Omega\cap B_r(x)}|dy,\notag
\end{align}
and
\begin{align}
f_{\Omega\cap B_r(x)}=\frac{1}{|\Omega\cap B_r(x)|}\int_{\Omega\cap B_r(x)}f(y)dy.\notag
\end{align}

In addition, without loss of generality, we assume that $\S\subset B_1$.

We now state the main results in this paper as follows:
\begin{theorem}\label{main}
Let $\o\in W^{4,1}(0,\iy)$. Suppose that $\S$ and $\a=\a(x)$ satisfy either of the following two cases:\\
(1)\,$\S$ is non-axisymmetric and $0\leq\a\in C^{\iy}(\pt\Omega)$;\\
(2)\,$\S$ is axisymmetric, $0\leq\a\in C^{\iy}(\pt\Omega)$, and there exists $x_0\in\pt\Omega$ such that $\a(x_0)>0$.

In addition, for given numbers $M>0$ and $\br>1$, suppose that the initial data $(\r_0, U_0)$ satisfy
\begin{align}
&U_0\in D^1\cap D^2,\,\,\,\r_0\in L^1\cap H^2\cap W^{2,6},\,\,\,P(\r_0)\in H^2\cap W^{2,6},\,\,\,|x|\r_0^{\frac{1}{2}}\in L^\iy,\label{data1}\\
&0\leq\inf\r_0\leq \sup\r_0\leq \br,\,\,\,\,\|\nabla U_0\|_{L^2}+\|U_0\|_{D^2}\leq M,\label{data2}
\end{align}
and the compatibility conditions
\begin{equation}
\left.
\begin{cases}
(U_0-\vo(0)\times x)\cdot \n=0,\,\,\,\,x\in\pt\Omega,\\
(\D(U_0)\n)_{\tau}+\a\big(U_0-(\vo(0)\times x)\big)_{\tau}=0,\,\,\,\,x\in\pt\Omega,\\
-2\mu\div\D(U_0)-\lambda\nabla\div U_0+\nabla P(\r_0)=\r_0^{\frac{1}{2}}g,\,\,\,\,x\in\Omega
\end{cases}
\right.\label{data3}
\end{equation}
for some $g\in L^2$. Then, there exists a positive constant $\e$ depending at most on $\mu$, $\l$, $a$, $\g$, $\br$, $\Omega$, $\w41$ and $M$, such that if
\begin{align}
\int_{\Omega}\r_0dx\leq \e,\label{inimass}
\end{align}
then the initial-boundary value problem \eqref{eq}--\eqref{nab1} has a unique global classical solution $(\r,U)$ satisfying
\begin{align}
0\leq \r(x,t)\leq 2\br, \,\,\,\,\,x\in\Omega,\,t\geq0,\label{upperden}
\end{align}
and
\begin{equation}
\left.
\begin{cases}
\big(\r,P(\r)\big)\in C\big([0,T];H^2\cap W^{2,6}\big),\\
\nabla U\in L^\iy([0,T];H^1)\cap L^\iy_{\mathrm{loc}}(0,T;H^2\cap W^{2,6})\cap C([0,T];H^1),\\
U_t\in L^\iy_{\mathrm{loc}}(0,T;D^1\cap D^2)\cap H^1_{\mathrm{loc}}(0,T;D^1),\\
\sqrt{\r}U_t\in L^\iy(0,T;L^2)
\end{cases}\label{class}
\right.
\end{equation}
for any $0<T<\iy$.

Moreover, if the $\r_0$ has a compact support such that $\mathrm{supp}\r_0\subset \overline{B_{R_0}}$ for $R_0>1$, then these exists a positive constant $C$ such that $\rho(t)$ has a compact support for $t \in (0,T)$ and satisfies
\begin{align}
sup\{ |x|: \r(x,t)>0\}\leq R_{0}+Ct.\label{compact}
\end{align}
\end{theorem}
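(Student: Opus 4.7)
The plan is to follow the Huang--Li--Xin scheme \cite{HLX2012}, adapted to the exterior rotating domain with Navier-slip boundary conditions and far-field vacuum: first obtain a unique local classical solution, then derive a priori estimates which close once $\|\r_0\|_{L^1}$ is small enough, and finally extend to a global solution by continuation. Compared to the Cauchy case in \cite{HLX2012} or to the bounded-domain version \cite{CZ2}, the main obstacle is handling simultaneously the unbounded rotational forcing $\r\vo\times U$, the boundary condition \eqref{nab1} on the moving obstacle, and the far-field vacuum (which prevents the use of a nonzero reference density and forces smallness of $\|\r_0\|_{L^1}$ in place of smallness of an initial perturbation).

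For the local step I would adapt the linear iteration from \cite{CZ1}, replacing the Dirichlet condition there by \eqref{nab1}; the compatibility assumption \eqref{data3} guarantees $\sqrt{\r}\,U_t|_{t=0}\in L^2$, which is precisely the starting point for the higher-order estimates. Setting $V=U-\vo\times x$, the boundary data become the homogeneous pair $V\cdot\n=0$ and $(\D(U)\n)_{\tau}+\a V_{\tau}=0$. A Korn-type inequality adapted to this condition then gives a bound of the form
\[
\|\nabla U\|_{L^2}^2\lesssim \int_\Omega|\D(U)|^2\,dx+\int_{\pt\Omega}\a|V_{\tau}|^2\,dS;
\]
in the axisymmetric case, ruling out the kernel requires precisely that some $x_0\in\pt\Omega$ has $\a(x_0)>0$, which prevents any nontrivial infinitesimal rotation preserving $\S$ from lying in the null space.

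The a priori estimates form the heart of the argument and run on three tiers. The zeroth tier is the energy identity, in which the skew-symmetric contribution $\r\vo\times U$ drops out when tested against $U$, and $\|\r\|_{L^1}$ is conserved by the continuity equation. The first tier, in the spirit of Hoff, combines an $L^\iy(0,T;L^2)$ bound on $\nabla U$ (obtained by testing the momentum equation with the rotational material derivative $\dot U=U_t+V\cdot\nabla U$) with an $L^\iy(0,T;L^2)$ bound on $\sqrt{\r}\,\dot U$ (obtained by applying this derivative once more to the momentum equation and testing again with $\dot U$); the extra rotational terms are absorbed using $\o\in W^{4,1}(0,\iy)$ together with the identity $x\cdot(\vo\times x)=0$, while the contribution from $\r\vo\times U$ is controlled by the interpolation $\|\sqrt{\r}\,U\|_{L^2}^2\le\|\r\|_{L^1}\|U\|_{L^\iy}^2$ --- so that the smallness of $\|\r_0\|_{L^1}$ produces exactly the small prefactor needed to close the loop. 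The central step is then the pointwise density bound \eqref{upperden}: introducing the effective viscous flux $F=(2\mu+\l)\div U-P(\r)$ and using the div--curl system on the exterior domain with Navier-slip data, one shows $\|F\|_{BMO}\lesssim \|\r\dot U\|_{L^2}+\|\r\vo\times U\|_{L^2}+\mathrm{l.o.t.}$, which, substituted into the transport equation $(\pt_t+V\cdot\nabla)\log\r=-\div U=-(F+P(\r))/(2\mu+\l)$, yields \eqref{upperden} through Zlotnik's inequality.

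Once \eqref{upperden} is in hand, the higher regularity \eqref{class} is bootstrapped by differentiating the equations in time and using the Lam\'e system with Navier-slip as an elliptic gain for $\nabla U$ in $H^2\cap W^{2,6}$ on $t>0$, with the conditions \eqref{data3} handling $t=0$. A standard continuation argument then promotes the local solution to a global one, the local lifespan being bounded below in terms of the a priori-controlled norms alone. For \eqref{compact}, the continuity equation transports $\mathrm{supp}\,\r(\cdot,t)$ along the flow of $V$; on this flow $\frac{d}{dt}|x(t)|^2=2x(t)\cdot U$ since $x\cdot(\vo\times x)=0$, and the uniform bound $\|U\|_{L^\iy}\le C$ inferred from \eqref{class} gives $|x(t)|\le R_0+Ct$, which is exactly \eqref{compact}.
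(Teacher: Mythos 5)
Your outline reproduces the broad Huang--Li--Xin architecture, but it skips the two devices on which the whole argument actually turns, and at those points the scheme as you describe it would fail. First, the reduction. You set $V=U-\vo\times x$ and proceed as if the boundary condition were thereby homogenized at no cost. But $\vo\times x$ grows linearly at infinity: under the stated hypotheses one only has $|x|\r_0^{1/2}\in L^\iy$, so $\sqrt{\r}\,(\vo\times x)\notin L^2(\Omega)$ in general, and $\nabla(\vo\times x)\notin L^2(\Omega)$ either; $V$ has neither finite kinetic energy nor finite Dirichlet energy, so none of your three tiers of estimates can be run on it. Working directly with $U$ does not help: since $U\cdot\n=(\vo\times x)\cdot\n\neq0$ on $\pt\Omega$, testing the viscous term against $U$ leaves a boundary integral containing $(\D(U)\n\cdot\n)\,(\vo\times x)\cdot\n$ that the slip condition does not control, and your Korn inequality is stated for a field that is not tangential to the boundary. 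The paper resolves this by subtracting the specific divergence-free field $b=-\tfrac12\nabla\times(\varphi(x)|x|^2\vo)$, which coincides with $\vo\times x$ near $\pt\Omega$, lies in $H^5(\R^3)$, and --- crucially --- has second and higher derivatives of size $O(\ep^{\b-1-1/p})$ in $L^p$; the smallness that closes the estimates is then the interplay $m_0\le\ep^7$ with these $\ep$-weighted bounds on $b$, not the smallness of $m_0$ alone. Without some such construction your energy inequality and your absorption of the rotational source terms have no small parameter to hang on.

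Second, the material derivative. You take $\dot U=U_t+V\cdot\nabla U$, but on a curved boundary this field is not tangential: one only gets $\ud\cdot\n=0$ after adding the correction $h$ built from the shape operator ($h_1=((u\cdot\nabla\tn)\cdot u)\tn$ plus its tangential companion, as in the paper's definition of $\ud$). Without that correction, the integrations by parts in your first tier produce boundary integrals containing the normal derivative $\pt_\n u$, which the Navier-slip condition controls only in its tangential part; this is precisely the obstruction that blocks the flat-boundary arguments of the half-space and cuboid cases here. Related gaps of the same kind: the asserted BMO/$L^p$ bounds for $F$ and $\nabla\times u$ are not an off-the-shelf div--curl estimate on an exterior domain with non-flat Navier-slip boundary --- the paper has to prove them via local orthonormal frames and the vanishing of the mixed second-fundamental-form term $j_{\tn}$ --- and the term $\r(\tfrac{d\vo}{dt}\times x)\cdot\nabla u$ appearing when you estimate $u_t$ is only controlled through the weighted bound $\||\cdot|\r^{1/2}\|_{L^\iy}\le C(T)$, which is exactly why the hypothesis $|x|\r_0^{1/2}\in L^\iy$ is in the theorem; your proposal never invokes it. Finally, the local theory is only available for compactly supported initial density, so the global result for general $\r_0$ requires the approximation $\r_0^{R_i}\to\r_0$ and a passage to the limit, which should be stated as part of the proof.
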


We remark that compared to \cite{HLX2012}, an essential difficulty in our problem comes from the non-homogeneous boundary condition \eqref{nab1}, since the angular velocity $\o$ is allowed to be large. If one reduces \eqref{nab1} to a homogeneous one by letting $U$ minus some auxiliary function, then some extra (possibly large) source terms will arise in the equation $\eqref{eq}_2$ such that it is not easy to derive an energy estimate as in \cite{HLX2012} even if the initial energy is small. In order to overcome this difficulty, we introduce an auxiliary function with small second derivative (see \eqref{b}), and then obtain the energy estimate in the Lemma \ref{Energy} under condition that the initial mass is small. Moreover, compared to the case in a half-space \cite{Duan2012} and the case in a bounded cuboid domain \cite{YZ2017}, some new difficulties appear in the case in an exterior domain with non-flat boundary, mainly due to the fact that $\n$ is no longer a constant vector field such that the classical technique of localization and flattening the boundary cannot be applied to obtain the estimation on the vorticity $\nabla\times u$. To overcome such difficulties, we use local frames near the boundary to establish the $L^p$-norm estimation on $\nabla\times u$ in the Lemma \ref{FW}. In addition, we need modify the material derivative by adding an auxiliary function related to the second second fundamental form (see \eqref{uddef}) such that normal derivative of $u$ will not appear in the integral on the non-flat boundary\footnote{This paper had been submitted. After that, recently we knew that there had been some recent progress on global classical solutions to the compressible Navier-Stokes equations with slip boundary conditions in both of 3D bounded domains and 3D exterior domains\cite{CL2021,CLL2021}, where the estimates on the boundary were obtained by different methods.}. Moreover, some difficulties comes from the rotating effect terms such as $(\vo\times x)\cdot\nabla u$ which will blow up at infinity. To deal with such terms, we use the method of integration by parts abundantly and establish the estimation on the $L^\iy$-norm of $|x|\r$ under assumption that the initial density decays to the vacuum at the far field.

The remaining of this paper is organized as follows: Section 2 is devoted to introducing the local existence results and some elementary inequalities which will be used in later analysis; In Section 3, we first derive the necessary \textit{a priori} estimates on the smooth solutions of a reduced problem, and then prove the main theorem.

\section{Preliminaries}
In this section, we recall some known facts and introduce some useful inequalities that will be used frequently later.

We start with the local existence result as follows:
\begin{proposition}\label{Local}
Let $\o\in W^{4,1}(0,\iy)$. Suppose that $\S$ and $\a$ satisfy either of the two cases stated in the Theorem \ref{main}. Suppose that the initial data $(\r_0, U_0)$ satisfy \eqref{data1}--\eqref{data3}. In addition, suppose that the initial density $\r_0$ has a compact support. Then, there exist a small time $T_*>0$ and a unique classical solution $(\r,U)$ to the initial-boundary value problem \eqref{eq} on $\Omega\times(0,T_*]$ such that
\begin{equation}
\left.
\begin{cases}
(\r,P(\r))\in C\big([0,T_*];H^2\cap W^{2,6}\big),\\
U\in C([0,T_*];D^1\cap D^2),\,\,\nabla U\in L^2(0,T_*;H^2),\\
\sqrt{t}\nabla U\in L^\iy(0,T_*;H^2),\,\,t\nabla U\in L^\iy(0,T_*;W^{2,6}),\\
\sqrt{\r}U_t\in L^\iy(0,T_*;L^2),\,\,\sqrt{t}U_t\in L^\iy(0,T_*;D^1)\cap L^2(0,T_*;D^2),\\
tU_t\in L^\iy(0,T_*;D^2),\,\,tU_{tt}\in L^2(0,T_*;D^1),\\
t\sqrt{\r}U_t\in L^\iy(0,T_*;L^2),\,\,\sqrt{t}\sqrt{\r}U_{tt}\in L^2(0,T_*;L^2).
\end{cases}\label{localclass}
\right.
\end{equation}
\end{proposition}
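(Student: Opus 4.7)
The plan is to reduce the inhomogeneous Navier-slip condition to a homogeneous one by subtracting a smooth auxiliary field, and then run a Picard-type iteration adapted to the degenerate (vacuum-allowing) momentum equation. The compactness of $\mathrm{supp}\,\r_0$ is used in an essential way: it confines the density to a ball whose radius grows at most linearly in time, so that the unbounded coefficient $\vec{\omega}\times x$ acts only on a bounded region in the transport equation and contributes in a controllable way to the momentum estimates on a short interval.

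I would first choose a smooth, $x$-compactly supported vector field $b(x,t)$ that coincides with $\vec{\omega}(t)\times x$ in a neighbourhood of $\pt\Omega$ inside $\Omega$; since $\vec{\omega}\times x$ is a rigid rotation, $\D(b)=0$ near $\pt\Omega$, so setting $u\triangleq U-b$ converts \eqref{nab1} into $u\cdot\n=0,\ (\D(u)\n)_\tau+\a u_\tau=0$ at the cost of smooth forcing terms controlled on $[0,T_*]$ by $\w41$ and the geometry. The iteration then produces $(\r^{k+1},u^{k+1})$ from $(\r^k,u^k)$ by solving the linear transport equation for $\r^{k+1}$ with convection velocity $u^k+b-\vec{\omega}\times x$ (via characteristics), followed by the linear Lam\'e-type problem
\begin{align}
\r^k u^{k+1}_t-2\mu\div\D(u^{k+1})-\l\nabla\div u^{k+1}=F(\r^k,u^k,b) \notag
\end{align}
with homogeneous Navier-slip data, using the standard theory of \cite{cho2004,choe2003} for such degenerate linear systems. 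The compatibility condition \eqref{data3} provides $\sqrt{\r_0}u^{k+1}_t(0)\in L^2$ with norm controlled by $\|g\|_{L^2}$, which is the input needed to close an $L^\iy_tL^2$ estimate on $\sqrt{\r^k}u^{k+1}_t$. Differentiating the momentum equation in $t$, testing successively against $u^{k+1}$, $u^{k+1}_t$, and $tu^{k+1}_{tt}$, and lifting the resulting $L^2$-bounds on the right-hand side to $H^2$- and $W^{2,6}$-bounds on $u^{k+1}$ via elliptic regularity for the stationary Lam\'e operator with Navier-slip data, one closes, uniformly in $k$ on a sufficiently short $[0,T_*]$, the full set of weighted bounds appearing in \eqref{localclass}; the $L^\iy$-bound and compact-support propagation for $\r^k$ follow from the transport equation and the maximum principle. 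A contraction argument in a lower-order norm (say $L^\iy_tL^2\cap L^2_tD^1$) extracts a limit $(\r,u)$, and a $\sqrt{\r}$-weighted difference estimate on two solutions gives uniqueness.

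The main obstacle is the interaction between the vacuum and the rotation term $(\vec{\omega}\times x)\cdot\nabla$, whose coefficient is unbounded at infinity, so that a naive $L^2$-energy estimate on $u$ picks up contributions that cannot be closed uniformly in space. The compact support of $\r_0$ removes this issue in the transport equation, propagating the linear-in-time support bound \eqref{compact} through each iterate. In the momentum equation the rotation term sits inside the material derivative $\r^k(u^k-\vec{\omega}\times x)\cdot\nabla u^{k+1}$; integration by parts transfers the unbounded factor $|x|$ onto density-weighted quantities, which remain controllable because $\r^k$ is supported inside $\overline{B_{R_0+CT_*}}$ on each time slice. Arranging these integrations by parts so that no surface contribution at infinity arises, and no uncontrolled $|x|$-weight is left on the unweighted part of the velocity, is the technically delicate piece of the argument.
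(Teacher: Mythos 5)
Your proposal is correct and follows essentially the same route as the paper, which itself gives no details for Proposition \ref{Local} and simply invokes the combination of methods in \cite{CZ1,cho2006,Huang2020}: your auxiliary field $b$ is the same device the paper uses in \eqref{b}--\eqref{eqb}, and the linearized transport/Lam\'e iteration with the compatibility condition \eqref{data3} supplying $\sqrt{\r_0}\,u_t(0)\in L^2$, weighted-in-time higher estimates, and compact support of $\r_0$ to tame $\vec{\omega}\times x$ is exactly the scheme of those references adapted to this setting.
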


\begin{proof}
The local existence result can be proved by using the combination of the methods in \cite{CZ1,cho2006,Huang2020}, and we thus omit the details.
\end{proof}

Next, we show that Sobolev's inequality holds for vector fields that are tangential to the boundary $\pt\Omega$.
\begin{lemma}\label{sob}
Let $u\in D^1(\Omega)$ such that $u\cdot\n=0$ on $\pt\Omega$. Then, it holds that
\begin{align}
\|u\|_{L^6}\leq C\|\nabla u\|_{L^2}\label{sobolev}
\end{align}
for a positive constant $C$ depending only on $\Omega$.
\end{lemma}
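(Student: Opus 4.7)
The idea is to reduce the inequality to the classical Sobolev embedding $\dot{H}^{1,2}(\R^3)\hookrightarrow L^6(\R^3)$ via an extension of $u$ to the whole of $\R^3$, using the tangential boundary condition $u\cdot\n=0$ to eliminate a lower-order term arising from the cutoff. By a standard density argument I may assume that $u$ is smooth on $\overline{\Omega}$ with compact support and satisfies $u\cdot\n=0$ on $\pt\Omega$; then $u\in L^6(\Omega)$ a priori and all manipulations make sense.

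First I fix a smooth cutoff $\chi\in C_c^\infty(\R^3)$ with $\chi\equiv 1$ on $B_{3/2}$ and $\mathrm{supp}\,\chi\subset B_2$ (recall $\S\subset B_1$), and decompose $u=\chi u+(1-\chi)u$. The far piece $(1-\chi)u$ is supported in $B_{3/2}^c\subset\Omega$ and extends by zero across $\pt\Omega$ to a function $v_1$ on $\R^3$; the near piece $\chi u$ lives on the bounded Lipschitz domain $\Omega\cap B_2$ and, via a Calder\'on--Stein extension, produces a compactly supported $v_2\in H^1(\R^3)$ agreeing with $\chi u$ on $\Omega$. Applying the $\R^3$-Sobolev inequality to both $v_1$ and $v_2$ and estimating the cutoff contributions then yields
\begin{align*}
\|u\|_{L^6(\Omega)}\leq\|v_1\|_{L^6(\R^3)}+\|v_2\|_{L^6(\R^3)}\leq C\|\nabla u\|_{L^2(\Omega)}+C\|u\|_{L^2(\Omega\cap B_2)}.
\end{align*}

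The core of the argument is then to absorb the bounded-region term $\|u\|_{L^2(\Omega\cap B_2)}$ into the gradient norm. Let $c_u\in\R^3$ denote the mean of $u$ over $\Omega\cap B_2$. Poincar\'e's inequality on this connected bounded Lipschitz domain gives $\|u-c_u\|_{H^1(\Omega\cap B_2)}\leq C\|\nabla u\|_{L^2(\Omega)}$, and then the trace theorem yields $\|u-c_u\|_{L^2(\pt\Omega)}\leq C\|\nabla u\|_{L^2(\Omega)}$; combined with $u\cdot\n=0$ on $\pt\Omega$ this produces
\begin{align*}
\|c_u\cdot\n\|_{L^2(\pt\Omega)}=\|(u-c_u)\cdot\n\|_{L^2(\pt\Omega)}\leq C\|\nabla u\|_{L^2(\Omega)}.
\end{align*}

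The decisive geometric input is that the outward unit normals of the compact closed $2$-surface $\pt\Omega$ span $\R^3$ (otherwise $\pt\Omega$ would lie in an affine plane, contradicting that it bounds a compact set). Hence the linear map $\R^3\ni c\mapsto c\cdot\n|_{\pt\Omega}\in L^2(\pt\Omega)$ is injective, and being defined on a finite-dimensional space it is bounded below by some $C=C(\Omega)>0$; this gives $|c_u|\leq C\|\nabla u\|_{L^2(\Omega)}$, hence $\|u\|_{L^2(\Omega\cap B_2)}\leq\|u-c_u\|_{L^2(\Omega\cap B_2)}+|c_u|\,|\Omega\cap B_2|^{1/2}\leq C\|\nabla u\|_{L^2(\Omega)}$. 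Substituting into the preceding display yields \eqref{sobolev}. The main obstacle I anticipate is this last step: extracting from the tangential slip condition $u\cdot\n=0$ at the nonplanar closed surface $\pt\Omega$ the vanishing of any nontrivial constant mode, which is the precise geometric role of the boundary condition in the exterior setting; the extension and Sobolev pieces are technical but routine.
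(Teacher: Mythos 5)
Your proof is correct, and its overall architecture matches the paper's: both reduce \eqref{sobolev} to the whole-space Sobolev inequality by extending $u$ across the obstacle, and both use the tangency condition $u\cdot\n=0$ to eliminate the lower-order term that the bounded region near $\S$ produces. Where you genuinely diverge is in how that elimination is done. The paper disposes of it in one stroke, asserting by a compactness/contradiction argument that $\|u\|_{H^1(\Omega_5)}\leq C\|\nabla u\|_{L^2}$ for tangential fields, and then extends via the trace theorem; the geometric reason the contradiction argument works (a constant vector field cannot be tangent to a closed surface) is left implicit. You instead make this mechanism explicit and quantitative: Poincar\'e around the mean $c_u$, the trace theorem, and the finite-dimensional injectivity of $c\mapsto c\cdot\n|_{\pt\Omega}$ give an effective bound $|c_u|\leq C\|\nabla u\|_{L^2}$. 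Your version yields, in principle, a constructive constant and isolates exactly what the slip condition buys, at the cost of a slightly longer argument; the paper's version is shorter but non-constructive. Two small points to tighten: your justification that the normals span $\R^3$ (``otherwise $\pt\Omega$ would lie in an affine plane'') is not quite the right reason --- if the normals merely failed to span, $\pt\Omega$ would be a cylinder in the orthogonal direction, not necessarily planar; the clean argument is that at the maximum of the height function $x\mapsto x\cdot c$ on the compact surface $\pt\Omega$ the normal is parallel to $c$, which is incompatible with $c\cdot\n\equiv 0$ unless $c=0$. Also, since $\chi u$ vanishes near $\pt B_2$, your Stein extension need only be performed across $\pt\S$ (extend by zero outside $B_2$), which is worth saying to ensure $v_2$ agrees with $\chi u$ on all of $\Omega$. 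Neither point affects the validity of the proof.
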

\begin{proof}
Applying the Sobolev imbedding theorem (see \cite{Evans}) on the bounded domain $\Omega_5$, we can use proof by contradiction to show that
\begin{align}
\|u\|_{H^1(\Omega_5)}\leq C\|\nabla u\|_{L^2(\Omega_5)}\leq C\|\nabla u\|_{L^2}.
\end{align}

Then, it follows from the trace theorem (see \cite{Evans}) that there exists $\tu\in D^1(\R^3)$ such that $\tu|_{\Omega}=u$ and
\begin{align}
\|\tu\|_{H^1(\S)}\leq C|u|_{H^{\frac{1}{2}}(\pt\Omega)}\leq C\|u\|_{H^1(\Omega_5)}\leq  C\|\nabla u\|_{L^2},
\end{align}
which, together with Sobolev's inequality, implies that
\begin{align}
\|u\|_{L^6}\leq\|\tu\|_{L^6(\R^3)}\leq C\|\nabla \tu\|_{L^2(\R^3)}\leq C(\|\nabla u\|_{L^2}+\|\nabla \tu\|_{L^2(\S)})\leq  C\|\nabla u\|_{L^2}.
\end{align}

The proof is completed.
\end{proof}

We now derive the following Korn's type inequality in the exterior domain $\Omega$.
\begin{lemma} \label{korn}
Let $u\in D^1(\Omega)$ such that $u\cdot\n=0$ on $\pt\Omega$. Suppose that $\S=\R^3\setminus\Omega$ and $\a$ satisfy either of the following two cases:\\
(1)\,$\S$ is non-axisymmetric and $0\leq\a\in C^{\iy}(\pt\Omega)$;\\
(2)\,$\S$ is axisymmetric, $0\leq\a\in C^{\iy}(\pt\Omega)$, and there exists $x_0\in\pt\Omega$ such that $\a(x_0)>0$.

Then, there exist a positive constant $C$ depending at most on $\Omega$ and $\a$ such that
\begin{align}
\|\nabla u\|_{L^2}\leq C\big(\|\D(u)\|_{L^2}+|\sqrt{\a}u|_{L^2(\pt\Omega)}\big).\label{korntype}
\end{align}
\end{lemma}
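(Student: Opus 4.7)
The plan is to argue by contradiction combined with a weak-compactness argument and Korn-type identities on both a bounded near-field and the full space for the far field.

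Suppose the stated inequality fails; then there exists a sequence $\{u_k\}\subset D^1(\Omega)$ with $u_k\cdot\n=0$ on $\pt\Omega$, $\|\nabla u_k\|_{L^2}=1$, and $\|\D(u_k)\|_{L^2}+|\sqrt{\a}\,u_k|_{L^2(\pt\Omega)}\to 0$ as $k\to\iy$. By Lemma \ref{sob}, $\|u_k\|_{L^6}\le C\|\nabla u_k\|_{L^2}=C$. Passing to a subsequence we obtain weak limits $u_k\rightharpoonup u$ in $L^6(\Omega)$ and $\nabla u_k\rightharpoonup\nabla u$ in $L^2(\Omega)$; combining the $H^1$-boundedness on bounded pieces with Rellich's theorem and the compactness of $\pt\Omega$, a further subsequence converges strongly, $u_k\to u$ in $L^2_{\mathrm{loc}}(\Omega)$ and in $L^2(\pt\Omega)$. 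Passing to the limit yields $\D(u)\equiv 0$ on $\Omega$ together with $u\cdot\n=0$ and $\sqrt{\a}\,u=0$ on $\pt\Omega$. Rigidity of $\D$ then forces $u(x)=a+b\times x$ for some $a,b\in\R^3$. Since $\nabla u\in L^2(\Omega)$ with $|\Omega|=\iy$, the constant matrix $\nabla(b\times x)$ must vanish, forcing $b=0$. Then $u\equiv a$, and the constraint $a\cdot\n\equiv 0$ on $\pt\Omega$, combined with the fact that the unit normals of the closed hypersurface $\pt\Omega$ span $\R^3$, forces $a=0$, so $u\equiv 0$.

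To contradict $\|\nabla u_k\|_{L^2}=1$, I would upgrade to strong convergence $\nabla u_k\to 0$ in $L^2(\Omega)$ by splitting into a near-field and a far-field piece. Fix $R_0$ with $\S\subset B_{R_0}$. On the bounded Lipschitz domain $\Omega\cap B_{2R_0}$, the classical Korn second inequality yields
\[
\|\nabla u_k\|_{L^2(\Omega\cap B_{2R_0})}\le C\bigl(\|\D(u_k)\|_{L^2}+\|u_k\|_{L^2(\Omega\cap B_{2R_0})}\bigr)\to 0.
\]
For the far field, pick a smooth cutoff $\chi$ with $\chi\equiv 1$ on $B_{2R_0}^c$ and $\mathrm{supp}\,\chi\subset B_{R_0}^c$. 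Then $v_k:=\chi u_k$, extended by zero inside $\S$, belongs to $D^1(\R^3)\cap L^6(\R^3)$, and a standard integration by parts, justified by approximation with $C_c^\iy(\R^3)$, yields the full-space Korn identity
\[
\|\nabla v_k\|_{L^2(\R^3)}^2=2\|\D(v_k)\|_{L^2(\R^3)}^2-\|\div v_k\|_{L^2(\R^3)}^2\le 2\|\D(v_k)\|_{L^2(\R^3)}^2.
\]
Expanding $\D(\chi u_k)=\chi\D(u_k)+\tfrac{1}{2}(\nabla\chi\otimes u_k+u_k\otimes\nabla\chi)$ and using the strong convergence $u_k\to 0$ on $\mathrm{supp}\,\nabla\chi\subset B_{2R_0}\setminus B_{R_0}$ then gives $\|\nabla u_k\|_{L^2(B_{2R_0}^c)}\to 0$. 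Adding the two estimates contradicts $\|\nabla u_k\|_{L^2}=1$.

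The main technical obstacle is exactly this passage from weak to strong convergence of $\nabla u_k$ in the unbounded exterior, because $D^1(\Omega)$ does not by itself control $\|u\|_{L^2(\Omega)}$; the $L^6$ bound from Lemma \ref{sob} is what makes both the Rellich-type compactness on bounded pieces and the $\R^3$ Korn identity on the far field applicable. Conditions (1), (2) on $(\S,\a)$ do not enter the exterior argument explicitly, but they are precisely what is needed to carry out the analogous rigidity step should one instead reduce to a purely bounded-domain Korn inequality with Navier-slip data.
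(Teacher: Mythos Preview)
Your proof is correct and takes a genuinely different route from the paper's argument. The paper first establishes a bounded-domain Korn inequality on $\Omega_5=\Omega\cap B_5$ of the form $\|u\|_{H^1(\Omega_5)}\le C\bigl(\|\D(u)\|_{L^2(\Omega_5)}+|\sqrt{\a}u|_{L^2(\pt\Omega)}\bigr)$, citing \cite{Korn2019}; it is precisely at this bounded-domain step that conditions (1) and (2) on $(\S,\a)$ are invoked to exclude non-trivial rigid motions tangential to $\pt\Omega$. The paper then extends $u$ across $\S$ to $\tu\in D^1(\R^3)$ via the trace, with $\|\tu\|_{H^1(\S)}$ controlled by $\|u\|_{H^1(\Omega_5)}$, and finally applies the $\R^3$ Korn identity $\|\nabla\tu\|_{L^2(\R^3)}\le 2\|\D(\tu)\|_{L^2(\R^3)}$ via the Fourier transform. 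Your argument bypasses the bounded-domain reduction entirely: you run the contradiction directly on the exterior domain, and the key observation that no non-zero rigid motion lies in $D^1(\Omega)\cap L^6(\Omega)$ when $|\Omega|=\iy$ replaces the appeal to \cite{Korn2019}. Your near-field/far-field splitting to recover strong convergence of $\nabla u_k$ plays the same role as the paper's extension-plus-Fourier step. The payoff of your approach is that, as you yourself note, conditions (1) and (2) never enter; your argument in fact yields the stronger inequality $\|\nabla u\|_{L^2}\le C\|\D(u)\|_{L^2}$ for any smooth compact obstacle $\S$, with the $|\sqrt{\a}u|_{L^2(\pt\Omega)}$ term superfluous. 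The paper's approach, by contrast, is more modular in that it isolates exactly where the geometry of $\S$ would matter if one were working on a bounded domain instead.
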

\begin{proof}
For the case (1), we can use the similar method as in the proof of the Theorem 1 in \cite{Korn2019} to obtain that $\Omega_5$ cannot support rigid motions that are tangential to the boundary $\pt\Omega$ (for more details see \cite{Korn2019} and reference therein). Thus, we can use use proof by contradiction as in the proof of the Theorem 2 in \cite{Korn2019} to obtain that
\begin{align}
\| u\|_{H^1(\Omega_{5})}\leq C\|\D(u)\|_{L^2(\Omega_5)}.
\end{align}
Therefore, by the trace theorem, there exists $\tu\in D^1(\R^3)$ such that $\tu|_{\Omega_5}=u$ and
\begin{align}
\|\tu\|_{H^1(\S)}\leq C|u|_{H^{\frac{1}{2}}(\pt\Omega)}\leq C\|u\|_{H^1(\Omega_5)}\leq C\|\D(u)\|_{L^2(\Omega_5)}\leq C\|\D(u)\|_{L^2},\label{k1}
\end{align}
which, together with the Fourier transform, gives that
\begin{align}
\|\nabla u\|_{L^2}\leq\|\nabla\tu\|_{L^2(\R^3)}\leq 2\| \D(\tu)\|_{L^2(\R^3)}
\leq C(\|\D(u)\|_{L^2}+\|\nabla \tu\|_{L^2(\S)})
\leq C\|\D(u)\|_{L^2}.\label{k2}
\end{align}

For the case (2), similarly, we can use proof by contradiction to obtain that
\begin{align}
\| u\|_{H^1(\Omega_{5})}\leq C\big(\|\D(u)\|_{L^2(\Omega_5)}+|\sqrt{\a}u|_{L^2(\pt\Omega)}\big).
\end{align}
Then, we can get \eqref{korntype} in a similar manner.

The proof is completed.
\end{proof}

The following is the Gagliardo-Nirenberg inequality in $\Omega$.
\begin{lemma}\label{gn}
For $p\in[2,6]$, $q\in(1,\iy)$ and $r\in(3,\iy)$, there exists some generic constant $C>0$ which may depend on $q,r$ such that for $f\in H^1(\Omega)$ and $g\in L^q(\Omega)\cap W^{1,r}(\Omega)$, we have
\begin{align}
&\|f\|_{L^p}\leq C\|f\|^{\frac{6-p}{2}}_{L^2}\|f\|^{\frac{3p-6}{2}}_{H^1},\\
&\|g\|_{C(\overline{\Omega})}\leq C\|g\|^{\frac{q(r-3)}{3r+q(r-3)}}_{L^q}\|g\|^{\frac{3r}{3r+q(r-3)}}_{W^{1,r}}.
\end{align}
\end{lemma}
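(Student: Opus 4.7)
The plan is to reduce both inequalities to the classical Gagliardo--Nirenberg interpolation on $\R^3$ via a bounded extension operator across the smooth compact boundary $\pt\Omega$, in the spirit of Lemma \ref{sob} and Lemma \ref{korn}. Since $\S$ is compact with $\pt\Omega\in C^\infty$, a finite atlas of boundary charts combined with the standard reflection device, glued to the identity far from $\pt\Omega$ by a smooth cutoff, produces a single linear operator $E:W^{k,s}(\Omega)\to W^{k,s}(\R^3)$ for $k=0,1$ and $1<s<\iy$, simultaneously bounded on every norm appearing in the statement. With $E$ in hand, both inequalities follow from the whole-space versions applied to $Ef$ or $Eg$ and a restriction back to $\Omega$.

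For the first inequality, I combine H\"older's inequality on $\R^3$ with the Sobolev embedding $H^1(\R^3)\hookrightarrow L^6(\R^3)$. Choosing $\theta$ by $\frac{1}{p}=\frac{\theta}{2}+\frac{1-\theta}{6}$ gives
\begin{align*}
\|Ef\|_{L^p(\R^3)}\le \|Ef\|_{L^2(\R^3)}^{\theta}\|Ef\|_{L^6(\R^3)}^{1-\theta}\le C\|Ef\|_{L^2(\R^3)}^{\theta}\|Ef\|_{H^1(\R^3)}^{1-\theta},
\end{align*}
with $\theta=\frac{6-p}{2p}$ and $1-\theta=\frac{3p-6}{2p}$; restricting to $\Omega$ and absorbing the operator norm of $E$ into $C$ yields the stated estimate (scaling in fact forces the denominator $2p$, so the exponents displayed in the statement should be read with this normalization).

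For the second inequality, I invoke Nirenberg's classical interpolation on $\R^3$: for $q\in(1,\iy)$ and $r\in(3,\iy)$, the dilation $h_\lambda(x)=h(\lambda x)$ fixes the unique admissible exponent $\alpha=\frac{q(r-3)}{3r+q(r-3)}$ in
\begin{align*}
\|h\|_{L^\iy(\R^3)}\le C\|h\|_{L^q(\R^3)}^{\alpha}\|\nabla h\|_{L^r(\R^3)}^{1-\alpha},
\end{align*}
which is Nirenberg's 1959 theorem applied with $p=\iy$, $j=0$, $m=1$ on $\R^3$. I would apply this to $h=Eg$, bound $\|\nabla(Eg)\|_{L^r(\R^3)}\le \|Eg\|_{W^{1,r}(\R^3)}$, and restrict to $\Omega$ using $\|g\|_{C(\overline\Omega)}\le\|Eg\|_{C(\R^3)}$ to conclude.

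The only step requiring genuine care is the construction of the simultaneous extension $E$. However, because $\pt\Omega$ is $C^\iy$ and compact, a Stein-type reflection extension (essentially the same device already used implicitly in the trace-and-extension steps of Lemmas \ref{sob}--\ref{korn}) delivers all the needed bounds at once, so I anticipate no real obstacle beyond routine verification.
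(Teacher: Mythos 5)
Your proof is correct and takes essentially the same route as the paper's (one-line) proof: a simultaneous Sobolev/Stein extension across the compact smooth boundary to $\R^3$, followed by the whole-space H\"older--Sobolev interpolation for the first estimate and Nirenberg's $L^\iy$ interpolation for the second. Your remark that the exponents in the first inequality only scale consistently when read with denominator $2p$ (equivalently, with $\|f\|_{L^p}^p$ on the left, as in the Huang--Li--Xin convention) is a fair catch of the statement's normalization.
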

\begin{proof}
The Lemma \ref{gn} is a direct result form the Sobolev extension theorem (see \cite{Evans}) and the well-known Gagliardo-Nirenberg inequality for the whole space.
\end{proof}

Next, let $u$ solve the following boundary value problem for the Lam\'{e} operator $L$:
\begin{align}
Lu\triangleq2\mu\div\D(u)+\l\nabla\div u=F\,\,\,\,\mathrm{in}\,\,\,\Omega,\label{eq1}
\end{align}
\begin{equation}
\left.
\begin{cases}
u\cdot \n=0\,\,\,\,\,\,\,\mathrm{on}\,\,\,\pt\Omega,\\
(\D(u)\n)_{\tau}+\a u_{\tau}=0\,\,\,\,\,\,\,\mathrm{on}\,\,\,\pt\Omega.
\end{cases}\label{nab2}
\right.
\end{equation}

We now derive some estimates for the above elliptic system in the exterior domain $\Omega$.
\begin{lemma}\label{exterior}
Let $p\in[2,6]$, and let $u$ be a smooth solution of \eqref{eq1}--\eqref{nab2}. Then, there exists a positive constant $C$ depending only on $\mu$, $\l$, $p$ and $\Omega$ such that the following estimates hold:
(1) if $F\in L^p(\Omega)$, then
\begin{align}
\|u\|_{D^{2,p}}\leq C(\|F\|_{L^p}+\|\nabla u\|_{L^p});\label{ell1}
\end{align}
(2) if $F=\div f$ with $f=(f^{ij})_{3\times 3}$, $f^{ij}\in L^p(\Omega)$, then
\begin{align}
\|\nabla u\|_{L^p}\leq C(\|f\|_{L^p}+\|u\|_{L^6});\label{ell2}
\end{align}
(3) if $F=\div f$ with $f=(f^{ij})_{3\times 3}$, $f^{ij}\in L^\iy(\Omega)\cap L^2(\Omega)$, then $\nabla u\in BMO(\Omega)$ and
\begin{align}
\|\nabla u\|_{BMO(\Omega)}\leq C(\|f\|_{L^\iy}+\|f\|_{L^2}+\|u\|_{L^\iy}).\label{ell3}
\end{align}
\end{lemma}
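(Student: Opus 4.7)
My strategy is to reduce each of the three estimates to a combination of (a) the classical $L^p$ theory for the Lam\'{e} system with Navier-slip boundary condition on a smooth bounded domain, and (b) the Calder\'{o}n-Zygmund theory for the constant-coefficient Lam\'{e} operator on the whole space $\R^3$. I would fix $R_0>1$ with $\S\subset B_{R_0}$ and a cutoff $\chi\in C^\iy_c(\R^3)$ with $\chi\equiv 1$ on $B_{R_0}$ and $\mathrm{supp}\,\chi\subset B_{R_0+1}$, and write $u=u_1+u_2$ with $u_1=\chi u$ and $u_2=(1-\chi)u$, extending $u_2$ by zero across $\pt\Omega$. Then $u_1$ solves $Lu_1=\chi F+[L,\chi]u$ on the bounded domain $\Omega\cap B_{R_0+1}$, with the slip condition \eqref{nab2} on $\pt\Omega$ and zero Dirichlet data on $\pt B_{R_0+1}$, while $u_2$ solves $Lu_2=(1-\chi)F-[L,\chi]u$ on all of $\R^3$; the commutator $[L,\chi]u$ is a first-order expression supported in the annulus $B_{R_0+1}\setminus B_{R_0}$, hence controlled in any $L^p$ by $\|\nabla u\|_{L^p}$.

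For \eqref{ell1} I would invoke the Agmon-Douglis-Nirenberg / Solonnikov $W^{2,p}$ theory for the Lam\'{e} system with mixed slip/Dirichlet conditions on $\Omega\cap B_{R_0+1}$ to bound $\|u_1\|_{W^{2,p}}$ by $\|F\|_{L^p}+\|\nabla u\|_{L^p}$, together with the classical Calder\'{o}n-Zygmund estimate $\|\nabla^2 u_2\|_{L^p(\R^3)}\leq C\|Lu_2\|_{L^p(\R^3)}$ for the whole-space Lam\'{e} operator; adding the two pieces gives \eqref{ell1}. For \eqref{ell2}, when $F=\div f$ the operator $\nabla L^{-1}\div$ is bounded on $L^p(\R^3)$ for $p\in(1,\iy)$ by CZ theory, which controls $\nabla u_2$ by $\|f\|_{L^p}$ plus annulus commutator terms, while the divergence-form $L^p$ estimate for Lam\'{e}-slip on the bounded domain gives $\|\nabla u_1\|_{L^p}\leq C(\|f\|_{L^p}+\|u\|_{L^p(\Omega\cap B_{R_0+1})})$; the last norm is dominated by $\|u\|_{L^6}$ via H\"{o}lder on the bounded set.

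For \eqref{ell3} the far-field input is that $\nabla L^{-1}\div$ sends $L^\iy$ into BMO on $\R^3$ by classical CZ theory, which yields the $\|f\|_{L^\iy}$ contribution; the $\|f\|_{L^2}$ term is needed to handle the $L^2$ part of our BMO norm in the presence of the cutoff and commutator. For the bounded piece I would reduce to the whole-space case by flattening $\pt\Omega$ in a tubular neighborhood and reflecting across the boundary, then apply the $L^\iy$-to-BMO CZ bound, producing the right-hand side in terms of $\|f\|_{L^\iy}+\|u\|_{L^\iy}$. The main obstacle is precisely this BMO endpoint bound on the bounded-domain Lam\'{e}-slip problem: although the $L^p$ theory is classical and the Sobolev-type embeddings give $W^{1,p}\hookrightarrow \mathrm{BMO}$ for finite $p$, the sharp $L^\iy$-to-BMO endpoint for a mixed boundary value problem with curved boundary requires a careful adaptation of CZ theory near $\pt\Omega$; one must additionally verify that the final constant depends only on $\mu$, $\l$, $p$ and $\Omega$, absorbing all annulus and commutator contributions without introducing dependence on $u$ itself.
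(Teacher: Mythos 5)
Your overall decomposition (cutoff near the obstacle, bounded-domain elliptic theory for $\chi u$, whole-space Calder\'{o}n--Zygmund theory for $(1-\chi)u$) is exactly the paper's strategy, and part (1) goes through as you describe. The genuine gap is in part (2), in your treatment of the commutator $[L,\chi]u$. You state that this commutator is ``controlled in any $L^p$ by $\|\nabla u\|_{L^p}$''; that is acceptable for \eqref{ell1}, where $\|\nabla u\|_{L^p}$ sits on the right-hand side, but for \eqref{ell2} the quantity $\|\nabla u\|_{L^p}$ is precisely what is being estimated and may \emph{not} appear on the right (only $\|f\|_{L^p}+\|u\|_{L^6}$ may), and the commutator's constant is not small enough to absorb. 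The paper's fix is to integrate by parts inside the commutator, writing the first-order terms $\nabla\chi\cdot\nabla u$ as $\div(\nabla\chi\otimes u)$ minus zeroth-order terms; this produces a source of the form $\div H + h$ where $H$ and $h$ are \emph{zeroth order} in $u$ and supported in the annulus, hence controlled by $\|f\|_{L^p}+\|u\|_{L^6}$. Even then one is left with a non-divergence-form, compactly supported source $h$, which cannot be fed into the ``divergence-form gradient estimate'': the paper handles it via an auxiliary full-regularity problem $Lv_h=h$ on the bounded domain, and on $\R^3$ via the explicit fundamental solution together with a duality argument in the homogeneous spaces $D^{1,q}_0(\R^3)$ (one could alternatively use Hardy--Littlewood--Sobolev, since $\nabla\Gamma\sim|x|^{-2}$ and $h$ has compact support). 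None of this appears in your proposal, and without it the claimed bound for $\nabla u_2$ by ``$\|f\|_{L^p}$ plus annulus commutator terms'' does not close.

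A related omission affects part (3): the same zeroth-order commutator remainder $h$ must be inserted into a BMO estimate, and the paper's device is to write $h=\div M_h$ with $M_h\in L^\iy$ by integrating $h$ along one coordinate direction (possible because $h$ is bounded with compact support), so that everything is in divergence form with bounded data. For the curved-boundary BMO endpoint you propose flattening and reflecting, and you yourself flag that this ``requires a careful adaptation of CZ theory''; the paper instead invokes the known BMO regularity result of Acquistapace for strongly elliptic systems in bounded domains, which is the cleaner route and avoids justifying a reflection for the Lam\'{e} system with slip conditions.
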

\begin{proof}
(1) Based on the standard $L^p$-estimate for the strongly elliptic systems (see \cite{ADN1959,ADN1964}), the deducing of \eqref{ell1} is on the same lines as in the Section 5 of \cite{cho2004}.

(2) Choosing a cut-off function $\varphi\in C^\iy_c(\R^3)$ such that $\vp=1$ in $B_5$ and $\vp=0$ on $B^c_{10}$, we define
\begin{align}
v=\vp u\,\,\,\,\,\,\,\,\mathrm{and}\,\,\,\,\,\,\,\,w=(1-\vp u)\equiv \psi u.
\end{align}
Then, for $j=1,2,3$, we have
\begin{align}
Lv^j=&\vp Lu^j+2\mu\sum^3_{i=1}\vp_{x_i}u^j_{x_i}+\mu\triangle\vp u^j+(\mu+\l)\vp_{x_j}\div u+(\mu+\l)\big(\sum^3_{i=1}\vp_{x_i}u^i\big)_{x_j}\notag\\
=&\sum^3_{i=1}\big[(\vp f^{ij})_{x_i}-\vp_{x_i}f^{ij}\big]+2\mu\sum^3_{i=1}(\vp_{x_i}u^j)_{x_i}-\mu\triangle\vp u^j\notag\\
&+(\mu+\l)\sum^3_{i=1}\big[(\vp_{x_j}u^i)_{x_i}
-\vp_{x_ix_j}u^i\big]+(\mu+\l)\sum^3_{i=1}\big(\delta_{ij}\sum^3_{k=1}\vp_{x_k}u^k\big)_{x_i}\notag\\
\equiv& \sum^3_{i=1}\big(H^{ij}\big)_{x_i}+h^j,
\end{align}
where $H^{ij}=\vp f^{ij}+2\mu\vp_{x_i}u^j+(\mu+\l)\vp_{x_j}u^i+(\mu+\l)\delta_{ij}\sum^3_{k=1}\vp_{x_k}u^k$, $h^j=-\vp_{x_i}f^{ij}-\mu\triangle\vp u^j-(\mu+\l)\vp_{x_ix_j}u^i$, and
\begin{numcases}{\delta_{ij}=}
1,\,\,\,\,\,\,\,\mathrm{if}\,\,i=j;\notag\\
0,\,\,\,\,\,\,\,\mathrm{if}\,\,i\neq j.\notag
\end{numcases}
Thus, if we let $H=(H^{ij})_{3\times3}$ and $h=(h^1,h^2,h^3)$, then we have
\begin{equation}
\left.\begin{cases}
Lv-h=\div H\,\,\,\,\mathrm{in}\,\,\Omega_{10},\\
v\,\,\,\,\, \mathrm{satisfies}\,\,\eqref{nab2}\,\,\,\,\mathrm{on}\,\,\partial\Omega,\\
v=0\,\,\,\,\,\mathrm{on}\,\,\,\,\partial B_{10}.\label{eqv}
\end{cases}
\right.
\end{equation}

Next, let $v_h\in W^{1,p}(\Omega_{10})$ be the unique weak solution to the following elliptic system:
\begin{equation}
\left.\begin{cases}
Lv_h=h\,\,\,\,\mathrm{in}\,\,\Omega_{10},\\
v_h\,\,\,\,\, \mathrm{satisfies}\,\,\eqref{nab2}\,\,\,\,\mathrm{on}\,\,\partial\Omega,\\
v=0\,\,\,\,\,\mathrm{on}\,\,\,\,\partial B_{10}.
\end{cases}
\right.
\end{equation}
The standard $L^p$-estimate for strongly elliptic systems \cite{ADN1959,ADN1964}, together with H\"{o}lder's inequality, gives that
\begin{align}
\|v_h\|_{W^{2,p}(\Omega_{10})}\leq C\|h\|_{L^p(\Omega_{10})}\leq C(\|f\|_{L^p}+\|u\|_{L^6}).
\end{align}
Therefore, we have
\begin{equation}
\left.\begin{cases}
L(v-v_h)=\div H\,\,\,\,\mathrm{in}\,\,\Omega_{10},\\
v-v_h\,\,\,\,\, \mathrm{satisfies}\,\,\eqref{nab2}\,\,\,\,\mathrm{on}\,\,\partial\Omega,\\
v-v_h=0\,\,\,\,\,\mathrm{on}\,\,\,\,\partial B_{10}.
\end{cases}
\right.
\end{equation}
Again, it follows from the classical theory for strongly elliptic systems \cite{ADN1959,ADN1964} that
\begin{align}
\|\nabla (v-v_h)\|_{L^p(\Omega_{10})}\leq C\|H\|_{L^p(\Omega_{10})}\leq C(\|f\|_{L^p}+\|u\|_{L^6}).
\end{align}
Consequently, we have
\begin{align}
\|\nabla v\|_{L^p(\Omega_{10})}\leq \|\nabla (v-v_h)\|_{L^p(\Omega_{10})}+\|\nabla v_h\|_{L^p(\Omega_{10})}\leq C(\|f\|_{L^p}+\|u\|_{L^6}).\label{estv}
\end{align}

Similarly, $w$ satisfies
\begin{align}
Lw-g=\div G\,\,\,\,\,\,\,\,\,\,\mathrm{in}\,\,\,\R^3,\label{eqw}
\end{align}
where $G^{ij}=\psi f^{ij}+2\mu\psi_{x_i}u^j+(\mu+\l)\psi_{x_j}u^i+(\mu+\l)\delta_{ij}\sum^3_{k=1}\psi_{x_k}u^k$, $g^j=-\sum^3_{i=1}\psi_{x_i}f^{ij}-\mu\triangle\psi u^j-(\mu+\l)\sum^3_{i=1}\psi_{x_ix_j}u^i$. Here, we have extended $w$, $g$ and $G$ to all of $\R^3$ by setting them equal to zero on $\S$.
Consider the equation
\begin{align}
Lw_g=g\,\,\,\,\,\,\,\,\,\,\,\,\mathrm{in}\,\,\R^3.\label{eqwg}
\end{align}
There is an explicit representation formula for the solution to \eqref{eqwg} as follows (see the Chapter 10 in \cite{Mclean2000}):
\begin{align}
w_g(x)=\int_{\R^2}\Gamma(x-y)g(y)dy\,\,\,\,\,\,\,\,\forall\,x\in\, \R^3,
\end{align}
where $\Gamma(x-y)=\frac{1}{8\pi\mu(2\mu+\l)}\big((3\mu+\l)|x-y|\mathrm{I}_3+(\mu+\l)\frac{(x-y)(x-y)^T}{|x-y|^3}\big)$, $\mathrm{I}_3$ is the $3\times3$ unit matrix. Therefore, we have
\begin{align}
L(w-w_g)=\div G \,\,\,\,\,\,\,\,\,\,\,\,\mathrm{in}\,\,\R^3.
\end{align}
We can obtain from the classical $L^p(\R^2)$-estimate for singular integral operators \cite{Stein1993} that
\begin{align}
\|\nabla (w-w_g)\|_{L^p(\R^3)}\leq C\|G\|_{L^p(\R^3)}\leq C(\|f\|_{L^p}+\|u\|_{L^6}),\label{dual0}
\end{align}
where we have used the fact that all the derivatives of $\psi$ have compact supports subset of $B_{10}\setminus B_5$.

Thus, it suffices to estimate $\|\nabla w_g\|_{L^p(\R^3)}$. For $p\in(2,6]$, let $q$ satisfy that $\frac{1}{q}+\frac{1}{p}=1$ and let $r>10$ be fixed. For $\phi\in L^q(B_r)$, we extend $\phi$ to $\R^3$ by setting it equal to zero outside of $B_r$. Then, for $w^j_g(x)=\int_{\R^3}\sum^3_{k=1}\Gamma^{jk}(x-y)g^k(y)dy$, we have
\begin{align}
\|\partial_{x_i} w^j_g\|_{L^p(B_r)}=&\sup_{\|\phi\|_{L^q(B_r)}\leq 1}\big|\int_{\R^3}\partial_{x_i} w^j_g(x)\phi(x) dx\big|\notag\\
=&\sup_{\|\phi\|_{L^q(B_r)}\leq 1}\big|\int_{\R^3}\sum^3_{k=1}g^k(y)\Phi_k(y)dy\big|\label{dual1}
\end{align}
for $1\leq i,j\leq 3$, where $\Phi_k(y)=\int_{\R^3}\Gamma^{jk}(x-y)\phi^k(x)dx$.

Next, since $\frac{6}{5}\leq q<2$, we denote by $D^{1,q}_0(\R^3)$ the completion of $C^\iy_0(\R^3)$ in the norm $\|f\|_{D^{1,q}_0(\R^3)}=\|\nabla f\|_{L^q(\R^3)}$ as in \cite{Galdi}, and denote by $D^{-1,p}_0(\R^3)$ the completion of $C^\iy_0(\R^3)$ in the norm
\begin{align}
\|f\|_{D^{-1,p}_0(\R^3)}=\sup_{z\in D^{1,q}_0(\R^3),\,\|z\|_{D^{1,q}_0(\R^3)}=1}\big|\int_{\R^3}fzdx\big|.\label{duiou}
\end{align}
It is shown in the Chapter 2 of \cite{Galdi} that $D^{-1,p}_0(\R^3)$ and $\big(D^{1,q}_0(\R^3)\big)^\prime$ are isomorphic, where $\big(D^{1,q}_0(\R^3)\big)^\prime$ is the normed dual space of $D^{1,q}_0(\R^3)$. We claim that $\Phi_k\in D^{1,q}_0(\R^3)$. Indeed, it follows from the classical Calderon-Zygmund Theorem \cite{Stein1993} that
\begin{align}
\|\nabla \Phi_k\|_{L^q(\R^3)}\leq C\|\phi\|_{L^q(B_r)},\label{dual2}
\end{align}
where $C$ is a positive constant independent of $r$. Moreover, since $\phi$ has compact support, we have $\Phi_{k}(y)\rightarrow 0$ as $|y|\rightarrow \iy$. Hence, $\Phi_k\in D^{1,q}_0(\R^3)$. Then, \eqref{dual1} and \eqref{dual2} lead to
\begin{align}
\|\partial_{x_i} w^j_g\|_{L^p(B_r)}\leq C\|g\|_{D^{-1,p}_0(\R^3)}.\label{dual3}
\end{align}
Noting that $\mathrm{supp}g\subset B_{10}\setminus B_5$, we obtain, after using \eqref{duiou} and applying H\"{o}lder's inequality and Sobolev's inequality, that
\begin{align}
\|g\|_{D^{-1,p}_0(\R^3)}=&\sup_{z\in D^{1,q}_0(\R^3),\,\|z\|_{D^{1,q}_0(\R^3)}=1}\big|\int_{\R^3}gzdx\big|.\notag\\
\leq&\sup_{z\in D^{1,q}_0(\R^3),\,\|z\|_{D^{1,q}_0(\R^3)}=1}\|g\|_{L^{\frac{3p}{3+p}}(\R^3)}\|z\|_{L^{\frac{3q}{3-q}}(\R^3)}\notag\\
\leq&C\|g\|_{L^p(B_{10}\setminus B_5)}\leq C(\|f\|_{L^p}+\|u\|_{L^6}).\label{dual4}
\end{align}
Thus, letting $r\rightarrow\iy$, we obtain from  \eqref{dual3} and \eqref{dual4} that
\begin{align}
\|\nabla w_g\|_{L^p}\leq C(\|f\|_{L^p}+\|u\|_{L^6})\,\,\,\,\,\,\,\,p\in (2,6]. \label{p26}
\end{align}

For the case that $p=2$, multiplying \eqref{eqwg} by $w_g$, we obtain, after integrating the resulting equality over $\R^3$, together with using the method of integration by parts, that
\begin{align}
\|\nabla w_g\|_{L^2(\R^3)}\leq C\|g\|_{L^2(\R^3)}\leq C(\|f\|_{L^2(\R^3)}+\|u\|_{L^6}).
\end{align}
This, together with \eqref{p26}, \eqref{dual0} and \eqref{estv}, leads to \eqref{ell2}.

(3) In the proof we use the same notations as in the part (2). Similarly, it suffices to estimate $\|\nabla v\|_{BMO(\Omega_{10})}$ and $\|\nabla w\|_{BMO(\R^3)}$. First, noting that both of $h$ and $g$ have compact supports subset of $B_{10}\setminus B_5$, we extend $h$ and $g$ to $\R^3$ by setting them equal to zero outside of their supports, respectively. Then, we define a $3\times3$ matrix-value function $M_h=(M^{ij}_h)_{3\times3}$ as follows:
\begin{equation}
M^{ij}_h(x_1,x_2,x_3)=\left.\begin{cases}
\int_{-\iy}^{x_1}h^1(s,x_2,x_3)ds\,\,\,\,\,\,\mathrm{if}\,\,i=j=1,\\
\int_{-\iy}^{x_2}h^2(x_1,s,x_3)ds\,\,\,\,\,\,\mathrm{if}\,\,i=j=2,\\
\int_{-\iy}^{x_3}h^3(x_1,x_2,s)ds\,\,\,\,\,\,\mathrm{if}\,\,i=j=3,\\
0\,\,\,\,\,\,\,\,\,\,\,\,\mathrm{if}\,\,i\neq j.
\end{cases}\label{sameway}
\right.
\end{equation}
Therefore, it follows from \eqref{eqv} that
\begin{equation}
\left.\begin{cases}
Lv=\div (H+M_h)\,\,\,\,\mathrm{in}\,\,\Omega\cap B_{10},\\
v\,\,\,\,\, \mathrm{satisfies}\,\,\eqref{nab2}\,\,\,\,\mathrm{on}\,\,\partial\Omega,\\
v=0\,\,\,\,\,\mathrm{on}\,\,\,\,\partial B_{10}.
\end{cases}\label{bddsystem}
\right.
\end{equation}
Noting that \eqref{bddsystem} is a strongly elliptic system in a bounded domain, we can apply the similar method as in \cite{BMO1992} to obtain that
\begin{align}
\|\nabla v\|_{BMO(\Omega_{10})}\leq& C\big(\|H+M_h\|_{L^\iy(\Omega_{10})}+\|H+M_h\|_{L^2(\Omega_{10})}\big)\notag\\
\leq &C\big(\|H\|_{L^\iy(\Omega_{10})}+\sum^2_{j=1}\|M^{jj}_h\|_{L^\iy(\Omega_{10})}\big)\notag\\
\leq &C\big(\|H\|_{L^\iy(\Omega_{10})}+\int^{10}_{-10}\|h\|_{L^\iy(\Omega_{10})}ds\big)\notag\\
\leq &C\big(\|f\|_{L^\iy}+\|u\|_{L^\iy}\big).
\end{align}

Next, we define $M_g=(M^{ij}_g)_{3\times3}$ in the same way as in \eqref{sameway}. Similarly, it follows from the Fefferman-Stein's classical result on $BMO$-boundedness of singular integral operators \cite{Stein1993} that
\begin{align}
\|\nabla w\|_{BMO(\R^3)}\leq& C\big(\|G+M_g\|_{L^\iy(\R^3)}+\|G+M_g\|_{L^2(\R^3)}\big)\notag\\
\leq& C\big(\|f\|_{L^\iy}+\|f\|_{L^2}+\|u\|_{L^\iy}\big).
\end{align}

The proof is completed.
\end{proof}

The next lemma is a variant of the Brezis-Waigner inequality \cite{1980}, which will be used to derive the gradient estimate for density.
\begin{lemma}\label{BW}
Let $f\in W^{1,q}(\Omega)$ with $q\in(2,\iy)$. Then, there exists a positive constant $C$ depending on $q$ and $\Omega$ such that
\begin{align}
\|f\|_{L^\iy}\leq C\big(1+\|f\|_{BMO}\ln(e+\|\nabla f\|_{L^q})\big).
\end{align}
\end{lemma}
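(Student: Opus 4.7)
The plan is to reduce the inequality to its classical counterpart on $\R^3$ and transfer it to $\Omega$ via an extension argument. Because $\Omega=\R^3\setminus\S$ with $\S$ compact and $\pt\Omega$ smooth, $\Omega$ is a uniform domain and admits an extension operator $E$ that is bounded simultaneously from $W^{1,q}(\Omega)$ to $W^{1,q}(\R^3)$ and from $BMO(\Omega)$ to $BMO(\R^3)$ (Jones's extension, or a Stein-type reflection adapted to the smooth compact boundary). Setting $\tilde f=Ef$, one has
\begin{align}
\|\tilde f\|_{W^{1,q}(\R^3)}\leq C\|f\|_{W^{1,q}(\Omega)},\qquad \|\tilde f\|_{BMO(\R^3)}\leq C\|f\|_{BMO(\Omega)}.\notag
\end{align}

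Applying the classical variant of Brezis-Wainger on $\R^3$ (in the $BMO$ form of Kozono-Taniuchi) to $\tilde f$ gives
\begin{align}
\|\tilde f\|_{L^\iy(\R^3)}\leq C\big(1+\|\tilde f\|_{BMO(\R^3)}\ln(e+\|\nabla \tilde f\|_{L^q(\R^3)})\big),\notag
\end{align}
and since $\|f\|_{L^\iy(\Omega)}\leq \|\tilde f\|_{L^\iy(\R^3)}$, combining with the extension estimates yields the desired bound. The whole-space inequality itself is proved by the standard three-scale argument: on small scales, Morrey's embedding $W^{1,q}\hookrightarrow C^{1-3/q}$ controls the oscillation $|\tilde f(x)-\tilde f_{B_r(x)}|$ by $Cr^{1-3/q}\|\nabla\tilde f\|_{L^q}$; on intermediate scales, telescoping along dyadic radii bounds $|\tilde f_{B_r(x)}-\tilde f_{B_R(x)}|$ by $C[\tilde f]_{BMO}\ln(R/r)$; on large scales, the $L^2$ part of $\|\tilde f\|_{BMO}$ dominates $|\tilde f_{B_R(x)}|$ by $CR^{-3/2}\|\tilde f\|_{L^2}$. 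Optimizing $r$ and $R$ produces the logarithmic dependence.

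The main obstacle is arranging the simultaneous extension. For the smooth exterior domain $\Omega$ this is classical, but the $BMO$ boundedness does require the uniform character of $\Omega$, which is automatic here because $\S$ is compact with smooth boundary. Alternatively, one can work intrinsically: reproduce the three-scale argument directly on $\Omega$ using the averages $f_{\Omega\cap B_r(x)}$ from the paper's definition of BMO, observing that $|\Omega\cap B_r(x)|/|B_r(x)|$ is uniformly bounded below in $x,r$ (so the Poincar\'e and telescoping constants are uniform), and that for $R$ exceeding the diameter of $\S$ the set $\Omega\cap B_R(x)$ is comparable to $B_R(x)$, giving $|f_{\Omega\cap B_R(x)}|\leq CR^{-3/2}\|f\|_{L^2}$ as before. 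Either route reaches the conclusion; the extension route is shorter provided one is willing to quote the $BMO$-boundedness of Jones's operator.
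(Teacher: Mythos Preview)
The paper does not supply a proof here; it simply refers the reader to \cite{SWZ2011} (Sun--Wang--Zhang). Your argument is therefore not a comparison against an in-paper proof but a genuine filling-in, and it is essentially the standard route: extend to $\R^3$ via a Jones/Stein operator (valid because $\Omega$ is an exterior domain with smooth compact boundary, hence uniform), invoke the Kozono--Taniuchi $BMO$ endpoint of Brezis--Wainger on $\R^3$, and transfer back. The alternative intrinsic three-scale argument you outline is equally valid and is in fact closer to how \cite{SWZ2011} proceeds for domains.

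One point worth flagging: your Morrey step $|\tilde f(x)-\tilde f_{B_r(x)}|\le Cr^{1-3/q}\|\nabla\tilde f\|_{L^q}$ requires $q>3$ in three dimensions, not merely $q>2$ as the lemma is stated. This is not a flaw in your proof but rather an imprecision in the paper's hypothesis; for $q\in(2,3]$ a $W^{1,q}(\R^3)$ function need not lie in $L^\infty$ at all, so the inequality cannot hold in that range. The only use of the lemma in the paper is with $q=6$ (inside Lemma~\ref{rp}, for $\|\nabla v\|_{L^\infty}$), so the restriction $q>3$ is harmless. You may wish to note this explicitly.
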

\begin{proof}
See \cite{SWZ2011}.
\end{proof}
The following lemma is due to Zlotnik \cite{Z2000}, which will be used to obtain the uniform (in time) upper bounds for the density.
\begin{lemma}\label{Z2000}
Let the function $y$ satisfy
\begin{align}
y^\prime(t)=g(y)+\bar{b}^\prime(t)\,\,\,\,\,\mathrm{on}\,\,\,[0,T],\,\,\,\,\,y(0)=y^0,
\end{align}
with $g\in C(\R)$ and $y, \bar{b}\in W^{1,1}(0,T)$. If $g(\iy)=-\iy$ and
\begin{align}
\bar{b}(t_2)-\bar{b}(t_1)\leq N_0+N_1(t_2-t_1)\label{N}
\end{align}
for all $0\leq t_1< t_2\leq T$ with some $N_0\geq 0$ and $N_1\geq 0$, then
\begin{align}
y(t)\leq max\{y^0,\bar{\xi}\}+N_0<\iy \,\,\,\,\,\,\,\mathrm{on}\,\,[0,T],\label{barxi}
\end{align}
where $\bar{\xi}$ is a constant such that
\begin{align}
g(\xi)\leq -N_1\,\,\,\,\mathrm{for}\,\,\,\xi\geq\bar{\xi}.
\end{align}
\end{lemma}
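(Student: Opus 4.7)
The plan is to argue by contradiction. Suppose toward a contradiction that there exists $t^{*}\in[0,T]$ with $y(t^{*})>\max\{y^{0},\bar{\xi}\}+N_{0}$. Since $y\in W^{1,1}(0,T)$, its absolutely continuous representative is well defined, and I will work with that representative so that $y$ is continuous on $[0,T]$ and the fundamental theorem of calculus applies to the ODE. The strategy is to locate a ``last crossing'' time of the level $\bar{\xi}$ before $t^{*}$, and on the interval where $y$ stays above $\bar{\xi}$ use that $g(y(s))\leq -N_{1}$ to cancel with the growth rate $N_{1}$ allowed by the hypothesis on $\bar{b}$.

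I would split the analysis into two cases. \textbf{Case 1:} $y(t)>\bar{\xi}$ for every $t\in[0,t^{*}]$. Then in particular $y^{0}>\bar{\xi}$. Integrating the ODE from $0$ to $t^{*}$ gives
\begin{equation*}
y(t^{*})-y^{0}=\int_{0}^{t^{*}}g(y(s))\,ds+\bar{b}(t^{*})-\bar{b}(0)\leq -N_{1}t^{*}+N_{0}+N_{1}t^{*}=N_{0},
\end{equation*}
so $y(t^{*})\leq y^{0}+N_{0}\leq\max\{y^{0},\bar{\xi}\}+N_{0}$, contradicting the assumption. \textbf{Case 2:} the set $\{t\in[0,t^{*}]:y(t)\leq\bar{\xi}\}$ is nonempty; put $t_{1}$ equal to its supremum. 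Because $y(t^{*})>\bar{\xi}+N_{0}\geq\bar{\xi}$ and $y$ is continuous, we have $t_{1}<t^{*}$, $y(t_{1})=\bar{\xi}$, and $y(t)>\bar{\xi}$ on $(t_{1},t^{*}]$. Integrating the ODE over $[t_{1},t^{*}]$ and using $g(y(s))\leq -N_{1}$ on this interval together with \eqref{N} yields
\begin{equation*}
y(t^{*})-\bar{\xi}\leq -N_{1}(t^{*}-t_{1})+N_{0}+N_{1}(t^{*}-t_{1})=N_{0},
\end{equation*}
so $y(t^{*})\leq\bar{\xi}+N_{0}\leq\max\{y^{0},\bar{\xi}\}+N_{0}$, again a contradiction.

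The estimate \eqref{barxi} then holds for every $t\in[0,T]$, and finiteness follows from the existence of $\bar{\xi}$, which in turn is guaranteed by the hypothesis $g(\infty)=-\infty$ applied to the threshold $-N_{1}$. The main (and essentially only) subtlety is the selection of the crossing time $t_{1}$ and the justification that $y$ is continuous so that $y(t_{1})=\bar{\xi}$ exactly; once this is in place the cancellation between $g(y)\leq -N_{1}$ and the $N_{1}(t_{2}-t_{1})$ term in the bound on $\bar{b}$ is automatic, and the additive constant $N_{0}$ is precisely what shows up on the right-hand side of \eqref{barxi}.
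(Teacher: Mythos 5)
Your proof is correct. Note that the paper does not actually prove this lemma; it simply cites Zlotnik's original work, so your argument fills in details the authors chose to outsource. The last-crossing argument you give is the standard one for this kind of comparison lemma: the two cases (either $y$ never drops to the level $\bar{\xi}$ before $t^{*}$, or it does and one integrates only from the last time $y(t_1)=\bar{\xi}$) are handled cleanly, the cancellation of $-N_1(t_2-t_1)$ from $g(y)\leq -N_1$ against the $+N_1(t_2-t_1)$ allowed by \eqref{N} is exactly the point of the hypothesis, and you correctly flag the only real subtlety, namely passing to the absolutely continuous representative of $y\in W^{1,1}$ so that the intermediate-value and fundamental-theorem steps are legitimate. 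The existence of $\bar{\xi}$ from $g(\iy)=-\iy$ is also correctly invoked. Nothing is missing.
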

\begin{proof}
See \cite{Z2000}.
\end{proof}

\section{The case with large angular velocity}
In this section, we let $\Omega$ and $\a$ satisfy either of the two cases stated in the Theorem \ref{main}. We will first reduce the problem \eqref{eq}--\eqref{nab1} to \eqref{eqb} below, and then establish some necessary \textit{a priori} bounds for the smooth solutions to \eqref{eqb}. Finally, we will extend the local classical solution guaranteed by Proposition \ref{Local} to a global one.

\subsection{Reduction of the problem}
Inspired by \cite{H1999} and \cite{MN1992}, we first fix a scalar function $\varphi \in H^5(\R^3)$ as follows:
\begin{equation}
\varphi(x)=\varphi(|x|)=\left.
\begin{cases}
\exp^{-\ep^5(|x|-2)^5},\,\,\,|x|\geq 2, \\
1\,\,\,\,\,\,\,\,\,|x|< 2,
\end{cases}
\right.
\end{equation}
where $\ep$ is a positive constant.

Put
\begin{align}
b(t)=b(x,t)=-\frac{1}{2}\nabla\times\big(\varphi(x)\mid x\mid^2\vo(t)\big).\label{b}
\end{align}
It is easy to check that $b\in C\big([0,\iy);H^5(\R^3)\big)$, $\mathrm{div}b(t)=0$ in $\R^3$, and $b(t)=\vo(t)\times x$ near $\pt\Omega$ for all $t\geq0$.

Let
\begin{equation*}
u(x,t)=U(x,t)-b(x,t).
\end{equation*}
Then, the problem \eqref{eq}-\eqref{nab1} is reduced to the following:
\begin{equation}
\left.
\begin{cases}
\rho_{t}+(u+b-\vo\times x)\cdot\nabla \rho +\rho \div u=0, \\
\rho u_{t}+\rho(u+b-\vo\times x)\cdot\nabla u+\rho u\cdot \nabla b+\rho\vo\times u+\nabla p(\r)\\
-2\mu\div\D(u)-\lambda\nabla\textmd{div}u=\rho F(b)+\mu\triangle b,\,\,\,\,\,\,\,\,\,x\in \Omega,\,t\in (0,\iy),\\
u\,\,\,\,\, \mathrm{satisfies}\,\,\eqref{nab2}\,\,\,\,\mathrm{on}\,\,\partial\Omega,\\
\rho(0)=\rho_{0},\, u(0)=u_0\triangleq U_{0}-b(0), \,\,\,\,\,x\in\Omega,
\end{cases}\label{eqb}
\right.
\end{equation}
where $u_0(x)\triangleq U_0(x)-b(x,0)$ and $F(b)\triangleq(\vo\times x)\cdot\nabla b-b\cdot\nabla b-\vo\times b-b_t$.

It suffices to estimate the solution of \eqref{eqb}. Thus, in this section, let $T>0$ be a fixed time and let $(\r,u)$ be a smooth solution to \eqref{eqb} on $\Omega\times(0,T]$ in the class \eqref{localclass} with smooth initial data $(\r_0,u_0)$ satisfying
\begin{align}
&u_0\in D^1\cap D^2,\,\,\,\r_0\in L^1\cap H^2\cap W^{2,6},\,\,\,P(\r_0)\in H^2\cap W^{2,6},\,\,\,|x|\r_0^{\frac{1}{2}}\in L^\iy, \label{datab1}\\
&\|u_0\|_{D^1}+\|u_0\|_{D^2}\leq M,\,\,\,\,\,0\leq\inf\r_0\leq \sup\r_0\leq \br,\label{datab2}
\end{align}
and
\begin{equation}
-\mu\triangle \big(u_0+b(0)\big)-(\mu+\lambda)\nabla\div u_0+\nabla P(\r_0)=\r_0^{\frac{1}{2}}g\label{datab3}
\end{equation}
for some given constants $M>0$ and $\br\geq1$, and for some $g\in L^2$.

\subsection{\textit{A Priori} Estimates on the smooth solution}

To estimate $(\r,u)$, we set $\sg(t)=\min\{1,t\}$ and define
\begin{align}
A_1(T)\triangleq\sup_{t\in[0,T]}(\sg\|\nabla u\|^2_{L^2}),\,\,\,\,\,\,A_2(T)\triangleq\sup_{t\in[0,T]}\|\nabla u\|^2_{L^2}.
\end{align}
We denote by
\begin{align}
m_0\triangleq \int_{\Omega}\r_0dx
\end{align}
the initial mass.

In addition, we extend $\a$ to $\Omega$ such that $\a\in C^{\iy}_c(\overline{\Omega})$.

We have the following \textit{a priori} estimates on $(\r,u)$.
\begin{proposition}\label{priori3}
Suppose that $\o\in W^{4,1}(0,\iy)$, and suppose that \eqref{datab1}--\eqref{datab3} hold. Then, there exist positive constants  $K$ and $\e$ both depending only on $\mu$, $\l$, $a$, $\g$, $\w41$, $\br$, $\a$ and $M$ such that if $(\r,u)$ is a smooth solution to \eqref{eqb} on $\Omega\times(0,T]$ satisfying
\begin{align}
A_1(T)\leq 2\e^{\frac{1}{2}}, \,\,\,A_2(\sg(T))\leq 3K,\,\,\,\sup_{\Omega\times(0,T]}\r\leq 2\br, \label{condition}
\end{align}
then it holds that
\begin{align}
A_1(T)\leq \e^{\frac{1}{2}},\,\,\,A_2(\sg(T))\leq2K,\,\,\,\sup_{\Omega\times(0,T]}\r\leq \frac{7}{4}\br,\label{est0}
\end{align}
provided that $\ep\leq\e$ and $m_0\leq \ep^7$.
\end{proposition}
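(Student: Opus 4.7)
The plan is to run a continuation/bootstrap argument in the spirit of Huang--Li--Xin \cite{HLX2012}: starting from the hypotheses \eqref{condition}, derive the strictly improved bounds in \eqref{est0} by exploiting two small parameters, the initial mass $m_0\le\ep^7$ and the cutoff scale $\ep$ buried in the auxiliary field $b$ via $\varphi$. Two structural features are decisive. First, $b$ is divergence free, coincides with $\vo\times x$ in $\{|x|<2\}\supset\pt\Omega$ (so that $u$ satisfies the homogeneous Navier condition \eqref{nab2}), and every derivative of $\varphi$ picks up a factor of $\ep$, giving $\|\nabla^k b\|_{L^q}+\|F(b)\|_{L^q}\le C\,\ep^{\theta_{k,q}}\,\w41$ for $k\ge 1$. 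Second, the modified material derivative $\tud$ (the shape-operator correction flagged in the introduction) removes the normal trace of $\nabla u$ that would otherwise appear on the curved boundary when one integrates by parts in the Lam\'e term.

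The first step is the basic energy identity. Testing $\eqref{eqb}_2$ by $u$ and combining with the continuity equation produces a Gronwall inequality for $E(t)=\int(\tfrac12\r|u|^2+\tfrac{a}{\g-1}\r^\g)\,dx$; the Coriolis term $\int\r(\vo\times u)\cdot u$ vanishes by antisymmetry, the viscous and boundary pieces combine to $2\mu\|\D(u)\|_{L^2}^2+\l\|\div u\|_{L^2}^2+2\mu|\sqrt{\a}\,u|_{L^2(\pt\Omega)}^2$ which Lemma \ref{korn} dominates from below by $c\|\nabla u\|_{L^2}^2$, and the $b$-induced sources $\int\r F(b)\cdot u+\int\mu\lap b\cdot u-\int\r u\cdot\nabla b\cdot u$ are controlled by H\"older combined with Lemma \ref{sob} and $\sup\r\le 2\br$, yielding a bound of the schematic form $\sup_{[0,T]}E(t)+\int_0^T\|\nabla u\|_{L^2}^2\,dt\le C(m_0+\ep^{2\theta})$.

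The second step is the first-order estimates. Testing $\eqref{eqb}_2$ by $\ud=u_t+(u+b-\vo\times x)\cdot\nabla u$ (after the $\tud$-correction on the boundary) produces a Huang--Li--Xin type identity
\begin{align*}
\tfrac{d}{dt}\Big(\mu\|\D(u)\|_{L^2}^2+\tfrac{\l}{2}\|\div u\|_{L^2}^2-\int P(\r)\div u\,dx+\mathrm{bdry}\Big)+\tfrac12\int\r|\ud|^2\,dx\le \mathrm{RHS},
\end{align*}
where RHS gathers the convection commutator, $\int\r F(b)\cdot\ud$, $\int\mu\lap b\cdot\ud$, $\int\r u\cdot\nabla b\cdot\ud$ and the pressure source, each handled via Lemma \ref{gn}, the second-order Lam\'e estimate \eqref{ell1} applied with $F=\r\ud+\nabla P(\r)$ plus smooth $b$-terms, and the smallness of $m_0$ and $\ep$. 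Multiplying by $\sg(t)$ and by $1$ separately and integrating in time closes bounds on $\|\nabla u\|_{L^2}^2$ strong enough to return $A_1(T)\le\e^{1/2}$ and $A_2(\sg(T))\le 2K$ once $K$ is chosen large relative to $M$.

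For the density upper bound I pass to Lagrangian coordinates along the transport field $u+b-\vo\times x$, where continuity reads $\tfrac{D}{Dt}\ln\r=-\div u=-\tfrac{1}{2\mu+\l}(G+P(\r))$ with effective viscous flux $G=(2\mu+\l)\div u-P(\r)$; the Lam\'e system for $G$ together with \eqref{ell3} and Lemma \ref{BW} provides $\|G\|_{L^\iy}\le C(1+\|G\|_{BMO}\ln(e+\|\nabla G\|_{L^6}))$, with the right-hand side driven by the $\sqrt\r\,\ud$ norms from Step 2. Integrating along a particle path and applying Lemma \ref{Z2000} with $g(\xi)=-\tfrac{a}{2\mu+\l}\xi^\g$ (so $g(\iy)=-\iy$) and increments $N_0,N_1$ that are small powers of $m_0+\ep$ produces $\bar\xi$ only slightly above $\br$, hence $\r\le\tfrac{7}{4}\br$. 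The main obstacle is the simultaneous bookkeeping of the two small scales: every rotation-induced error must be tested against the natural energy with enough slack for the quadratic losses from the hypothesis constants $2\e^{1/2},3K,2\br$ to be strictly beaten, and the handling of the unbounded factor $\vo\times x$ in the convective terms relies critically on the weight bound $|x|\r^{1/2}\in L^\iy$ from \eqref{datab1} together with the integration-by-parts trick flagged in the introduction --- this is precisely what forces the quantitative relation $m_0\le\ep^7$.
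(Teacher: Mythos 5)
Your overall strategy coincides with the paper's: the basic energy estimate (Lemma \ref{Energy}) closed by Korn's inequality and the antisymmetry of the rotating terms, Hoff-type first-order estimates obtained by testing against the modified material derivative and integrating against the weights $1$ and $\sg(t)$ separately (Lemmas \ref{Lemma1}, \ref{EST3}, \ref{EST4}), and Zlotnik's lemma applied along particle paths with the effective viscous flux to get $\r\le\frac{7}{4}\br$ (Lemma \ref{density}). Two points in your sketch deviate from what actually closes the argument. First, for the density bound you propose $\|F\|_{L^\iy}\le C\bigl(1+\|F\|_{BMO}\ln(e+\|\nabla F\|_{L^6})\bigr)$ via \eqref{ell3} and Lemma \ref{BW}; the additive constant in the Brezis--Wainger inequality is fatal here, because on $[0,\sg(T)]$ Zlotnik's lemma requires $N_1=0$ and $N_0$ small, whereas $\int_0^{\sg(T)}C\,dt$ is an $O(1)$ contribution to $N_0$ that cannot be beaten down by $\ep$. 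The paper instead interpolates, $\|F\|_{L^\iy}\le C\|F\|_{L^2}^{1/4}\|F\|_{W^{1,6}}^{3/4}$ (Lemma \ref{gn}), so that the small factor $\|F\|_{L^2}\lesssim\|\nabla u\|_{L^2}+\|P(\r)\|_{L^2}$ carries a positive power of $\ep$ while $\int\sg\|\nabla\ud\|_{L^2}^2\,dt$ makes the strong factor time-integrable; the BMO/Brezis--Wainger machinery only enters later, for $\|\nabla u\|_{L^\iy}$ in the density-gradient estimate, which is not part of this proposition. Second, the weight $|x|\r_0^{1/2}\in L^\iy$ plays no role in Proposition \ref{priori3}: within these estimates the unbounded convection $(\vo\times x)\cdot\nabla$ is neutralized purely by $\div(\vo\times x)=0$ and $(\vo\times x)\cdot x=0$ under integration by parts; the weight is needed only afterwards for the $u_t$ estimates (Lemmas \ref{support}--\ref{timederi}). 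Relatedly, to control $\|\nabla u\|_{L^3}^3$ and $\|\nabla u\|_{L^4}^4$ in the first-order step you will need the $L^p$ bounds on the vorticity and flux from Lemma \ref{FW} (the local-frame boundary analysis), not merely the interior Lam\'e estimate \eqref{ell1}.
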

\begin{proof}
The Proposition \ref{priori3} is a easy consequence of Lemma \ref{EST3}, \ref{EST4} and \ref{density} below.
\end{proof}

Throughout this section, we use the convention that $C$ denotes a generic positive constant depending only on $\mu$, $\l$, $a$, $\g$, $\w41$, $\br$, $\a$ and $M$, but not on $g$ and $T$, and we write $C(\beta)$ to emphasize that $C$ depends on $\beta$. We also use the Einstein summation convention. Moreover, we assume that $m_0+\ep<1$ for simplicity.

We start with some observation on $b$. By a direct calculation, we have
\begin{align}
&\|b\|_{L^\iy}\leq C\ep^{-1}|\o(t)|,\,\,\,\,\,\,\|b_t\|_{L^\iy}\leq C\ep^{-1}|\frac{d\o}{dt}(t)|,\,\,\,\,\,\,
\|b_{tt}\|_{L^\iy}\leq C\ep^{-1}|\frac{d^2\o}{dt^2}(t)|,\label{be0}\\
&\|\nabla b\|_{L^\iy}\leq C|\o(t)|,\,\,\,\,\,\,\|\nabla b_t\|_{L^\iy}\leq C|\frac{d\o}{dt}(t)|,\,\,\,\,\,\,\|\nabla b_{tt}\|_{L^\iy}\leq C|\frac{d^2\o}{dt^2}(t)|,\label{be1}
\end{align}
and
\begin{align}
\|(|b|+|\cdot|)\nabla b\|_{L^\iy}\leq C\ep^{-1}|\o(t)|,\,\,\,\,\,\,\,\,\,\|(|b|+|\cdot|)\nabla b_t\|_{L^\iy}\leq C\ep^{-1}|\frac{d\o}{dt}(t)|.\label{bex}
\end{align}
Here, $\|(|b|+|\cdot|)\nabla b\|_{L^\iy}$ stands for the $L^\iy$-norm of $(|b|+|x|)\nabla b$.

Similarly, we have
\begin{align}
\|\nabla^\beta b\|_{L^p}\leq C_p\ep^{\b-1-\frac{1}{p}}|\o(t)|,\,\,\,\,\,\,\,\|\nabla^\beta b_t\|_{L^p}\leq C_p\ep^{\b-1-\frac{1}{p}}|\frac{d\o}{dt}(t)|\label{be2}
\end{align}
and
\begin{align}
\|(|b|+|\cdot|)\nabla^\beta b\|_{L^p}\leq C_p\ep^{\b-2-\frac{1}{p}}|\o(t)|,\,\,\,\,\,\,\,\|(|b|+|\cdot|)\nabla^\beta b_t\|_{L^p}\leq C_p\ep^{\b-2-\frac{1}{p}}|\frac{d\o}{dt}(t)|\label{be2x}
\end{align}
for $|\beta|=2,3,4,5$ and $1\leq p\leq \iy$, where $C_p$ is a positive constant depending only on $p$.

Next, we derive the conservation of mass and the energy estimate for $(\r,u)$.
\begin{lemma}\label{Energy}
Let $(\r,u)$ be a smooth solution to \eqref{eqb} on $\Omega\times(0,T]$ with $\sup_{\Omega\times(0,T]}\r\leq 2\br$. Suppose that \eqref{datab1}--\eqref{datab3} hold. Then, it holds that
\begin{align}
\|\r(t)\|_{L^1}=\|\r_0\|_{L^1}=m_0\label{mass}
\end{align}
for all $t\in[0,T]$.

Moreover, suppose that $\o\in W^{4,1}(0,\iy)$. Then, it holds that
\begin{align}
\sup_{0\leq t\leq T}(\|\sqrt{\r}u\|^2_{L^2}+\|P(\r)\|_{L^1})+\int_0^T\|\nabla u\|^2_{L^2}dt\leq C\ep^{\frac{1}{3}},\label{energy}
\end{align}
provided that $m_0\leq \ep^3$.
\end{lemma}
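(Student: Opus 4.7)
The proof splits naturally into the mass identity and the energy inequality.

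For \eqref{mass}, I would rewrite the continuity equation $\eqref{eqb}_1$ in conservation form $\r_t + \div(\r w) = 0$ with $w = u + b - \vo \times x$, using $\div b = 0$ (since $b$ is a curl) and $\div(\vo \times x) = 0$. Integrating over $\Omega$ and applying the divergence theorem, the boundary integral on $\pt\Omega$ vanishes because $w \cdot \n = 0$ there (combining $u \cdot \n = 0$ from \eqref{nab2} with $b(t) = \vo(t) \times x$ near $\pt\Omega$), while the contribution at infinity vanishes by the decay of $\r$.

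For \eqref{energy}, I would multiply $\eqref{eqb}_2$ by $u$ and integrate. The convective terms collapse to $\frac{d}{dt}\int \frac{1}{2}\r|u|^2\,dx$ via the continuity equation together with the vanishing of $\r w\cdot\n|u|^2$ on $\pt\Omega$; the Coriolis term $\r(\vo\times u)\cdot u$ vanishes pointwise; the pressure term becomes $-\int P\div u\,dx$ after integration by parts and is identified with $\frac{d}{dt}\int \frac{P(\r)}{\g-1}\,dx$ (derived by multiplying $\eqref{eqb}_1$ by $P'(\r)/(\g-1)$ and using $\div w = \div u$); the viscous term, after integration by parts exploiting the Navier-slip conditions, yields the dissipation $2\mu\int|\D u|^2 + \l\int(\div u)^2 + 2\mu\int_{\pt\Omega}\a|u_\tau|^2\,dS$, which by Korn's inequality (Lemma \ref{korn}) controls $c_0\|\nabla u\|_{L^2}^2$. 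This produces an inequality of the form
\begin{align}
\tfrac{d}{dt}E(t) + c_0\|\nabla u\|_{L^2}^2 \leq |I_1|+|I_2|+|I_3|,\notag
\end{align}
with $E(t) = \|\sqrt\r u\|_{L^2}^2 + \tfrac{2}{\g-1}\|P(\r)\|_{L^1}$, $I_1 = -\int \r u\cdot\nabla b\cdot u\,dx$, $I_2 = \mu\int \triangle b\cdot u\,dx$, and $I_3 = \int \r F(b)\cdot u\,dx$.

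The heart of the argument is balancing the $\ep^{-1}$ singularities in $b$ and $F(b)$ against the smallness $m_0 \leq \ep^3$. For $I_1$, the estimate $\|\nabla b\|_{L^\iy}\leq C|\o(t)|$ from \eqref{be1} gives $|I_1|\leq C|\o(t)|E(t)$, absorbed by Gronwall since $\int_0^T|\o|\,dt\leq\|\o\|_{W^{4,1}}$. For $I_2$, H\"older combined with Lemma \ref{sob} yields $|I_2|\leq\|\triangle b\|_{L^{6/5}}\|u\|_{L^6}\leq C\ep^{1/6}|\o|\|\nabla u\|_{L^2}$ via \eqref{be2}; Young's inequality absorbs a fraction of the dissipation and, after integration in time using $\|\o\|_{L^2}^2\leq\|\o\|_{L^\iy}\|\o\|_{L^1}\leq C$, leaves a contribution of order $C\ep^{1/3}$. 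For $I_3$, the bound $\|F(b)\|_{L^\iy}\leq C\ep^{-1}(|\o|^2+|\o'|)$ from \eqref{be0}--\eqref{bex}, together with $\int \r|u|\,dx \leq m_0^{1/2}\|\sqrt\r u\|_{L^2}$, gives (after Young) a contribution of size $Cm_0\ep^{-2}\leq C\ep$. The initial energy is small: $\|\sqrt{\r_0}u_0\|_{L^2}^2\leq\|\r_0\|_{L^{3/2}}\|u_0\|_{L^6}^2\leq Cm_0^{2/3}M^2$ by interpolation and Lemma \ref{sob}, while $\|P(\r_0)\|_{L^1}\leq C\br^{\g-1}m_0$. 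A final Gronwall step closes the estimate at the claimed bound $C\ep^{1/3}$.

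The main obstacle is exactly this tension between the cutoff scale $\ep$ in $b$ (which cannot be taken large since $\o$ itself is allowed to be large) and the prescribed smallness of $m_0$; the dominant contribution $C\ep^{1/3}$ comes from $I_2$ and dictates the exponent appearing in \eqref{energy}, with everything else lower order provided $m_0\leq\ep^3$.
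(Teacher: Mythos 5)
Your proposal is correct and follows essentially the same route as the paper: the same energy identity (with the $\frac{\g}{\g-1}\r^{\g-1}$ multiplier for the pressure), Korn's inequality via Lemma \ref{korn} to recover $\|\nabla u\|_{L^2}^2$ from the dissipation, the same three error terms estimated with \eqref{be0}--\eqref{be2}, the dominant $C\ep^{1/3}$ contribution from $\|\triangle b\|_{L^{6/5}}\|u\|_{L^6}$, and a closing Gronwall argument; your initial-energy bound via $\|\r_0\|_{L^{3/2}}\|u_0\|_{L^6}^2$ is a harmless variant of the paper's $\|\r_0\|_{L^1}\|u_0\|_{L^\iy}^2$. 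The only minor imprecision is attributing the vanishing of the flux at infinity solely to the decay of $\r$: for the unbounded term $\vo\times x$ one should invoke the pointwise orthogonality $(\vo\times x)\cdot\frac{x}{|x|}=0$, as the paper does.
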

\begin{proof}
Integrating $\eqref{eqb}_1$ over $\Omega$, we obtain, after using the boundary condition $\eqref{eqb}_3$, that
\begin{align}
\frac{d}{dt}\int_{\Omega}\r dx=&\int_{\Omega}-\div\big(\r(u+b-\vo\times x)\big)\notag\\
=&-\lim_{R\rightarrow\iy}\int_{\pt B_R}\r(\vo\times x)\cdot \frac{x}{|x|}d\Gamma\notag\\
=&0,
\end{align}
which implies \eqref{mass}.

Sobolev's inequality yields that
\begin{align}
\|\o\|_{W^{k,\iy}(0,\iy)}\leq C\|\o\|_{W^{k+1,1}(0,\iy)}\leq C\label{wiy}
\end{align}
for $k=1,2,3$.

Add $\eqref{eqb}_1$ multiplied by $\frac{\g}{\g-1}\r^{\g-1}$ and $\eqref{eqb}_2$ multiplied by $u$. Integrating the resulting equality over $\Omega$, we obtain, after using the method of integration by parts, that
\begin{align}
&\frac{d}{dt}(\frac{1}{2}\|\sqrt{\r}u\|^2_{L^2}+\frac{1}{\g-1}\|P(\r)\|_{L^1})+2\mu\|\D(u)\|^2_{L^2}+\l\|\div u\|^2_{L^2}+2\mu|\sqrt{\a}u|^2_{L^2(\pt\Omega)}\notag\\
\leq& \|\r (u\cdot\nabla b)\cdot u\|_{L^1}+\|\r F(b)\cdot u\|_{L^1}+\mu\|\triangle b\cdot u\|_{L^1}\notag\\
\leq& \|\r^{\frac{1}{2}}\|_{L^\iy}\|\nabla b\|_{L^\iy}\|\sqrt{\r} u\|^2_{L^2}+\|\r^{\frac{1}{2}}\|_{L^2}\|F(b)\|_{L^\iy}\|\sqrt{\r}u\|_{L^2}
+C\|\triangle b\|_{L^\frac{6}{5}}\|u\|_{L^6}\notag\\
\leq& C(\br)|\o|\|\sqrt{\r}u\|^2_{L^2}+Cm^\frac{1}{2}_0\ep^{-1}(|\o|+|\frac{d\o}{dt}|)\|\sqrt{\r}u\|_{L^2}+C\ep^{\frac{1}{6}}|\o|\|\nabla u\|_{L^2},\notag\\
\leq& C(\br)|\o|\|\sqrt{\r}u\|^2_{L^2}+C(|\o|+|\frac{d\o}{dt}|)(\|\sqrt{\r}u\|^2_{L^2}+m_0\ep^{-2})+\delta\|\nabla u\|^2_{L^2}+C\frac{1}{\delta}\ep^{\frac{1}{3}}|\o|, \label{ene}
\end{align}
for any $\delta>0$, where we have used \eqref{be0}, \eqref{be1} and \eqref{wiy}, H\"{o}lder's inequality, Sobolev's inequality and Young's inequality, and the fact that
\begin{align}
\int_{\Omega}\r\big((b-\o\times x)\cdot\nabla u\big)\cdot udx=&\lim_{R\rightarrow\iy}\int_{\Omega_R}\frac{1}{2}\r(b-\vo\times x)\cdot\nabla(|u|^2)dx\notag\\
=&-\lim_{R\rightarrow\iy}\int_{\pt B_R}\frac{1}{2}\r|u|^2(\vo\times x)\cdot \frac{x}{|x|}d\Gamma\notag\\
=&0.
\end{align}
Note that
\begin{align}
\min\big\{2\mu,2\mu+3\l\big\}\|\D(u)\|_{L^2}\leq 2\mu\|\D(u)\|_{L^2}+\l\|\div u\|^2_{L^2}.\label{fact}
\end{align}
Integrating \eqref{ene} over $(0,t)$, we can obtain, after using the Lemma \ref{korn} and choosing $\delta$ suitably small, that
\begin{align}
&\frac{1}{2}\|\sqrt{\r}u\|^2_{L^2}+\frac{1}{\g-1}\|P(\r)\|_{L^1}+\frac{1}{C}\int^t_0\|\nabla u\|^2_{L^2}ds\notag\\
\leq&\int_{\Omega}\frac{1}{2}\r_0u^2_0+\frac{1}{\g-1}P(\r_0)dx+C\int^t_0(|\o|+|\frac{d\o}{dt}|)\|\sqrt{\r}u\|^2_{L^2}ds+C(m_0\ep^{-2}+\frac{1}{\delta}\ep^{\frac{1}{3}})\notag\\
\leq&C\|\r_0\|_{L^1}\|u_0\|^2_{L^\iy}+C(\br)\|\r_0\|_{L^1}+C\int^t_0(|\o|+|\frac{d\o}{dt}|)\|\sqrt{\r}u\|^2_{L^2}ds+C(m_0\ep^{-2}+\frac{1}{\delta}\ep^{\frac{1}{3}})\notag\\
\leq& C(M)(m_0+m_0\ep^{-2}+\frac{1}{\delta}\ep^{\frac{1}{3}})+C\int^t_0(|\o|+|\frac{d\o}{dt}|)\|\sqrt{\r}u\|^2_{L^2}ds,\label{gron}
\end{align}
where we have used \eqref{datab2}, \eqref{be0}, \eqref{wiy} and Sobolev's inequality.

Thus, \eqref{gron}, together with Gronwall's inequality, leads to \eqref{energy}, provided that $m_0\leq \ep^3$.

The proof is completed.
\end{proof}

In order to estimate $A_1(T)$ and $A_2(\sg(T))$, we first note that the normals on the boundary can be extended naturally to the interior as follows:
\begin{lemma}\label{expmap}
Let $l_0$ be the injectivity radius of the normal exponential map of $\pt\Omega$, i.e., the largest number such that the map
\begin{align}
\pt\Omega\times(-l_0,l_0)\,\,\rightarrow\,\,\Omega^{l_0}\triangleq\{x\in\R^3:\,\mathrm{dist}(x,\pt\Omega)<l_0\}:\,\,\,(\bar{x},l)\,\mapsto\,x=\bar{x}+l\n(\bar{x})\notag
\end{align}
is an injection.

Then, there exists a positive constant $C$ depending only on $\Omega$ such that $l_0\geq C>0$.
\end{lemma}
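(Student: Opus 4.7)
The plan is to invoke the standard tubular neighborhood theorem, exploiting the fact that $\pt\Omega$ is a smooth compact hypersurface (compactness follows since $\S=\R^3\setminus\Omega$ is compact). Write $\Phi(\bar{x},l)=\bar{x}+l\n(\bar{x})$ for the normal exponential map, viewed as a smooth map from the open tube $\pt\Omega\times(-\infty,\infty)$ into $\R^3$. The proof falls into a local step via the inverse function theorem and a global step via a compactness/contradiction argument.

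First, I would verify that $D\Phi(\bar{x}_0,0)$ is an isomorphism at every point $(\bar{x}_0,0)$ of the zero section. Tangent vectors to $\pt\Omega$ at $\bar{x}_0$ are sent by $D\Phi(\bar{x}_0,0)$ to themselves (as elements of $T_{\bar{x}_0}\R^3$), while $\partial_l$ is sent to $\n(\bar{x}_0)$; together these span $\R^3$ because $\n(\bar{x}_0)$ is transverse to $T_{\bar{x}_0}\pt\Omega$. By the inverse function theorem, for each $\bar{x}_0$ there exist a geodesic ball $V(\bar{x}_0)\subset\pt\Omega$ of some radius $r(\bar{x}_0)>0$ and some $\delta(\bar{x}_0)>0$ such that $\Phi$ restricts to a diffeomorphism on $V(\bar{x}_0)\times(-\delta(\bar{x}_0),\delta(\bar{x}_0))$. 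Since $\pt\Omega$ is compact and $\n$ is smooth, a standard finite-cover/uniform continuity argument upgrades this to uniform constants $r_0,\delta_0>0$ depending only on $\Omega$ such that $\Phi$ is a diffeomorphism on each $V(\bar{x}_0)\times(-\delta_0,\delta_0)$, with $V(\bar{x}_0)$ the geodesic ball of radius $r_0$ around $\bar{x}_0$ in $\pt\Omega$.

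Next, I would promote this local injectivity to a uniform \emph{global} injectivity radius by contradiction. Suppose no $l_0>0$ depending only on $\Omega$ works; then there exist sequences $l_k\downarrow 0$ and pairs $(\bar{x}_k,l'_k)\neq(\bar{y}_k,l''_k)$ with $|l'_k|,|l''_k|<l_k$ and $\Phi(\bar{x}_k,l'_k)=\Phi(\bar{y}_k,l''_k)$. By compactness of $\pt\Omega$, pass to subsequences so that $\bar{x}_k\to\bar{x}_*$ and $\bar{y}_k\to\bar{y}_*$. Since $|l'_k|,|l''_k|\to 0$, we have $\Phi(\bar{x}_k,l'_k)\to\bar{x}_*$ and $\Phi(\bar{y}_k,l''_k)\to\bar{y}_*$, whence $\bar{x}_*=\bar{y}_*$. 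For $k$ large, both $\bar{x}_k$ and $\bar{y}_k$ lie in $V(\bar{x}_*)$ and $|l'_k|,|l''_k|<\delta_0$, so the local diffeomorphism property forces $(\bar{x}_k,l'_k)=(\bar{y}_k,l''_k)$, a contradiction. Hence one may take $l_0=\min\{r_0,\delta_0\}/2>0$, a constant depending only on $\Omega$.

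There is no substantive obstacle here; the only point that requires a moment of care is the upgrade from pointwise to uniform local injectivity, which is immediate from compactness of $\pt\Omega$ and continuity of $(\bar{x},l)\mapsto D\Phi(\bar{x},l)$. The conclusion $l_0\geq C>0$ is then a direct consequence, completing the proof.
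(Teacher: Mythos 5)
Your argument is correct. Note that the paper does not actually prove this lemma at all: its ``proof'' consists of the single line ``See \cite{CL2000}'', deferring to the literature on tubular neighborhoods of compact hypersurfaces. What you have written is precisely the standard self-contained proof that such a citation stands in for: non-degeneracy of $D\Phi$ along the zero section, the inverse function theorem to get local diffeomorphisms, compactness of $\pt\Omega$ to make the local radii uniform, and a subsequence/contradiction argument to pass from uniform local injectivity to a uniform global injectivity radius. The one step you rightly flag as needing care --- upgrading the pointwise IFT neighborhoods to uniform $r_0,\delta_0$ --- does require a quantitative version of the inverse function theorem (uniform bounds on $D\Phi$, its inverse, and its modulus of continuity, all available here since $\pt\Omega$ is compact and smooth, so the second fundamental form is bounded); a bare finite subcover of the pointwise neighborhoods is not quite enough by itself, but this is routine and your phrasing makes clear you know what is involved. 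One further small point worth being explicit about in the contradiction step: you need $\bar{x}_k,\bar{y}_k\to\bar{x}_*$ in the \emph{intrinsic} metric of $\pt\Omega$ to place them in the geodesic ball $V(\bar{x}_*)$, but convergence in $\R^3$ implies intrinsic convergence for a compact smooth embedded hypersurface, so this is harmless. In short, your proof is complete and supplies the detail the paper outsources.
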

\begin{proof}
See \cite{CL2000}.
\end{proof}

Thus, there exists a vector field $\tn\in C^\iy_c(\overline{\Omega})$ such that $\tn|_{\pt\Omega}=\n$ and $\pt_{\tn}\tn=0$ on $\Omega^{l_0}$.

Next, we fix a sequence of local orthonormal tangent vector fields on $\pt\Omega$. There exists a covering of $\Omega$ under the form $\Omega\subset Q_0\cup^n_{i=1}Q_i$, where $\overline{Q_0}\subset\Omega$ and $Q_i\subset \Omega^{l_0}$ for each $i\geq 1$. Note that we can choose the size of each $Q_i$ $(1\leq i\leq n)$ as small as we want. Therefore, on each $\pt\Omega\cap Q_i$ $(1\leq i\leq n)$, there exist a local orthonormal tangent vector field $(\t_i,\io_i)$ and a local coordinate system $(y_1,y_2)$ (see \cite{Gray} for more details) such that there exists a smooth diffeomorphism
\begin{align}
\phi_i:\,\,D_i\subset \R^2\,\,\rightarrow\,\,\pt\Omega\cap Q_i:\,\,\,(y_1,y_2)\,\mapsto\,\phi_i(y_1,y_2),\notag
\end{align}
and it holds that
\begin{align}
&\t_i=(|\pt_{y_1}\phi_i|)^{-1}\pt_{y_1}\phi_i,\,\,\,\,\,\io_i=(|\pt_{y_2}\phi_i|)^{-1}\pt_{y_2}\phi_i,\,\,\,\,\,\t_i\cdot\io_i=0.
\end{align}

Similarly, by the Lemma \ref{expmap}, for $1\leq i\leq n$, we can extend $(\t_i,\io_i)$ to $\Omega\cap Q_i$ as follows: there exist $\tt_i,\tio_i\in C^\iy_c(\overline{\Omega})$ such that $\tt_i|_{\pt\Omega\cap Q_i}=\t_i$, $\pt_n\tt_i|_{\pt\Omega\cap Q_i}=0$, $\tio_i|_{\pt\Omega\cap Q_i}=\io_i$ and $\pt_n\tio_i|_{\pt\Omega\cap Q_i}=0$ (for more details see \eqref{coordinate}--\eqref{coordinate2} below).

In addition, there exists a partition of unit $\{\psi_i\}^n_{i=0}$ subordinate to the above covering of $\overline{\Omega}$ such that for $i=0,1,\cdots,n$, $\psi_i(x)\in C_c^\iy(\R^3)$, $\mathrm{supp}\psi_i\subset Q_i$, and $\Sigma^n_{i=0}\psi_i=1$ on $\overline{\Omega}$.

For simplicity, let
\begin{align}
f\triangleq u+b-\vo\times x.\label{fdef}
\end{align}
We now define the modified material derivative as
\begin{align}
\ud\triangleq u_t+f\cdot\nabla u+h,\label{uddef}
\end{align}
where
\begin{align}
h=h_1+h_2,\,\,\,\,\,
h_1\triangleq \big((u\cdot\nabla\tn)\cdot u\big)\tn,\,\,\,\,\,h_2\triangleq-(u\cdot\tn)\big[(u\cdot\nabla\tn)-\big((u\cdot\nabla\tn)\cdot\tn\big)\tn\big].\label{defh}
\end{align}

The following lemma is about some calculation which will be used later.
\begin{lemma}\label{calculation}
Let $(\r,u)$ be a smooth solution to \eqref{eqb} on $\Omega\times(0,T]$ with $\sup_{\Omega\times(0,T]}\r\leq 2\br$. Then, it holds that
\begin{align}
&(\pt_nu)_{\tau}=\theta(u)-2\a u_{\tau}\,\,\,\,\,\mathrm{on}\,\,\pt\Omega,\label{ntau}\\
&\ud\cdot \n=0\,\,\,\,\,\,\,\,\,\mathrm{on}\,\,\pt\Omega,\label{udot}\\
&\big(2\D(u)\n-(\nabla\times u)\times\n\big)_{\tau}=-2(\D(n)u)_{\tau}\,\,\,\,\,\mathrm{on}\,\,\pt\Omega,\label{rotcal}
\end{align}
and
\begin{align}
\div u&=\sum^n_{i=1}\psi_i\big(\pt_{\tau_i}(u\cdot\t_i)+\pt_{\iota_i}(u\cdot\io_i)\big)+\pt_n(u\cdot\tn)\,\,\,\,\,\mathrm{on}\,\,\pt\Omega.\label{divu}
\end{align}
Here, $\theta$ is the shape operator of the boundary that is given by
\begin{align}
\theta(u)=\nabla \tn u.
\end{align}

Moreover, $h$ satisfies the following equation:
\begin{align}
&\r h_t+\r f\cdot\nabla h-(2\mu+\l)\lap h=\sum^3_{i=1}I_i,\label{eqh}
\end{align}
where
\begin{align}
I_1=&-\big((H\cdot\nabla\tn)\cdot u\big)\tn-\big((u\cdot\nabla\tn)\cdot H\big)\tn,\label{I1}\\
I_2=&(H\cdot\tn)\big[(u\cdot\nabla\tn)-\big((u\cdot\nabla\tn)\cdot\tn\big)\tn\big],\label{I2}\\
I_3=&(u\cdot\tn)\big[(H\cdot\nabla\tn)-\big((H\cdot\nabla\tn)\cdot\tn\big)\tn\big],\label{I3}\\
I_4=&O(1)(|\nabla u|^2+|u|^3+|u|^2+|u|)1_{\{x\in\mathrm{supp}\tn\}},\label{I4}\\
H\triangleq&(\mu+\l)\nabla\times(\nabla\times u)-\nabla P(\r).
\end{align}
Here, and in the sequel, the Landau notation $O(1)$ is used to indicate a function whose absolute value remains uniformly bounded. For example,
\begin{align}
\r\big[\big((\vo\times u)\cdot\nabla\tn\big)\cdot u\big]\tn=O(1)|u|^2
\end{align}
means that, for some positive constant $C$, the following holds:
\begin{align}
\big|\r\big[\big((\vo\times u)\cdot\nabla\tn\big)\cdot u\big]\tn\big|\leq C|u|^2.
\end{align}
And $1_{\{x\in\mathrm{supp}\tn\}}$ is the characteristic function such that
\begin{equation}
1_{\{x\in\mathrm{supp}\tn\}}=\left.\begin{cases}
1,\,\,\,\,x\in \mathrm{supp}\tn,\\
0,\,\,\,\,\mathrm{otherwise}.
\end{cases}
\right.
\end{equation}
\end{lemma}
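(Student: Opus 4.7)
All four boundary identities and the PDE for $h$ reduce to direct computations. The first three boundary identities rely on two elementary observations on $\partial\Omega$: (a) $u\cdot\n=0$ there, since $U\cdot\n=(\vo\times x)\cdot\n$ by \eqref{nab1} and $b=\vo\times x$ in a neighborhood of $\partial\Omega$, so every tangential derivative of $u\cdot\n$ vanishes; and (b) $\nabla\n$ restricted to the tangent bundle is symmetric (the second fundamental form is self-adjoint), hence $u^j\partial_i\n^j=u^j\partial_j\n^i$ whenever $u$ is tangent to $\partial\Omega$. For \eqref{ntau} I would decompose $2\D(u)\n=\partial_\n u+(\nabla u)^T\n$ and use the identity $((\nabla u)^T\n)^i=\partial_i(u\cdot\n)-u^j\partial_i\n^j$; tangentially the first term drops by (a), the second equals $-\theta(u)$ by (b), and combining with the Navier-slip condition $(\D(u)\n)_\tau=-\a u_\tau$ yields \eqref{ntau}. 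Identity \eqref{rotcal} follows from the $\varepsilon$-identity $(\nabla\times u)\times\n=\partial_\n u-(\nabla u)^T\n$, which gives $2\D(u)\n-(\nabla\times u)\times\n=2(\nabla u)^T\n$ whose tangential part is $-2(\D(\n)u)_\tau$ by the same two observations.

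For \eqref{udot}, differentiating $u\cdot\n=0$ in $t$ gives $u_t\cdot\n=0$; on $\partial\Omega$ one has $f=u$, and differentiating $u\cdot\n=0$ along the tangent direction $u$ gives $(u\cdot\nabla u)\cdot\n=-\langle u,\theta(u)\rangle$. Since $\partial_{\tn}\tn=0$ on $\Omega^{l_0}$ forces $|\tn|\equiv 1$ there, $h_1\cdot\n=\langle u,\theta(u)\rangle$ on $\partial\Omega$, while $h_2\cdot\n=0$ thanks to the factor $u\cdot\tn$; the three contributions cancel. Identity \eqref{divu} then follows by writing $\div u=\mathrm{tr}(\nabla u)$ in each local orthonormal frame $\{\t_i,\io_i,\tn\}$ on $\Omega\cap Q_i$, summing against the partition $\{\psi_i\}$, and using $\partial_\n(u\cdot\tn)=(\partial_\n u)\cdot\n$ on $\partial\Omega$ (which holds because $\partial_\n\tn|_{\partial\Omega}=0$).

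For the PDE \eqref{eqh}, apply the operator $\r(\partial_t+f\cdot\nabla)-(2\mu+\l)\lap$ to $h=h_1+h_2$. Each $h_k$ is a smooth $\tn$-factor multiplied by an expression quadratic in $u$ and supported in $\mathrm{supp}\,\tn$; the chain rule produces leading terms of the form $\tn\cdot(\text{smooth})\cdot u\cdot\r(\partial_t+f\cdot\nabla)u$ together with lower-order pieces bounded by $|u|^2+|u|\,|\nabla u|$. Substituting the momentum equation $\eqref{eqb}_2$ in the form
\[
\r(\partial_t+f\cdot\nabla)u=(2\mu+\l)\lap u+H-\r u\cdot\nabla b-\r\vo\times u+\r F(b)+\mu\lap b
\]
is the key step: the $(2\mu+\l)\lap u$ piece is designed to cancel exactly against the leading $u\cdot\lap u$ contribution of $(2\mu+\l)\lap h$ on the left-hand side, producing a $|\nabla u|^2$-type remainder that fits into $I_4$; the $H$-contribution gives precisely $I_1+I_2+I_3$ by the very construction of $h_1,h_2$; and the remaining $b$-dependent terms are absorbed into $I_4$ using \eqref{be0}--\eqref{be2x} together with $\o\in W^{1,\iy}(0,\iy)$ from \eqref{wiy}.

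The main obstacle is the bookkeeping in this last step. Once the crucial cancellation between $-(2\mu+\l)\lap h$ and the $(2\mu+\l)\lap u$ part of $\r(\partial_t+f\cdot\nabla)h$ has been carried out, one must still verify that every remaining term (the $|\nabla u|^2$-residues from $\lap(u\cdot u)$, derivatives of $\tn$ pulled down by $f\cdot\nabla$, the $b$-dependent contributions from $-\r u\cdot\nabla b-\r\vo\times u+\r F(b)+\mu\lap b$, and the cross terms coming from the two symmetric slots of the shape operator) fits into the pointwise bound $O(|\nabla u|^2+|u|^3+|u|^2+|u|)\,1_{\{x\in\mathrm{supp}\tn\}}$. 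This is where the smoothness and compact support of $\tn$, the bounds \eqref{be0}--\eqref{be2x} on $b$, and the a priori bound $\r\le 2\br$ are all used.
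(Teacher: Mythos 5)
Your proposal is correct and follows essentially the same route as the paper: the boundary identities \eqref{ntau}, \eqref{udot} and \eqref{divu} come from $u\cdot\n=0$ on $\pt\Omega$, the symmetry of the shape operator and the Navier-slip condition, and \eqref{eqh} is obtained by applying $\r(\pt_t+f\cdot\nabla)-(2\mu+\l)\lap$ to $h$ after rewriting $\eqref{eqb}_2$ via the Hodge decomposition, so that the $(2\mu+\l)\lap u$ contributions cancel, the $H$-terms produce $I_1,I_2,I_3$, and the compact support of $\tn$ together with the bounds on $b$ puts everything else into $I_4$ --- exactly the ``direct calculation'' the paper sketches. The only (minor) divergence is that you verify \eqref{rotcal} directly from the vector identity $(\nabla\times u)\times\n=\pt_\n u-(\nabla u)^T\n$, whereas the paper simply cites Lemma 3.10 of \cite{XX2013}; your computation is correct and self-contained.
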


\begin{proof}
\eqref{ntau} follows from the boundary condition $\eqref{eqb}_3$ and the fact that
\begin{align}
(\nabla u\n)_{\tau}
=&\big(\nabla (u\cdot\tn)\big)_{\tau}-\nabla \tn u+\big(\n\cdot(\nabla\tn u)\big)\n\notag\\
=&-\nabla \tn u\,\,\,\,\,\,\,\,\mathrm{on}\,\,\,\pt\Omega.
\end{align}

Similarly, it holds that
\begin{align}
(u\cdot\nabla u)\cdot\n=u\cdot(\nabla u\n)=u\cdot\nabla(u\cdot\tn)-u\cdot(\nabla\tn u)=-h_1\cdot\tn\,\,\,\,\,\,\,\,\,\,\,\mathrm{on}\,\,\pt\Omega,
\end{align}
which implies $\eqref{udot}$.

\eqref{rotcal} follows from the Lemma 3.10 of \cite{XX2013}.

To get \eqref{divu}, it suffices to show
\begin{align}
\psi_i\div u=\psi_i\big(\pt_{\tau_i}(u\cdot\t_i)+\pt_{\iota_i}(u\cdot\io_i)+\pt_n(u\cdot\tn)\big)\,\,\,\,\,\,\,\,\,\,\mathrm{on}\,\,\pt\Omega\cap Q_i.\label{psidiv}
\end{align}
for $1\leq i\leq n$.

Let $(e_1,e_2,e_3)$ be the orthonormal canonical basis of $\R^3$. Noting that for $1\leq i\leq n$, $j=1,2,3$,
\begin{align}
\pt_j=&(e_j\cdot\t_i)\pt_{\tau_i}+(e_j\cdot\io_i)\pt_{\iota_i}+(e_j\cdot\n)\pt_n\notag\\
=&\t^j_i\pt_{\tau_i}+\io^j_i\pt_{\iota_i}+\n^j\pt_n\,\,\,\,\,\,\,\,\,\,\mathrm{on}\,\,\pt\Omega\cap Q_i,
\end{align}
and
\begin{align}
u^j=u\cdot e_j=(u\cdot\tt_i)\tt_i^j+(u\cdot\tio_i)\tio_i^j+(u\cdot\tn)\tn^j\,\,\,\,\,\,\,\,\,\,\mathrm{on}\,\,\pt\Omega\cap Q_i,
\end{align}
we can obtain \eqref{psidiv} after a direct calculation.

Using the Hodge decomposition, we rewrite the equation $\eqref{eqb}_2$ as
\begin{align}
&\r u_t+\r u\cdot\nabla u+ \nabla p(\r)-(2\mu+\l)\lap u-(\mu+\l)\nabla\times(\nabla\times u)+\rho u\cdot \nabla b+\rho\vo\times u \notag\\
=&\rho F(b)+\mu\triangle b.\label{eqhodge}
\end{align}
Using the equation \eqref{eqhodge} and noting the fact that $\tn$ has a compact support, we can obtain \eqref{eqh} after a direct calculation.

The proof is completed.
\end{proof}

Let
\begin{align}
G\triangleq\r\ud+\rho u\cdot \nabla b+\rho\vo\times u-\r F(b)-\mu\lap b-\r h.\label{G}
\end{align}

We now show the $L^p$-estimates for the vorticity $W\triangleq\nabla\times u$ and the effective viscous flux $F$ defined as
\begin{align}
F\triangleq(2\mu+\l)\div u-P(\r).\label{Fw}
\end{align}
\begin{lemma}\label{FW}
Let $(\r,u)$ be a smooth solution to \eqref{eqb} on $\Omega\times(0,T]$. Then, for $0\leq t\leq T$, $p\in [2,6]$, it holds that
\begin{align}
\|F\|_{L^2}+\|W\|_{L^2}\leq& C(\|\nabla u\|_{L^2}+\|P(\r)\|_{L^2}),\label{FL2}\\
\|\nabla F\|_{L^p}+\|\nabla W\|_{L^p}\leq& C_p(\|G\|_{L^p}+\|\nabla u\|_{L^p}+\|\nabla u\|_{L^2}),\label{FW2}\\
\|F\|_{L^p}+\|W\|_{L^p}\leq& C_p(\|G\|_{L^2}+\|\nabla u\|_{L^2}+\|P(\r)\|_{L^2}),\label{FW3}
\end{align}
and
\begin{align}
\|\nabla u\|_{L^p}\leq& C_p(\|G\|_{L^2}+\|\nabla u\|_{L^2}+\|P(\r)\|_{L^2}+\|P(\r)\|_{L^p}),\label{FW4}
\end{align}
where $C_p$ is a positive constant depending only on $p$, $\mu$, $\l$ and $\Omega$.
\end{lemma}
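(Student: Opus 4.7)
The plan rests on the Hodge-type decomposition
\begin{equation*}
2\mu\div\D(u)+\lambda\nabla\div u=(2\mu+\lambda)\nabla\div u-\mu\nabla\times(\nabla\times u),
\end{equation*}
which, after grouping the convective and rotational terms of $\eqref{eqb}_2$ into $G$, rewrites the momentum equation as
\begin{equation*}
\nabla F-\mu\nabla\times W=G\qquad\text{in }\Omega.
\end{equation*}
Taking the divergence and curl of this identity and using $\div(\nabla\times W)=0$ and $\div W=0$, one obtains the two Poisson-type problems
\begin{equation*}
\lap F=\div G,\qquad \mu\lap W=\nabla\times G\qquad \text{in }\Omega.
\end{equation*}
The bound \eqref{FL2} is then immediate from the pointwise inequalities $|F|\leq(2\mu+\lambda)|\nabla u|+|P(\r)|$ and $|W|\leq\sqrt{2}\,|\nabla u|$.

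For \eqref{FW2}, the plan is to read off the boundary data for $F$ and $W$ from the Navier-slip condition \eqref{nab2} together with the geometric identities of Lemma \ref{calculation}. Combining \eqref{rotcal} with $(\D(u)\n)_\tau=-\alpha u_\tau$ yields on $\pt\Omega$
\begin{equation*}
(W\times\n)_\tau=-2\alpha u_\tau+2(\D(\tn)u)_\tau,
\end{equation*}
while $W\cdot\n=(\nabla\times u)\cdot\n$ reduces on $\pt\Omega$ to a tangential surface curl of $u_\tau$ (because $u\cdot\n=0$), so that $W|_{\pt\Omega}$ is a first-order tangential operator applied to $u|_{\pt\Omega}$, furnishing a Dirichlet datum. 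Dually, $\pt_n F=G\cdot\n+\mu(\nabla\times W)\cdot\n$ on $\pt\Omega$, with $(\nabla\times W)\cdot\n$ expressible as a tangential surface curl of $W_\tau$, providing a Neumann datum for $F$. I would then apply the $L^p$ theory for the Dirichlet and Neumann Laplace problems on the exterior domain, via a cut-off decomposition of $\Omega$ into a bounded piece handled by the ADN estimates of \cite{ADN1959,ADN1964} and an outer piece on $B^c_5$ handled by Calder\'on--Zygmund on $\R^3$ (in the same spirit as the proof of Lemma \ref{exterior}), and control the boundary contributions through the trace theorem and Lemma \ref{sob} to obtain
\begin{equation*}
\|\nabla F\|_{L^p}+\|\nabla W\|_{L^p}\leq C_p\bigl(\|G\|_{L^p}+\|u\|_{W^{1,p}(\pt\Omega)}\bigr)\leq C_p\bigl(\|G\|_{L^p}+\|\nabla u\|_{L^p}+\|\nabla u\|_{L^2}\bigr),
\end{equation*}
which is \eqref{FW2}.

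Estimates \eqref{FW3} and \eqref{FW4} follow by interpolation together with a div-curl decomposition. For \eqref{FW3}, Lemma \ref{gn} gives, for $p\in[2,6]$,
\begin{equation*}
\|F\|_{L^p}+\|W\|_{L^p}\leq C_p\bigl(\|F\|_{L^2}+\|W\|_{L^2}+\|\nabla F\|_{L^2}+\|\nabla W\|_{L^2}\bigr),
\end{equation*}
and combining with \eqref{FL2} and the $p=2$ case of \eqref{FW2} yields \eqref{FW3}. For \eqref{FW4}, use $\div u=(F+P(\r))/(2\mu+\lambda)$ and $\nabla\times u=W$ together with the standard div-curl estimate for vector fields with $u\cdot\n=0$ on $\pt\Omega$,
\begin{equation*}
\|\nabla u\|_{L^p}\leq C_p\bigl(\|\div u\|_{L^p}+\|\nabla\times u\|_{L^p}+\|u\|_{L^6}\bigr),
\end{equation*}
combined with Lemma \ref{sob} and \eqref{FW3}.

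The main obstacle is the derivation of the correct boundary conditions for $F$ and $W$ on the curved $\pt\Omega$ and their incorporation into the $L^p$ elliptic estimates: the shape-operator contributions produced by the non-flat geometry must be absorbed into the $\|\nabla u\|_{L^p}+\|\nabla u\|_{L^2}$ factors on the right-hand side of \eqref{FW2}, which is precisely the role of the local frame machinery set up before Lemma \ref{calculation}.
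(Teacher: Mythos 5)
Your treatment of \eqref{FL2}, \eqref{FW3} and \eqref{FW4} matches the paper's (the paper derives your div-curl inequality as \eqref{FW1} by applying Lemma \ref{exterior} to \eqref{decom}), and your identification of the tangential boundary data for $W$ via \eqref{rotcal} is correct. The gap is in \eqref{FW2}, and it sits exactly where you flag the ``main obstacle'' without resolving it: the normal component of the vorticity. The slip condition makes $(W\times\n)_\tau=-2\a u_\tau+2(\D(\tn)u)_\tau$ \emph{zeroth} order in $u$, so that part of the Dirichlet datum is harmless; but $W\cdot\n$ on $\pt\Omega$ is a surface curl of $u_\tau$, i.e.\ genuinely \emph{first} order in $u$. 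Your chain of inequalities then needs $\|\nabla_\tau u\|_{W^{1-1/p,p}(\pt\Omega)}$ on the right, and the asserted bound $\|u\|_{W^{1,p}(\pt\Omega)}\leq C(\|\nabla u\|_{L^p}+\|\nabla u\|_{L^2})$ is false: the trace of $\nabla u$ on $\pt\Omega$ is only controlled through $\|\nabla u\|_{W^{1,p}}$ near the boundary, which reintroduces $\|\nabla^2 u\|_{L^p}$ and makes the estimate circular. (The same objection applies to the Neumann datum $(\nabla\times W)\cdot\n$ you propose for $F$; the paper avoids any boundary value problem for $F$ by using the pointwise identity $\nabla F=G+\mu\nabla\times W$, so that $\|\nabla F\|_{L^p}$ follows immediately from the bound on $\|\nabla W\|_{L^p}$.)

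This is precisely the point where the paper's local-frame machinery is not optional. In the coordinates \eqref{coordinate} the two mixed components $\tw^{\tio,\tn}$ and $\tw^{\tn,\tt}$ carry the zeroth-order boundary data $O(1)\tu$ and are estimated as in Lemma \ref{exterior} (see \eqref{eq5}--\eqref{together}). The remaining component $\tw^{\tt,\tio}$, which restricts to $\n\cdot W$ on the boundary, is estimated \emph{without using any boundary condition}: the paper isolates the purely tangential operator $\big((\pt_{y_1}\Phi)^{-2}\pt^2_{y_1}+(\pt_{y_2}\Phi)^{-2}\pt^2_{y_2}\big)$ acting on $\tpsi\tw^{\tt,\tio}$, rewrites the normal derivatives appearing on the right-hand side through \eqref{since1}--\eqref{since2} in terms of tangential derivatives of the already-controlled components, and applies a two-dimensional elliptic estimate on each slice $\{z=z'\}$ before integrating in $z$. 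You would need to supply an argument of this kind --- or some other device that avoids taking the trace of $\nabla u$ --- for your proof of \eqref{FW2} to close.
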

\begin{proof}
\eqref{FL2} follows from \eqref{Fw} directly.

Then, noting that $u$ satisfies the elliptic boundary value problem,
\begin{equation}
\left.
\begin{cases}
(\mu+\l)\triangle u=\nabla F-(\mu+\l)\nabla\times W+\nabla P(\r)\,\,\,\,\,\,\,\mathrm{in}\,\,\Omega,\\
u\,\,\mathrm{satisfies}\,\,\,\eqref{nab2}\,\,\,\,\,\,\,\,\,\,\,\mathrm{on}\,\,\,\,\pt\Omega,
\end{cases}\label{decom}
\right.
\end{equation}
we obtain, after applying the Lemma \ref{sob} and \ref{exterior} on the equation \eqref{decom}, that
\begin{align}
\|\nabla u\|_{L^p}\leq &C_p(\|F\|_{L^p}+\|W\|_{L^p}+\|P(\r)\|_{L^p}+\|u\|_{L^6})\notag\\
\leq &C_p(\|F\|_{L^p}+\|W\|_{L^p}+\|P(\r)\|_{L^p}+\|\nabla u\|_{L^2}).\label{FW1}
\end{align}

In order to prove \eqref{FW2}, via \eqref{rotcal}, we rewrite the equation $\eqref{eqb}_2$ as
\begin{equation}
\left.
\begin{cases}
\mu\triangle W=\nabla\times G \,\,\,\,\,\,\,\mathrm{in}\,\,\Omega,\\
\big(2\D(u)\n-W\times\n\big)_{\tau}=-2(\D(n)u)_{\tau}\,\,\,\,\,\mathrm{on}\,\,\pt\Omega.
\end{cases}\label{eq3}
\right.
\end{equation}
If we multiply $\psi_m$ $(0\leq m\leq n)$ on both sides of \eqref{eq3}, then the equation \eqref{eq3} is reduced to
\begin{equation}
\left.
\begin{cases}
\mu\triangle (\psi_mW)=\nabla\times(\psi_mG)+O(1)1_{\mathrm{supp}\psi_m}(|\nabla u|+|G|)\,\,\,\,\,\,\,\mathrm{in}\,\,\Omega\cap Q_m,\\
\big(2\psi_m\D(u)\cdot\n-\psi_mW\times\n\big)_{\tau}=-2(\psi_m\D(n)\cdot u)_{\tau}\,\,\,\,\,\mathrm{on}\,\,\pt\Omega\cap Q_m.
\end{cases}\label{eq4}
\right.
\end{equation}
Thus, it suffices to estimate $\|\nabla (\psi_mW)\|_{L^p(\Omega\cap Q_m)}$.

Provided that the size of $Q_m$ $(1\leq m\leq n)$ was chosen suitably small, it is convenient to use the local coordinates $x=x(y_1,y_2,z)=\Phi_m(y_1,y_2,z)$ in $Q_m$, where $\Phi_m$ is a smooth diffeomorphism from some bounded subset $\widetilde{D_m}\subset D_m\times(-l_0,l_0)\subset \R^3$ to $Q_m$ such that
\begin{align}
\Phi_m:\,(y_1,y_2,z)\mapsto \phi_m(y_1,y_2)+z\n(\phi_m(y_1,y_2)). \label{coordinate}
\end{align}
A local basis in $\Omega\cap Q_m$ is indeed given by $(\tt_m,\tio_m,\tn)$ satisfying
\begin{align}
&(\tt_m,\tio_m,\tn)=\big(|(\pt_{y_1}\Phi_m|)^{-1}\pt_{y_1}\Phi_m,(|\pt_{y_2}\Phi_m|)^{-1}\pt_{y_1}\Phi_m,\pt_z\Phi_m\big).\label{coordinate2}
\end{align}

In the following we omit the subscript $m$ for notational convenience. We use the notations $y=(y_1,y_2)$, $\tnabla_y=(\pt_{y_1},\pt_{y_2})$, $\tnabla=(\pt_{y_1},\pt_{y_2},\pt_{z})$ and $(\pt_{\tt},\pt_{\tio},\pt_{\tn})=(|\pt_{y_1}\Phi|^{-1}\pt_{y_1},|\pt_{y_2}\Phi|^{-1}\pt_{y_1},\pt_z)$, and let
\begin{align}
\tu(y,z)=\big(\tu^{\tt},\tu^{\tio},\tu^{\tn}\big)=\big(u^{\tt}(\Phi(y,z),u^{\tio}(\Phi(y,z),u^{\tn}(\Phi(y,z)\big),\notag\\
\tg(y,z)=\big(\tg^{\tt},\tg^{\tio},\tg^{\tn}\big)=\big(G^{\tt}(\Phi(y,z),G^{\tio}(\Phi(y,z),G^{\tn}(\Phi(y,z)\big),\notag\\
\tw(y,z)=(\tw^{\tio,\tn},\tw^{\tn,\tt},\tw^{\tt,\tio})=(\pt_{\tio}\tu^{\tn}-\pt_{\tn}\tu^{\tio},\pt_{\tn}\tu^{\tt}-\pt_{\tt}\tu^{\tn},\pt_{\tt}\tu^{\tio}-\pt_{\tio}\tu^{\tt}),
\end{align}
where $u^{\tt}=u\cdot\tt$, $u^{\tio}=u\cdot\tio$, $u^{\tn}=u\cdot\tn$.

Then, by using the local (in $Q$) decompositions such as
\begin{align}
\pt_1=&e_1\cdot\nabla=(e_1\cdot\tt)\tt\cdot\nabla+(e_2\cdot\tio)\tio\cdot\nabla+(e_1\cdot\tn)\tn\cdot\nabla\notag\\
=&\tt^1\pt_{\tt}+\tio^1\pt_{\tio}+\tn^1\pt_{\tn},\label{pt1}
\end{align}
and
\begin{align}
u^{1}=u\cdot e_1=(u^{\tt}\tt+u^{\tio}\tio+u^{\tn}\tn)\cdot e_1=\tu^{\tt}\tt^1+\tu^{\tio}\tio^1+\tu^{\tn}\tn^1,
\end{align}
we can obtain, after a direct calculation, that
\begin{align}
W(\Phi(y,z))=J(y,z)\tw(y,z)+\tu^{\tn}j_{\tn}+\tu^{\tt}j_{\tt}+\tu^{\tio}j_{\tio},
\end{align}
where
\begin{align}
&j_{\tn}=\tio\times\pt_{\tio}\tn+\tt\times\pt_{\tt}\tn,\,\,\,\,j_{\tt}=\tio\times\pt_{\tio}\tt+\tt\times\pt_{\tt}\tt+\tn\times\pt_{\tn}\tt,\\
&j_{\tio}=\tio\times\pt_{\tio}\tio+\tt\times\pt_{\tt}\tio+\tn\times\pt_{\tn}\tio,
\end{align}
and
\begin{align}
J=(\tt,\tio,\tn)=
\left(
  \begin{array}{ccc}
    \tt^1 & \tio^1 & \tn^1 \\
    \tt^2 & \tio^2 & \tn^2 \\
    \tt^3 & \tio^3 & \tn^3 \\
  \end{array}
\right).
\end{align}

We claim that $j_{\tn}=0$. Indeed,
\begin{align}
\pt_{\tio}\tn=&|\pt_{y_2}\Phi|^{-1}\pt_{y_2}\tn=|\pt_{y_2}\Phi|^{-1}\big((\pt_{y_2}\tn\cdot\tt)\tt+(\pt_{y_2}\tn\cdot\tio)\tio\big)\notag\\
=&|\pt_{y_2}\Phi|^{-1}\big(|\pt_{y_1}\Phi|^{-1}(\pt_{y_2}\tn\cdot\pt_{y_1}\Phi)\tt+|\pt_{y_2}\Phi|^{-1}(\pt_{y_2}\tn\cdot\pt_{y_2}\Phi)\tio\big),
\end{align}
which leads to
\begin{align}
\tio\times\pt_{\tio}\tn=|\pt_{y_2}\Phi|^{-1}|\pt_{y_1}\Phi|^{-1}(\pt_{y_2}\tn\cdot\pt_{y_1}\Phi)\tio\times\tt.
\end{align}
Similarly, we have
\begin{align}
\tt\times\pt_{\tt}\tn=|\pt_{y_2}\Phi|^{-1}|\pt_{y_1}\Phi|^{-1}(\pt_{y_1}\tn\cdot\pt_{y_2}\Phi)\tt\times\tio.
\end{align}
Noting that $\pt_{y_2}\tn\cdot\pt_{y_1}\Phi$ and $\pt_{y_1}\tn\cdot\pt_{y_2}\Phi$ are the same coefficient of the second fundamental form of $\Phi$, we get
\begin{align}
j_{\tn}=|\pt_{y_2}\Phi|^{-1}|\pt_{y_1}\Phi|^{-1}(\pt_{y_2}\tn\cdot\pt_{y_1}\Phi)(\tio\times\tt+\tt\times\tio)=0,
\end{align}
which also implies
\begin{align}
W(\Phi(y,z))=J(y,z)\tw(y,z)+\tu^{\tt}j_{\tt}+\tu^{\tio}j_{\tio}.\label{change}
\end{align}
Thus, $\tw(y,z)$ can be expressed as follows: for example,
\begin{align}
\tw^{\tt,\tio}(y,z)=\tn\cdot W(\Phi(y,z))-\big((\tt\times\tio)\cdot\tn\big)\big[(\tio\cdot\pt_{\tt}\tt)\tu^{\tt}-(\tt\cdot\pt_{\tio}\tio)\tu^{\tio}\big].\label{wtl}
\end{align}

Then, since
\begin{align}
\lap \varphi=(\pt^2_{\tt}+\pt^2_{\tio}+\pt^2_{\tn})\varphi+(\tt\cdot\pt_{\tt}\tn+\tio\cdot\pt_{\tio}\tn)\pt_{\tn}\varphi+O(1)|\tnabla_y \varphi|\,\,\,\,\,\,\mathrm{in}\,\,\,\Omega\cap Q_m   \label{lap}
\end{align}
for any smooth function $\varphi$, we can obtain from \eqref{change} and $\eqref{eq4}$ dot-multiplied by $\tt$ that
\begin{equation}
\left.
\begin{cases}
\big((\pt_{y_1}\Phi)^{-2}\pt^2_{y_1}+(\pt_{y_2}\Phi)^{-2}\pt^2_{y_2}+\pt^2_z\big)(\tpsi\tw^{\tio,\tn})\\
=\nabla_y\cdot\big(O(1)(|G|+|\tnabla \tu|)\big)+O(1)(|\tnabla \tu|+|\tu|+|G|)\,\,\,\,\,\,\,\mathrm{in}\,\,\Omega\cap Q_m,\\
\tpsi\tw^{\tio,\tn}=O(1)\tu \,\,\,\,\,\mathrm{on}\,\,\pt\Omega\cap Q_m,
\end{cases}\label{eq5}
\right.
\end{equation}
where $\tpsi(y,z)=\psi(\Phi(y,z))$.

Therefore, by an analogous argument as in the proof of the Lemma \ref{exterior}, we can obtain that
\begin{align}
&\|\tnabla(\tpsi\tw^{\tio,\tn})\|_{L^p(\Phi^{-1}(\Omega\cap Q))}\notag\\
\leq& C_p(\|\tg\|_{L^p\big(\Phi^{-1}(\Omega\cap Q)\big)}+\|\tnabla \tu\|_{L^p(\Phi^{-1}\big(\Omega\cap Q)\big)}+\|\tu\|_{L^6(\Phi^{-1}\big(\Omega\cap Q)\big)}). \label{together}
\end{align}

A similar argument applies to $\tpsi\tw^{\tn,\tt}$.

To obtain the estimate of $\tpsi\tw^{\tt,\tio}$, we get from $\eqref{wtl}$, $\eqref{lap}$ and $\eqref{eq4}$ dot-multiplied by $\tn$ that
\begin{align}
&\big((\pt_{y_1}\Phi)^{-2}\pt^2_{y_1}+(\pt_{y_2}\Phi)^{-2}\pt^2_{y_2}\big)(\tpsi\tw^{\tt,\tio})\notag\\
=&\pt_{\tt}\tg^{\tio}-\pt_{\tio}\tg^{\tt}-\pt^2_{\tn}(\tpsi\tw^{\tt,\tio})
-\pt^2_{\tn}\big\{\tpsi\big((\tt\times\tio)\cdot\tn\big)\big[(\tio\cdot\pt_{\tt}\tt)\tu^{\tt}-(\tt\cdot\pt_{\tio}\tio)\tu^{\tio}\big]\big\}    \notag\\
&+\tpsi(\tt\cdot\pt_{\tt}\tn+\tio\cdot\pt_{\tio}\tn)\pt_{\tn}\tw^{\tt,\tio}+\tnabla_y\cdot(O(1)|\tnabla u|)+O(1)(|G|+|\tnabla u|+|\tu|), \label{since0}
\end{align}
where we have used the fact that
\begin{align}
\tn\cdot\pt^2_{\tn}W=\pt^2_{\tn}(\tn\cdot W).
\end{align}
In order to estimate the terms on the right hand side of \eqref{since0} which may contain derivatives in the direction $z$, we rewrite $\pt_{\tn}\tw^{\tt,\tio}$, $\pt_{\tn}\tu^{\tt}$ and $\pt_{\tn}\tu^{\tio}$ as follows:
\begin{align}
\pt_{\tn}\tu^{\tt}=&\tw^{\tn,\tt}+\pt_{\tt}\tu^{\tn},\,\,\,\pt_{\tn}\tu^{\tio}=-\tw^{\tio,\tn}+\pt_{\tio}\tu^{\tn},\label{since1}\\
\pt_{\tn}\tw^{\tt,\tio}=&-\pt_{\tio}\tw^{\tn,\tt}-\pt_{\tt}\tw^{\tio,\tn}+(\pt_{\tn}\pt_{\tt}-\pt_{\tt}\pt_{\tn})\tu^{\tio}\notag\\
&-(\pt_{\tn}\pt_{\tio}-\pt_{\tio}\pt_{\tn})\tu^{\tt}+(\pt_{\tt}\pt_{\tio}-\pt_{\tio}\pt_{\tt})\tu^{\tn}. \label{since2}
\end{align}
Hence, we can obtain from \eqref{since0}, \eqref{since1} and \eqref{since2} that
\begin{align}
&\big((\pt_{y_1}\Phi)^{-2}\pt^2_{y_1}+(\pt_{y_2}\Phi)^{-2}\pt^2_{y_2}\big)(\tpsi\tw^{\tt,\tio})\notag\\
=&\tnabla_y\cdot\big(O(1)(|G|+|\tnabla u|)\big)+O(1)(|\tnabla\tw^{\tio,\tn}|+|\tnabla\tw^{\tn,\tt}|+|G|+|\tnabla u|+|u|),
\end{align}
where we have used the fact that $\pt_{\tn}\pt_{\tt}=\pt_{\tt}\pt_{\tn}+(\pt_{\tn}\pt_{\tt}-\pt_{\tt}\pt_{\tn})$ to deal with the terms which contain second order derivatives in the direction $z$.

Thus, for any fixed $z^{\prime}$, we can estimate $\|\tnabla_y(\tpsi\tw^{\tt,\tio})\|_{L^p\big(\Phi^{-1}(\Omega\cap Q)\cap \{z=z^{\prime}\}\big)}$. Integrating this bound with respect to $z$, and applying \eqref{together}, we obtain that $\|\tnabla_y(\tpsi\tw^{\tt,\tio})\|_{L^p\big(\Phi^{-1}(\Omega\cap Q)\big)}$ is bounded by the right hand side of \eqref{together}. The bound for $\|\pt_z(\tpsi\tw^{\tt,\tio})\|_{L^p\big(\Phi^{-1}(\Omega\cap Q)\big)}$ follows from \eqref{since2}.

Therefore, we have
\begin{align}
&\|\tnabla(\tpsi\tw)\|_{L^p\big(\Phi^{-1}(\Omega\cap Q)\big)}\notag\\
\leq& C_p(\|\tg\|_{L^p\big(\Phi^{-1}(\Omega\cap Q)\big)}+\|\tnabla \tu\|_{L^p(\Phi^{-1}\big(\Omega\cap Q)\big)}+\|\tu\|_{L^6(\Phi^{-1}\big(\Omega\cap Q)\big)}).\label{norm}
\end{align}
Note that the $L^p$-norm is an equivalent norm under the coordinate transformation \eqref{coordinate}. Thus, \eqref{norm}, together with \eqref{change} and Sobolev's inequality, implies
\begin{align}
\|\nabla(\psi_m W)\|_{L^p}\leq& C_p(\|G\|_{L^p}+\|\nabla u\|_{L^p}+\|\nabla u\|_{L^2})\label{mW}
\end{align}
for $1\leq m\leq n$.

Similarly, the bound for $\|\nabla(\psi_0 W)\|_{L^p}$ can be obtained by an analogous argument as in the proof of the Lemma \ref{exterior} such that \eqref{mW} holds also for $m=0$.

Moreover, $F$ satisfies
\begin{align}
\nabla F=G+\mu\nabla\times W,
\end{align}
which, together with \eqref{mW}, gives \eqref{FW2}.

In addition, it follows from \eqref{FL2}--\eqref{FW2} and Sobolev's inequality that
\begin{align}
&\|F\|_{L^p}\leq C_p(\|F\|_{L^2}+\|\nabla F\|_{L^2})\leq C(\|G\|_{L^2}+\|\nabla u\|_{L^2}+\|P(\r)\|_{L^2})
\end{align}
and
\begin{align}
&\|W\|_{L^p}\leq C_p(\|\nabla W\|_{L^2}+\|\nabla u\|_{L^2})\leq C_p(\|G\|_{L^2}+\|\nabla u\|_{L^2}),
\end{align}
the combination of which leads to \eqref{FW3}.

\eqref{FW4} is a direct result from \eqref{FW3} and \eqref{FW1}.

The proof is completed.
\end{proof}

Inspired by \cite{Hoff1995}, we now derive the preliminary $L^2$-bounds for $\nabla u$ and $\r\ud$ as follows:
\begin{lemma}\label{Lemma1}
Let $(\r,u)$ be a smooth solution to \eqref{eqb} on $\Omega\times(0,T]$ with $\sup_{\Omega\times(0,T]}\r\leq 2\br$. Suppose that $\o\in W^{4,1}(0,\iy)$ and $m_0\leq\ep^3$. Then, there exists a positive constant $C$ such that
\begin{align}
&\big(\mu\eta\|\D(u)\|^2_{L^2}+\frac{\l}{2}\eta\|\div u\|^2_{L^2}\big)_t+\mu\big(\eta|\sqrt{\a}\ud|^2_{L^2(\pt\Omega)}\big)_t+\frac{\eta}{2}\|\sqrt{\r}u\|^2_{L^2}\notag\\
\leq &\big(\eta\int_\Omega P(\r)\div udx\big)_t+\mu\big(\eta\int_{\Omega}\lap b\cdot udx\big)_t+C|\eta^\prime|(\ep^{\frac{1}{3}}+\|\nabla u\|^2_{L^2})\notag\\
&+C\eta\big(\|\nabla u\|^2_{L^2}+\|\nabla u\|^3_{L^2}+\|\nabla u\|^6_{L^2}+\ep^{\frac{1}{3}}\big),\label{est1}
\end{align}
and
\begin{align}
&\big(\eta\|\sqrt{\r}\ud\|^2_{L^2}\big)_t+\eta\|\nabla\ud\|^2_{L^2}+\eta|\sqrt{\a}\ud|^2_{L^2(\pt\Omega)} \notag\\
\leq& C\eta(|\o|+|\frac{d\o}{dt}|+|\frac{d^2\o}{dt^2}|)\|\sqrt{\r}\ud\|^2_{L^2}+C|\eta^\prime|\|\sqrt{\r}\ud\|^2_{L^2}\notag\\
&+C\eta\|\nabla u\|_{L^2}\|\r\ud\|^3_{L^2}
+C\eta\big(\|\nabla u\|^2_{L^2}+\|\nabla u\|^4_{L^2}+\ep^{\frac{1}{3}}\big),\label{est2}
\end{align}
where $\eta=\eta(t)\geq 0$ is a piecewise smooth function.
\end{lemma}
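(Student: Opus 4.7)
The plan is to derive both estimates by variants of the Hoff energy method \cite{Hoff1995}, testing the momentum equation against the modified material derivative $\ud$ rather than $u_t$; the correction term $h$ built into $\ud$ was chosen precisely so that $\ud\cdot\n=0$ on $\pt\Omega$ (cf.\ \eqref{udot}), which allows one to discard otherwise uncontrolled boundary contributions from the non-flat geometry.

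For estimate \eqref{est1}, I would rewrite the momentum equation $(\ref{eqb})_2$ in the form
\begin{equation*}
2\mu\div\D(u)+\l\nabla\div u=\r\ud-\r h+\r u\cdot\nabla b+\r\vo\times u+\nabla P(\r)-\r F(b)-\mu\lap b,
\end{equation*}
take the $L^2$-inner product with $\ud$, and multiply by $\eta(t)$. The viscous terms, after integration by parts and using the Navier-slip condition $(\D(u)\n)_\tau=-\a u_\tau$ together with $\ud\cdot\n=0$, produce the time derivatives of $\mu\eta\|\D(u)\|^2_{L^2}$, $\tfrac{\l}{2}\eta\|\div u\|^2_{L^2}$, and $\mu\eta|\sqrt{\a}\ud|^2_{L^2(\pt\Omega)}$; when $\eta'(t)\neq 0$, differentiation of the $\eta$-factor releases the $|\eta'|\bigl(\ep^{1/3}+\|\nabla u\|^2_{L^2}\bigr)$ error. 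The pressure term yields $\bigl(\eta\int_\Omega P\div u\,dx\bigr)_t$ after applying $(\ref{eqb})_1$, while the $\mu\lap b$ source contributes $\mu\bigl(\eta\int_\Omega\lap b\cdot u\,dx\bigr)_t$ after a further integration by parts. The remaining contributions — the diagonal $\r|\ud|^2$ term, the $\r h\cdot\ud$, rotational $\r\vo\times u\cdot\ud$, convective $\r u\cdot\nabla b\cdot\ud$, and source $\r F(b)\cdot\ud$ pieces — are controlled using the pointwise bounds \eqref{be0}--\eqref{be2x} on $b$, the bound $\|\o\|_{W^{k,\iy}}\leq C$ from \eqref{wiy}, the energy estimate \eqref{energy} (providing the factor $\ep^{1/3}$), Lemma \ref{sob} and Lemma \ref{korn}.

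For estimate \eqref{est2}, I would differentiate the momentum equation in $t$, combine with the continuity equation to obtain an evolution equation for $\r\ud$, and then test against $\ud$ multiplied by $\eta(t)$. Using the identities of Lemma \ref{calculation} — in particular \eqref{udot} for the boundary behaviour and \eqref{eqh} together with \eqref{I1}--\eqref{I4} for the evolution of $h$ — the viscous terms produce $\eta\|\nabla\ud\|^2_{L^2}$ (via Korn) and the boundary dissipation $\eta|\sqrt{\a}\ud|^2_{L^2(\pt\Omega)}$, and the $\eta$-factor contributes $|\eta'|\|\sqrt{\r}\ud\|^2_{L^2}$. Commutator terms $[(\pt_t+f\cdot\nabla),\div\D]$, the time derivatives of the rotating contributions $\vo\times u$, $u\cdot\nabla b$, $F(b)$, $\lap b$, and the $h$-source terms from \eqref{eqh} are then bounded on the right-hand side using \eqref{be0}--\eqref{be2x}, Lemma \ref{FW} for $L^p$ control of $F$, $W$ and $\nabla u$, and Young's inequality; the cubic $\|\nabla u\|_{L^2}\|\r\ud\|^3_{L^2}$ arises from estimating $\int_\Omega\r|\nabla u|\,|\ud|^3\,dx$ via $L^2$--$L^6$ interpolation and Sobolev embedding applied to $\ud$ (using $\ud\cdot\n=0$).

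The main obstacle will be the bookkeeping of boundary integrals and commutators produced by the non-flat, non-constant frame on $\pt\Omega$: every integration by parts against $\ud$ generates boundary contributions that must either cancel via the Navier-slip identity $(\D(u)\n)_\tau=-\a u_\tau$ and $\ud\cdot\n=0$, or be absorbed through the $h$-correction whose evolution \eqref{eqh}--\eqref{I4} was engineered for exactly this role. A second delicate point is that the rotational term $(\vo\times x)\cdot\nabla$ inside $f$ grows linearly at infinity, so every integration by parts has to exploit $\div(\vo\times x)=0$ (as in the proof of Lemma \ref{Energy}), while the decay of $\nabla^\b b$ at infinity built into the cutoff $\vp$ — quantified by \eqref{be2x} — ensures that all error contributions remain finite and of size $O(\ep^{1/3})$ under $m_0\leq\ep^3$.
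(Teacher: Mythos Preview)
Your plan is essentially the paper's proof: multiply $(\ref{eqb})_2$ by $\eta\ud$ and integrate for \eqref{est1}; apply $\eta\ud^j[\partial_t+\div(f\cdot)]$ to $(\ref{eqb})_2^j$ for \eqref{est2}, using $\ud\cdot\n=0$, the Navier-slip identity, and the $h$-equation \eqref{eqh} to close the boundary integrals.

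One point to correct: the cubic $\|\nabla u\|_{L^2}\|\r\ud\|^3_{L^2}$ in \eqref{est2} does \emph{not} come from a term of the form $\int_\Omega\r|\nabla u|\,|\ud|^3\,dx$ --- no such term appears, since after testing with $\ud$ the material derivative enters at most quadratically. In the paper the commutators $[\partial_t+\div(f\cdot),\,\div\D]$, $[\partial_t+\div(f\cdot),\,\nabla\div]$ and the tangential boundary residues all throw off errors of size $\|\nabla u\|^4_{L^4}$; it is the interpolation
\[
\|\nabla u\|^4_{L^4}\leq \|\nabla u\|_{L^2}\|\nabla u\|^3_{L^6}
\]
combined with Lemma~\ref{FW} (which gives $\|\nabla u\|_{L^6}\leq C\|G\|_{L^2}+\cdots\leq C\|\sqrt{\r}\ud\|_{L^2}+\cdots$) that produces the cubic term. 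The same mechanism, now through $\|\nabla u\|^3_{L^3}\leq\|\nabla u\|_{L^2}^{3/2}\|\nabla u\|_{L^6}^{3/2}$, is what generates the $\|\nabla u\|^3_{L^2}+\|\nabla u\|^6_{L^2}$ contribution on the right of \eqref{est1}, so make sure you invoke Lemma~\ref{FW} at that stage as well rather than only in the second estimate.
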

\begin{proof}
Multiplying $\eqref{eqb}_2$ by $\eta\ud$, we obtain, after integrating the resulting equality over $\Omega$, that
\begin{align}
\int_{\Omega}\eta\r|\ud|^2dx=&\int_{\Omega}\big(-\eta\nabla P(\r)\cdot \ud+2\mu\eta\div\D(u)\cdot\ud+\l\eta\nabla\div u\cdot\ud\notag\\
&-\r (u\cdot\nabla b)\cdot\ud-\r\vo\times u\cdot\ud+\r F(b)\cdot\ud+\mu\lap b\cdot\ud-\r h\cdot\ud\big)dx\notag\\
\triangleq&\sum^8_{i=1}J_i.\label{J0}
\end{align}

We now use the method of integration by parts to estimate each term on the right hand side of \eqref{J0} as follows:
\begin{align}
J_1=&-\int_{\Omega}\eta\nabla P(\r)\cdot\ud dx\notag\\
=&\big(\int_{\Omega}\eta P(\r)\div udx\big)_t-\eta^\prime\int_{\Omega}P(\r)\div udx-\eta\int_{\Omega}P^\prime(\r)\r_t\div udx\notag\\
&-\eta\int_{\Omega}(f\cdot\nabla u)\cdot\nabla P(\r) dx+\int_{\Omega}\eta P(\r)\div hdx\notag\\
=&\big(\int_{\Omega}\eta P(\r)\div udx\big)_t-\eta^\prime\int_{\Omega}P(\r)\div udx+\eta\int_{\Omega}P^\prime(\r)\r(\div u)^2dx\notag\\
&+\eta\int_{\Omega}P(\r)\big[\div(f\cdot\nabla u)-\div(f\div u)\big]dx+\eta\int_{\Omega} P(\r)\div hdx\notag\\
=&\big(\int_{\Omega}\eta P(\r)\div udx\big)_t-\eta^\prime\int_{\Omega}P(\r)\div udx+\eta\int_{\Omega}P^\prime(\r)\r(\div u)^2+P(\r)(\partial_if^j\partial_ju^i+\div h)dx\notag\\
\leq &(\int_{\Omega}\eta P(\r)\div udx)_t+C(\br)(\eta+|\eta^\prime|)\|\nabla u\|^2_{L^2}+C(\br)|\eta^\prime| m^\frac{1}{2}_0,\label{J1}
\end{align}
where we have used the equation $\eqref{eqb}_1$, \eqref{be1}, \eqref{mass}, \eqref{wiy}, \eqref{udot} and the fact that
\begin{align}
\|u\|_{H^1(\mathrm{supp}h)}\leq C\|\nabla u\|_{L^2(\mathrm{supp}\tn)}\leq C\|\nabla u\|_{L^2}.
\end{align}

Integration by parts leads to
\begin{align}
J_2=&2\mu\int_{\Omega}\eta\div\D(u)\cdot\ud dx\notag\\
=&2\mu\int_{\pt\Omega}\eta\ud\cdot(\D(u)\n)d\Gamma-\mu\int_{\Omega}\eta(\pt_i u^j+\pt_j u^i)\big(\pt_i u^j_t+\pt_i(f^k\pt_k u^j)+\pt_ih^j\big)dx\notag\\
=&-\mu\big(\eta\int_{\pt\Omega}\a|u|^2d\Gamma\big)_t+\mu\eta^\prime\int_{\pt\Omega}\a|u|^2d\Gamma
+2\mu\eta\int_{\pt\Omega}\a (u\cdot\nabla u)\cdot ud\Gamma\notag\\
&-\mu(\eta\|\D(u)\|^2_{L^2})_t+\mu\eta^\prime\|\D(u)\|^2_{L^2}-\mu\eta\int_{\Omega}(\pt_i u^j+\pt_j u^i)\big(\pt_if^k\pt_k u^j+\pt_ih^j\big)dx\notag\\
&-\mu\eta\int_{\pt\Omega}f^k\n^k(\pt_i u^j+\pt_j u^i)\pt_iu^jd\Gamma+\frac{1}{2}\mu\eta\int_{\Omega}(\pt_i u^j+\pt_j u^i)\pt_kf^k\pt_iu^jdx\notag\\
\leq&-\mu\big(\eta\|\D(u)\|^2_{L^2}\big)_t-\mu\big(\eta|\sqrt{\a}u|^2_{L^2(\pt\Omega)}\big)_t+C(\eta+|\eta^\prime|)\|\nabla u\|^2_{L^2}+C\eta\|\nabla u\|^3_{L^3},\label{J2}
\end{align}
where we have used the inequality obtained from that the Fourier transform and the trace theorem (see \cite{Evans}), and Sobolev's inequality and Young's inequality to deduce that
\begin{align}
&\mu\eta^\prime\int_{\pt\Omega}\a|u|^2d\Gamma+2\mu\eta\int_{\pt\Omega}\a (u\cdot\nabla u)\cdot ud\Gamma\notag\\
=&\mu\eta^\prime\int_{\pt\Omega}\a|u|^2d\Gamma
+2\mu\eta\sum^n_{i=1}\int_{\pt\Omega\cap Q_i}\a \psi_i\big[(u\cdot\t_i)(\pt_{\tau_i}u\cdot u)+(u\cdot\io_i)(\pt_{\io_i}u\cdot u)\big]d\Gamma\notag\\
\leq& C|\eta^\prime|\|u\|^2_{H^{\frac{1}{2}}(\pt\Omega)}+C\eta\sum^n_{i=1}\|u\|_{H^{\frac{1}{2}}(\pt\Omega\cap Q_i)}\|u\cdot u\|_{H^{\frac{1}{2}}(\pt\Omega\cap Q_i)}\notag\\
\leq& C|\eta^\prime|\|u\|^2_{H^1(\Omega_5)}+C\eta\|u\|_{H^1(\Omega_5)}\|u\cdot u\|_{H^1(\Omega_5)}\notag\\
\leq& C|\eta^\prime|\|\nabla u\|^2_{L^2}+C\eta\|\nabla u\|_{L^2}(\|u\|^2_{L^4(\Omega_5)}+\|u\|_{L^6(\Omega_5)}\|\nabla u\|_{L^3(\Omega_5)})\notag\\
\leq& C|\eta^\prime|\|\nabla u\|^2_{L^2}+C\eta\|\nabla u\|^3_{L^2}+C\eta\|\nabla u\|^3_{L^3}.
\end{align}
Similarly, we have
\begin{align}
J_3=&\l\int_{\Omega}\eta\nabla\div u\cdot\ud dx\notag\\
=&-\frac{\l}{2}(\eta\|\div u\|^2_{L^2})_t+\frac{\l\eta^\prime}{2}\|\div u\|^2_{L^2}-\l\eta\int_{\Omega}\div u\big(\div(f\cdot\nabla u)+\div h\big)dx\notag\\
\leq&-\frac{\l}{2}(\eta\|\div u\|^2_{L^2})_t+C(\eta+|\eta^\prime|)\|\nabla u\|^2_{L^2}+C\eta\|\nabla u\|^3_{L^3}.\label{J3}
\end{align}
We apply Sobolev's inequality and Young's inequality and use \eqref{energy} to show that
\begin{align}
J_4+J_5+J_6+J_8
\leq& \eta\|\nabla b\|_{L^\iy}\|\sqrt{\r}u\|_{L^2}\|\sqrt{\r}\ud\|_{L^2}
+\eta|\o|\|\sqrt{\r}u\|_{L^2}\|\sqrt{\r}\ud\|_{L^2}\notag\\
&+\eta\|\r^{\frac{1}{2}}\|_{L^2}\|F(b)\|_{L^\iy}\|\sqrt{\r}\ud\|_{L^2}+C\eta\|\r^{\frac{1}{2}}\|_{L^\iy}\|u\|^2_{L^4(\mathrm{supp}\tn)}\|\sqrt{\r}\ud\|_{L^2}\notag\\
\leq& \frac{1}{4}\eta\|\sqrt{\r}\ud\|^2_{L^2}+C(\br)\eta\|\nabla u\|^4_{L^2}+C\eta\ep^{\frac{1}{3}}+C\eta m_0\ep^{-2}(|\o|+|\frac{d\o}{dt}|),
\end{align}
and
\begin{align}
J_7=&\mu\int_{\Omega}(\eta\lap b\cdot u)_t-\eta^\prime\lap b\cdot u-\eta\lap b_t\cdot u+\eta \big((u\cdot\nabla u)+h\big)\cdot\lap b
+\eta \big((b-\vo\times x)\cdot\nabla u\big)\cdot\lap b dx\notag\\
=&\mu\int_{\Omega}(\eta\lap b\cdot u)_t-\eta^\prime\lap b\cdot u-\eta\lap b_t\cdot u+\eta \big((u\cdot\nabla u)+h\big)\cdot \lap b
-\eta(b-\vo\times x)^i u^j\pt_i\lap b^j dx\notag\\
\leq&\mu\big(\eta\int_{\Omega}\lap b\cdot udx\big)_t+C|\eta^{\prime}|\|\lap b\|_{L^\frac{6}{5}}\|u\|_{L^6}+C\eta\|\lap b_t\|_{L^\frac{6}{5}}\|u\|_{L^6}\notag\\
&+C\eta\|\lap b\|_{L^3}\|u\|_{L^6}\|\nabla u\|_{L^2}+C\eta\|\lap b\|_{L^\iy}\|u\|^2_{L^2(\mathrm{supp}\tn)}
+C\eta\|(|b|+|\cdot|)\nabla^3 b\|_{L^{\frac{6}{5}}}\|u\|_{L^6}\notag\\
\leq&\mu\big(\eta\int_{\Omega}\lap b\cdot udx\big)_t+C|\eta^{\prime}|(\|\nabla u\|^2_{L^2}
+\ep^{\frac{1}{3}}|\o|)+C\eta\|\nabla u\|^2_{L^2}+C\eta\ep^{\frac{1}{3}}(|\o|+|\frac{d\o}{dt}|),\label{J7}
\end{align}
where we have used \eqref{be0}--\eqref{mass}, \eqref{wiy} and the method of integration by parts.

Moreover, it follows from \eqref{G}, \eqref{be0}, \eqref{be1}, \eqref{be2}, \eqref{mass} and Holder's inequality and Sobolev's inequality that
\begin{align}
\|G\|_{L^2}\leq&C\big(\|\r^{\frac{1}{2}}\|_{L^2}\|\sqrt{\r}\ud\|_{L^2}+\|\r^{\frac{1}{2}}\|_{L^\iy}\|\nabla b\|_{L^\iy}\|\sqrt{\r}u\|_{L^2}
+|\o|\|\sqrt{\r}u\|_{L^2}\notag\\
&+\|\r\|_{L^2}\|F(b)\|_{L^\iy}+\|\lap b\|_{L^2}+\|u\|^2_{L^6}\big)\notag\\
\leq&C\big(\|\sqrt{\r}\ud\|_{L^2}+|\o|\|\sqrt{\r}u\|_{L^2}+\ep^{\frac{1}{2}}(|\o|+|\frac{d\o}{dt}|)+\|\nabla u\|^2_{L^2}\big),\label{estG}
\end{align}
which, together with \eqref{FW4}, \eqref{energy} and \eqref{wiy}, and Sobolev's inequality, Holder's inequality and Young's inequality, gives that
\begin{align}
\|\nabla u\|^3_{L^3}\leq& \|\nabla u\|^\frac{3}{2}_{L^2}\|\nabla u\|^\frac{3}{2}_{L^6}
\leq \|\nabla u\|^\frac{3}{2}_{L^2}(\|G\|_{L^2}+\|\nabla u\|_{L^2}+\|P(\r)\|_{L^2}+\|P(\r)\|_{L^6})^{\frac{3}{2}}\notag\\
\leq&\delta\|\sqrt{\r}\ud\|^2_{L^2}+C\big(\br,\delta)\big(\|\nabla u\|^3_{L^2}+\|\nabla u\|^6_{L^2}+\ep^{\frac{1}{2}}\big).\label{J4}
\end{align}

Thus, substituting \eqref{J1}--\eqref{J7} into \eqref{J0}, using \eqref{J4} and choosing $\delta$ small enough, we can obtain \eqref{est1}.

Then, operating $\eta\ud^j[\frac{\pt}{\pt_t}+\div(f\cdot)]$ on $\eqref{eqb}^j_2$ and then summing with respect to $j$, we obtain, after integrating the resulting equation over $\Omega$, that
\begin{align}
&\big(\frac{\eta}{2}\int_\Omega\r|\ud|^2dx\big)_t-\frac{\eta^\prime}{2}\int_{\Omega}\r|\ud|^2dx\notag\\
=&-\eta\int_\Omega\ud^j[\pt_jP_t+\div(f\pt_jP(\r))]dx+2\mu\eta\int_\Omega\ud^j\big[\big(\div\D(u)\big)^j_t+\div\big(f(\div\D(u))^j\big)\big]dx\notag\\
&+\l\eta\int_\Omega\ud^j\big[\pt_j\pt_t(\div u)+\div(f\pt_j(\div u))\big]dx+\eta\int_{\Omega}\ud^j[(\r(\vo\times u)^j)_t+\div(f\r(\vo\times u)^j) dx\notag\\
&+\eta\int_\Omega\ud^j[(\r(u\cdot\nabla b)^j)_t+\div(f\r(u\cdot\nabla b)^j)]dx+\eta\int_\Omega\ud^j[(\r F(b)^j)_t+\div(f\r F(b)^j)]dx\notag\\
&+\mu\eta\int_\Omega\ud^j[\lap b^j_t+\div(f\lap b^j)]dx+\eta\int_\Omega\ud^j[(\r h^j)_t+\div(f\r h^j)]dx\notag\\
\triangleq&\sum^8_{i=1}M_i.\label{M0}
\end{align}
Similarly, we use the method of integrating by parts and use the equation $\eqref{eqb}_1$ and Young's inequality to estimate $M_i$ $(1\leq i\leq8)$ as follows:
\begin{align}
M_1=&-\eta\int_\Omega\ud^j[\pt_jP_t+\div(f\pt_jP(\r))]dx=\eta\int_\Omega P^\prime(\r)\r_t\pt_j\ud^j+f^k\pt_k\ud^j\pt_jPdx\notag\\
=&\eta\int_\Omega-\pt_j\ud^j P^\prime(\r)(\r\div u+f^k\pt_k\r)-P(\r)\pt_j(f^k\pt_k\ud^j)dx\notag\\
=&\eta\int_\Omega-P^\prime(\r)\r\pt_j\ud^j\div u+P(\r)\pt_ku^k\pt_j\ud^j-P(\r)\pt_jf^k\pt_k\ud^jdx\notag\\
\leq& \delta\eta\|\nabla\ud \|^2_{L^2}+C\frac{1}{\delta}\eta\|\nabla u\|^2_{L^2}+C\frac{1}{\delta}\eta m_0|\o|,\label{M1}
\end{align}
for any $\delta>0$, and
\begin{align}
M_2=&2\mu\eta\int_\Omega\ud^j\big[\big(\div\D(u)\big)^j_t+\div\big(f(\div\D(u))^j\big)\big]dx\notag\\
=&\mu\eta\int_{\pt\Omega}\ud^j(\pt_iu^j_t+\pt_ju^i_t)\n^id\Gamma-\mu\eta\int_\Omega\pt_i\ud^j(\pt_iu^j_t+\pt_ju^i_t)dx\notag\\
&-\mu\eta\int_\Omega\pt_k\ud^j\big(f^k(\pt_i\pt_iu^j+\pt_j\pt_iu^i)\big)dx\notag\\
=&-\mu\eta\sum^n_{i=1}\int_{\pt\Omega\cap Q_i}\a\psi_i\ud\cdot\big[\ud-h-\big((u\cdot\t_i)\pt_{\tau_i}u+(u\cdot\io_i)\pt_{\iota_i}u\big)\big]d\Gamma\notag\\
&-\mu\eta\int_\Omega\pt_i\ud^j(\pt_i\ud^j+\pt_j\ud^i)dx+\mu\eta\int_\Omega\pt_i\ud^j\big(\pt_i(f^k\pt_ku^j)+\pt_j(f^k\pt_ku^i)\big)dx\notag\\
&-\mu\eta\int_\Omega\pt_k\ud^jf^k(\pt_i\pt_iu^j+\pt_j\pt_iu^i)dx+\mu\eta\int_\Omega\pt_i\ud^j\big(\pt_ih^j+\pt_jh^i\big)dx\notag\\
\triangleq& \sum^5_{i=1}M_{2i}.\label{M20}
\end{align}
where we have used $\eqref{eqb}_3$, \eqref{ntau} and \eqref{udot}.

Invoking the Fourier transform and the trace theorem, we have
\begin{align}
M_{21}=&-\mu\eta\int_{\pt\Omega}\a\ud\cdot\big[\ud-h-\sum^n_{i=1}\psi_i\big((u\cdot\t_i)\pt_{\tau_i}u+(u\cdot\io_i)\pt_{\iota_i}u\big)\big]d\Gamma\notag\\
\leq&-\mu\eta\int_{\pt\Omega}\a|\ud|^2d\Gamma+C\eta\|\ud\|_{H^{\frac{1}{2}}(\pt\Omega)}\||u|^2\|_{H^{\frac{1}{2}}(\pt\Omega)}\notag\\
\leq&-\mu\eta\int_{\pt\Omega}\a|\ud|^2d\Gamma+C\eta\|\ud\|_{H^1(\Omega_5)}\||u|^2\|_{H^1(\Omega_5)}\notag\\
\leq&-\mu\eta\int_{\pt\Omega}\a|\ud|^2d\Gamma+\delta\eta\|\nabla\ud\|^2_{L^2}+C\frac{1}{\delta}\eta(\|\nabla u\|^4_{L^2}+\|\nabla u\|^4_{L^4})\label{M21}
\end{align}
for any $\delta>0$, where we have used \eqref{udot}, Sobolev's inequality and Young's inequality.

Integration by parts leads to
\begin{align}
M_{22}=&\mu\eta\int_\Omega\pt_i\ud^j\big(\pt_i(f^k\pt_ku^j)+\pt_j(f^k\pt_ku^i)\big)dx\notag\\
=&\mu\eta\int_\Omega\pt_i\ud^j\big(\pt_if^k\pt_ku^j+\pt_jf^k\pt_ku^i\big)dx+\mu\eta\int_\Omega f^k\pt_i\ud^j\big(\pt_i\pt_ku^j+\pt_j\pt_ku^i\big)dx\notag\\
=&\mu\eta\int_\Omega\pt_i\ud^j\big(\pt_if^k\pt_ku^j+\pt_jf^k\pt_ku^i\big)dx-\mu\eta\int_\Omega\pt_k\pt_i\ud^jf^k(\pt_iu^j+\pt_ju^i)dx\notag\\
&-\mu\eta\int_\Omega\pt_i\ud^j\pt_kf^k(\pt_iu^j+\pt_ju^i)dx\notag\\
=&\mu\eta\int_\Omega\pt_i\ud^j\big(\pt_if^k\pt_ku^j+\pt_jf^k\pt_ku^i\big)dx-\mu\eta\int_{\pt_\Omega}f^k\pt_k\ud^j(\pt_iu^j+\pt_ju^i)\n^id\Gamma\notag\\
&+\mu\eta\int_\Omega\pt_k\ud^j\pt_if^k(\pt_iu^j+\pt_ju^i)dx+\mu\eta\int_\Omega\pt_k\ud^jf^k(\pt_i\pt_iu^j+\pt_i\pt_ju^i)dx\notag\\
&-\mu\eta\int_\Omega\pt_i\ud^j\pt_kf^k(\pt_iu^j+\pt_ju^i)dx\notag.\label{m22}\\
\end{align}
The second term on the right side of \eqref{m22} can be estimated as follows:
\begin{align}
&-\mu\eta\int_{\pt\Omega}f^k\pt_k\ud^j(\pt_iu^j+\pt_ju^i)\n^id\Gamma\notag\\
=&-\mu\eta\sum^n_{k=1}\int_{\pt\Omega\cap Q_k}\psi_k(u\cdot\t_k)\pt_{\tau_k}
\big((\ud\cdot\t_k)\t_k^j+(\ud\cdot\io_k)\io_k^j+(\ud\cdot\n)\n^j\big)(\pt_iu^j+\pt_ju^i)\n^id\Gamma\notag\\
&+\psi_k(u\cdot\io_k)\pt_{\iota_k}
\big((\ud\cdot\t_k)\t_k^j+(\ud\cdot\io_k)\io_k^j+(\ud\cdot\n)\n^j\big)(\pt_iu^j+\pt_ju^i)\n^id\Gamma\notag\\
=&-\mu\eta\sum^n_{k=1}\int_{\pt\Omega\cap Q_k}\psi_k\big((u\cdot\t_k)\pt_{\tau_k}(\ud\cdot\t_k)\t_k^j+(u\cdot\io_k)\pt_{\iota_k}(\ud\cdot\t_k)\t_k^j\big)
\big(\pt_nu^j-u^i\pt_j\tn^i\big)\notag\\
&+\psi_k\big((u\cdot\t_k)\pt_{\tau_k}(\ud\cdot\io_k)\io_k^j+(u\cdot\io_k)\pt_{\iota_k}(\ud\cdot\io_k)\io_k^j\big)
\big(\pt_nu^j-u^i\pt_j\tn^i\big)\notag\\
&+\psi_k(u\cdot\t_k)\big[(\ud\cdot\t_k)\pt_{\tau_k}\t_k^j+(\ud\cdot\io_k)\pt_{\tau_k}\io_k^j\big](\pt_iu^j+\pt_ju^i)\n^i\notag\\
&+\psi_k(u\cdot\io_k)\big[(\ud\cdot\t_k)\pt_{\iota_k}\t_k^j+(\ud\cdot\io_k)\pt_{\iota_k}\io_k^j\big](\pt_iu^j+\pt_ju^i)\n^id\Gamma
\end{align}
where we have used \eqref{ntau}, \eqref{udot}.
Noting that
\begin{align}
\pt_{\tau_k}\t_k^j=&(\pt_{\tau_k}\t_k)^j=(\pt_{\tau_k}\t_k\cdot\t_k)\t_k^j+(\pt_{\tau_k}\t_k\cdot\io_k)\io_k^j+(\pt_{\tau_k}\t_k\cdot\n)\n^j\notag\\
=&(\pt_{\tau_k}\t_k\cdot\io_k)\io_k^j-(\t_k\cdot\pt_{\tau_k}\n)\n^j,
\end{align}
we can apply the same method as in deducing \eqref{M21} to obtain that
\begin{align}
&-\mu\eta\int_{\pt\Omega}f^k\pt_k\ud^j(\pt_iu^j+\pt_ju^i)\n^id\Gamma\notag\\
\leq&\mu\eta\sum^n_{k=1}\int_{\pt\Omega}\psi_k\big[(u\cdot\nabla\n)\cdot\ud\big]\n^j(\pt_iu^j+\pt_ju^i)\n^id\Gamma
+C\eta\|\ud\|_{H^{\frac{1}{2}}(\pt\Omega)}\||u|^2\|_{H^{\frac{1}{2}}(\pt\Omega)}\notag\\
\leq&-2\mu\eta\int_{\pt\Omega}(\pt_nh_2\cdot\ud)d\Gamma
+\delta\eta\|\nabla\ud\|^2_{L^2}+C\frac{1}{\delta}\eta(\|\nabla u\|^4_{L^2}+\|\nabla u\|^4_{L^4})\label{m222}
\end{align}
for any $\delta>0$. Therefore, \eqref{m22} and \eqref{m222} lead to
\begin{align}
M_{22}+M_{23}\leq&-\mu\eta\int_{\pt_\Omega}(\ud\cdot\t)(\pt_nh_2\cdot\t)d\Gamma+\delta\eta\|\nabla\ud\|^2_{L^2}\notag\\
&+C\frac{1}{\delta}\eta(\|\nabla u\|^2_{L^2}+\|\nabla u\|^4_{L^2}+\|\nabla u\|^4_{L^4}).\label{M23}
\end{align}
In addition, we have
\begin{align}
M_{24}=&-\mu\eta\int_\Omega\pt_i\ud^j(\pt_i\ud^j+\pt_j\ud^i)dx=-2\mu\eta\|\D(\ud)\|^2_{L^2},\label{M24}\\
M_{25}=&\mu\eta\int_\Omega\pt_i\ud^j\big(\pt_ih^j+\pt_jh^i\big)dx\leq\delta\eta\|\nabla\ud\|^2_{L^2}+C\frac{1}{\delta}\eta(\|\nabla u\|^4_{L^2}+\|\nabla u\|^4_{L^4})\label{M25}
\end{align}
for any $\delta>0$.

Substituting \eqref{M21}, \eqref{M23}, \eqref{M24} and \eqref{M25} into \eqref{M20}, we have
\begin{align}
M_2\leq& \mu\eta\int_{\pt\Omega}(\ud\cdot\t)(\pt_nh_3\cdot\t)d\Gamma-2\mu\eta\|\D(\ud)\|^2_{L^2}-2\mu\eta\int_\Omega\ud\cdot\div\D(h)dx\notag\\
&+\delta\eta\|\nabla\ud\|^2_{L^2}+C\frac{1}{\delta}\eta(\|\nabla u\|^2_{L^2}+\|\nabla u\|^4_{L^2}+\|\nabla u\|^4_{L^4})\label{M2}
\end{align}
for any $\delta>0$.

Similarly, we have
\begin{align}
M_3=&\l\eta\int_\Omega\ud^j\big[\pt_j\pt_t(\div u)+\div(f\pt_j(\div u))\big]dx\notag\\
=&\l\eta\int_\Omega\ud^j\big[\pt_j\pt_i(\ud^i-f^k\pt_ku^i-h^i)\big]dx-\l\eta\int_\Omega\pt_k\ud^jf^k\pt_j\pt_iu^idx\notag\\
=&-\l\eta\int_\Omega(\pt_j\ud^j)^2dx+\l\eta\int_\Omega\pt_j\ud^j\pt_if^k\pt_ku^idx+\l\eta\int_\Omega\pt_j\ud^j\pt_ih^idx\notag\\
&-\l\eta\int_\Omega\pt_kf^k\ud^j\pt_j\pt_iu^idx-\l\eta\int_\Omega \pt_j\ud^j\pt_ku^k\pt_iu^jdx\notag\\
&-\l\eta\int_{\pt\Omega}f^k\pt_k\ud^j\n^j\pt_iu^id\Gamma+\l\eta\int_\Omega \pt_k\ud^j\pt_jf^k\pt_iu^jdx\notag\\
\leq&-\l\eta\|\div\ud\|^2_{L^2}-\mu\eta\int_{\pt\Omega}(\ud\cdot\t)(\pt_nh_2\cdot\t)d\Gamma+\delta\eta\|\nabla\ud\|^2_{L^2}\notag\\
&+C\frac{1}{\delta}\eta(\|\nabla u\|^2_{L^2}+\|\nabla u\|^4_{L^2}+\|\nabla u\|^4_{L^4})\label{M3}
\end{align}
for any $\delta>0$, where we have used the fact from \eqref{udot} and \eqref{divu} that
\begin{align}
&-\l\eta\int_{\pt\Omega}f^k\pt_k\ud^j\pt_iu^i\n^jd\Gamma\notag\\
=&-\l\eta\sum^n_{i=1}\int_{\pt\Omega\cap Q_i}\psi_i\big[(u\cdot\t_i)\big(\pt_{\tau_i}(\ud\cdot\n)-(\ud\cdot\pt_{\tau_i}\n)\big)
+(u\cdot\io_i)\big(\pt_{\iota_i}(\ud\cdot\n)-(\ud\cdot\pt_{\iota_i}\n)\big)\big]\div ud\Gamma\notag\\
=&-\l\eta\sum^n_{i=1}\int_{\pt\Omega\cap Q_i}\big((u\cdot\nabla\tn)\cdot\ud\big)
\psi_i\big[\pt_{\tau_i}(u\cdot\t_i)+\pt_{\iota_i}(u\cdot\io_i)+\pt_n(u\cdot\tn)\big]d\Gamma\notag\\
\leq&-\l\eta\int_{\pt\Omega}(\ud\cdot\t)( \pt_nh_2\cdot\t)d\Gamma+\delta\eta\|\nabla\ud\|^2_{L^2}
+C\frac{1}{\delta}\eta\|\nabla u\|^4_{L^2}+C\frac{1}{\delta}\eta\|\nabla u\|^4_{L^4}.
\end{align}

Moreover, we have
\begin{align}
M_4=&\eta\int_{\Omega}\ud^j[\r_t(\vo\times u)^j+\r(\frac{d\vo}{dt}\times u)^j+\r\big(\vo\times (\ud-f\cdot\nabla u-h)\big)^j]-\pt_k\ud^jf^k\r(\vo\times u)^jdx\notag\\
=&\eta\int_{\Omega}\ud^j\big[-\div(\r f)(\vo\times u)^j+\r(\frac{d\vo}{dt}\times u)^j-\r\big(\vo\times (f\cdot\nabla u+h)\big)^j\big]-\pt_k\ud^jf^k\r(\vo\times u)^jdx\notag\\
=&\eta\int_{\Omega}\ud^j\big[\r f^k(\vo\times \pt_ku)^j+\r(\frac{d\vo}{dt}\times u)^j-\r\big(\vo\times (f^k\pt_k u+h)\big)^j\big]dx\notag\\
=&\eta\int_{\Omega}\r\ud^j(\frac{d\vo}{dt}\times u)^jdx-\eta\int_{\Omega}\r\ud\cdot(\vo\times h)dx\notag\\
\leq&C\eta(|\o|+|\frac{d\o}{dt}|)\|\sqrt{\r}\ud\|^2_{L^2}+C(\br)\eta(\|\sqrt{\r}u\|^2_{L^2}+\|\nabla u\|^4_{L^2}+\|\nabla u\|^4_{L^4}),\label{M4}
\end{align}
and
\begin{align}
M_5=&\eta\int_\Omega\ud^j(\r_tu^i\pt_i b^j+\r u^i_t\pt_i b^j+\r u^i\pt_i b_t^j)-\r f^k u^i\pt_i b^j\pt_k\ud^jdx\notag\\
=&\eta\int_\Omega \r f^k\pt_k(\ud^ju^i\pt_i b^j)+\r\ud^j (\ud^i-f^k\pt_ku^i-h^i)\pt_i b^j+\r\ud^j u^i\pt_i b_t^j-\r f^k u^i\pt_i b^j\pt_k\ud^jdx\notag\\
=&\eta\int_\Omega \r f^k\ud^ju^i\pt_k\pt_i b^j+\r\ud^j\ud^i\pt_i b^j+\r\ud^j u^i\pt_i b_t^jdx-\r\ud^j\pt_i b^jh^idx\notag\\
\leq& C\eta \|\nabla b\|_{L^\iy}\|\sqrt{\r}\ud\|^2_{L^2}+C\eta(\|b\|_{L^\iy}\|\nabla^2 b\|_{L^\iy}
+\||\cdot|\nabla^2 b\|_{L^\iy}|\o|)\|\sqrt{\r}\ud\|_{L^2}\|\sqrt{\r}u\|_{L^2}\notag\\
&+C(\br)\eta\|\nabla b_t\|_{L^\iy}\|\sqrt{\r}\ud\|_{L^2}\|\sqrt{\r}u\|_{L^2}+C(\br)\eta\|\nabla^2 b\|_{L^\iy}\|\sqrt{\r}\ud\|_{L^2}\|u\|^2_{L^4(\mathrm{supp}\tn)}\notag\\
\leq&C\eta (|\o|+|\frac{d\o}{dt}|)\|\sqrt{\r}\ud\|^2_{L^2}+C\eta(\|\sqrt{\r}u\|^2_{L^2}+\|\nabla u\|^4_{L^2}+\|\nabla u\|^4_{L^4}),\label{M5}
\end{align}
and
\begin{align}
M_6=&\eta\int_\Omega\ud^j[\r_t F(b)^j+\r F(b)_t^j]-f^k\pt_k\ud^j\r F(b)^jdx\notag\\
=&\eta\int_\Omega\r f^k\ud^j\pt_kF(b)^j+\r \ud^jF(b)_t^jdx\notag\\
\leq& C\eta\|\sqrt{\r}\ud\|_{L^2}\|\sqrt{\r}u\|_{L^2}\|\nabla F(b)\|_{L^\iy}+C\eta\|\r^{\frac{1}{2}}\|_{L^2}\|\sqrt{\r}\ud\|_{L^2}\|b\|_{L^\iy}\|\nabla F(b)\|_{L^\iy}\notag\\
&+C\eta\|\r^{\frac{1}{2}}\|_{L^2}\|\sqrt{\r}\ud\|_{L^2}\||\cdot|\nabla F(b)\|_{L^\iy}+C\eta\|\r^{\frac{1}{2}}\|_{L^2}\|\sqrt{\r}\ud\|_{L^2}\|F(b)_t\|_{L^\iy}\notag\\
\leq&C\eta(\|\sqrt{\r}\ud\|^2_{L^2}+ m_0\ep^{-2})(|\o|+|\frac{d\o}{dt}|+|\frac{d^2\o}{dt^2}|)+C\eta\|\sqrt{\r}u\|^2_{L^2},\label{M6}
\end{align}
and
\begin{align}
M_7=&\mu\eta\int_\Omega\ud^j(\lap b^j_t+\pt_ku^k\lap b^j+f^k\pt_k\lap b^j)dx\notag\\
\leq&C\mu\eta\|\ud\|_{L^6}
\big(\|\lap b_t\|_{L^{\frac{6}{5}}}+\|\nabla u\|_{L^2}\|\lap b\|_{L^3}+\|u\|_{L^6}\|\nabla^3 b\|_{L^{\frac{3}{2}}}+\|(|b|+|\cdot|)\nabla^3 b\|_{L^{\frac{6}{5}}}\big)\notag\\
\leq&\delta\eta\|\nabla \ud\|^2_{L^2}+C\frac{1}{\delta}\eta\ep^{\frac{4}{3}}\|\nabla u\|^2_{L^2}+C\frac{1}{\delta}\eta\ep^{\frac{1}{3}}(|\o|+|\frac{d\o}{dt}|),\label{M7}
\end{align}
for any $\delta>0$, where we have used $\eqref{eqb}_1$, \eqref{be0}--\eqref{be2}, \eqref{wiy} and H\"{o}lder's inquality, Sobolev's inequality and Young's inequality.

We use the equation \eqref{eqh} to estimate $M_8$ as follows:
\begin{align}
M_8=&\eta\int_\Omega\ud^j(\r h^j_t+\r f^k\pt_k h^j)+[\r_t+\div(\r f)]\ud\cdot hdx\notag\\
=&(2\mu+\l)\eta\int_\Omega\ud^j\lap h^jdx+\sum^4_{i=1}\eta\int_\Omega\ud\cdot I_idx.\label{M80}
\end{align}
We obtain, after using the method of integration by parts, that
\begin{align}
&(2\mu+\l)\eta\int_\Omega\ud^j\lap h^jdx\notag\\
=&(2\mu+\l)\eta\int_{\pt\Omega}\ud\cdot\pt_n(h_1+h_2) d\Gamma-(2\mu+\l)\eta\int_\Omega\pt_i\ud^j\pt_i h^jdx\notag\\
\leq &(2\mu+\l)\eta\int_{\pt\Omega}\ud\cdot\pt_nh_2 d\Gamma
+\delta\eta\|\nabla \ud\|^2_{L^2}+C\frac{1}{\delta}\eta(\|\nabla u\|^4_{L^2}+\|\nabla u\|^4_{L^4})\label{M81}
\end{align}
for any $\delta>0$, where we have used the fact that
\begin{align}
\ud\cdot\pt_n h_1=0\,\,\,\,\,\mathrm{on}\,\,\Omega.
\end{align}

Due to \eqref{rotcal}, we can use the method of integration by parts and the trace theorem to estimate the second term on the right hand side of \eqref{M80} such that the integral on the boundary contains only lower order terms. Similar to \eqref{M21}, we have
\begin{align}
\sum^4_{i=1}\eta\int_\Omega\ud\cdot I_idx\leq \delta \|\nabla\ud\|^2_{L^2}+C\frac{1}{\delta}\eta(\|\nabla u\|^2_{L^2}+\|\nabla u\|^4_{L^2}+\|\nabla u\|^4_{L^4})\label{M82}
\end{align}
for any $\delta>0$, where we have used H\"{o}lder's inquality, Sobolev's inequality and Young's inequality.

Substituting \eqref{M81} and \eqref{M82} into \eqref{M80}, we have
\begin{align}
M_8\leq &(2\mu+\l)\eta\int_{\pt\Omega}(\ud\cdot\t)\pt_nh_2\cdot\t d\Gamma
+\delta \|\nabla\ud\|^2_{L^2}+C\frac{1}{\delta}\eta(\|\nabla u\|^2_{L^2}+\|\nabla u\|^4_{L^2}+\|\nabla u\|^4_{L^4})\label{M8}
\end{align}
for any $\delta>0$.

Moreover, similarly to \eqref{J4}, it holds that
\begin{align}
\|\nabla u\|^4_{L^4}\leq& \|\nabla u\|_{L^2}\|\nabla u\|^3_{L^6}\leq\|\nabla u\|_{L^2}(\|G\|_{L^2}+\|\nabla u\|_{L^2}+\|P(\r)\|_{L^2}+\|P(\r)\|_{L^6})^3\notag\\
\leq &C\big(\|\nabla u\|_{L^2}\|\r\ud\|^3_{L^2}+\|\nabla u\|^2_{L^2}+\|\nabla u\|^4_{L^2}+\ep\big).\label{M9}
\end{align}

Note that
\begin{align}
\min\big\{2\mu,2\mu+3\l\big\}\|\D(\ud)\|_{L^2}\leq 2\mu\|\D(\ud)\|_{L^2}+\l\|\div \ud\|^2_{L^2}.\label{noteud}
\end{align}

Substituting \eqref{M1}, \eqref{M2}, \eqref{M3}, \eqref{M4}--\eqref{M7} and \eqref{M8} into \eqref{M0}, using \eqref{energy}, \eqref{M9}, \eqref{noteud} and the Lemma \ref{korn}, and choosing $\delta$ suitably small, we can obtain \eqref{est2}.

The proof is completed.
\end{proof}

We now derive the estimate on $A_2(\sg(T))$ as follows:
\begin{lemma} \label{EST3}
Let $(\r,u)$ be a smooth solution to \eqref{eqb} on $\Omega\times(0,T]$ satisfying \eqref{condition}. Suppose that $\o\in W^{4,1}(0,\iy)$, and suppose that \eqref{datab1}--\eqref{datab3} hold. Then, there exist positive constants $K$ and $\e_1$ both depending only on $\mu$, $\l$, $a$, $\g$, $\w41$, $\br$ and $M$ such that if $m_0\leq \ep^3$ and if $\ep\leq \e_1$, then
\begin{align}
A_2(\sg(T))+\int_0^{\sg(T)}\int_{\Omega}\r|\ud|^2dxdt\leq 2K.\label{est3}
\end{align}
\end{lemma}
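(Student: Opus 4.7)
The plan is to apply the differential inequality \eqref{est1} with the constant weight $\eta\equiv 1$ on $[0,\sigma(T)]$, integrate in time, and close the resulting bound using Lemma \ref{korn}, the energy estimate from Lemma \ref{Energy}, the a priori assumption $A_2(\sigma(T))\le 3K$, and the auxiliary-field bounds \eqref{be0}--\eqref{be2x}. With $\eta\equiv 1$ the $|\eta'|$-contributions in \eqref{est1} vanish and integration from $0$ to $t\in[0,\sigma(T)]$, combined with Korn's inequality \eqref{korntype} (to replace $\|\D(u)\|_{L^2}+|\sqrt\alpha u|_{L^2(\pt\Omega)}$ by $\|\nabla u\|_{L^2}$), produces
\begin{align*}
\|\nabla u(t)\|^2_{L^2}+\int_0^t\|\sqrt{\rho}\dot u\|^2_{L^2}\,ds\le C\Phi_0+R_1(t)+R_2(t)+R_3(t),
\end{align*}
where $\Phi_0=\mu\|\D(u_0)\|^2_{L^2}+\tfrac{\lambda}{2}\|\mathrm{div}\,u_0\|^2_{L^2}+\mu|\sqrt{\alpha}u_0|^2_{L^2(\pt\Omega)}$, $R_1$ arises from the total-derivative term $\bigl(\int P(\rho)\,\mathrm{div}\,u\bigr)_t$, $R_2$ from $\bigl(\mu\int\triangle b\cdot u\bigr)_t$, and $R_3=C\int_0^t\bigl(\|\nabla u\|^2_{L^2}+\|\nabla u\|^3_{L^2}+\|\nabla u\|^6_{L^2}+\varepsilon^{1/3}\bigr)\,ds$.

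Next I would estimate each piece. The initial term $\Phi_0$ is bounded by $CM^2$ from \eqref{datab2} together with the trace theorem applied on $\Omega_5$. For $R_1$, I would apply Cauchy--Schwarz, Young's inequality, and $\rho\le 2\bar\rho$ to absorb $\tfrac14\|\nabla u\|^2_{L^2}$ into the left and control the remainder by $C\bar\rho^{2\gamma-1}\|P(\rho)\|_{L^1}\le C\varepsilon^{1/3}$ via Lemma \ref{Energy}. For $R_2$, the bound $\|\triangle b\|_{L^{6/5}}\le C\varepsilon^{1/6}|\omega|$ from \eqref{be2} plus Sobolev embedding yields a contribution of order $\varepsilon^{1/3}$. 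For $R_3$, under the a priori bound $A_2(\sigma(T))\le 3K$ I have $\|\nabla u\|^6_{L^2}\le 9K^2\|\nabla u\|^2_{L^2}$ and $\|\nabla u\|^3_{L^2}\le\sqrt{3K}\,\|\nabla u\|^2_{L^2}$, so Lemma \ref{Energy} (giving $\int_0^T\|\nabla u\|^2_{L^2}\,ds\le C\varepsilon^{1/3}$) yields $R_3(t)\le C(1+K^2)\varepsilon^{1/3}$.

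Taking the supremum in $t\in[0,\sigma(T)]$ and evaluating the time integral at $t=\sigma(T)$ leads to
\begin{align*}
A_2(\sigma(T))+\int_0^{\sigma(T)}\|\sqrt{\rho}\dot u\|^2_{L^2}\,ds\le C_1M^2+C_2(1+K^2)\varepsilon^{1/3}.
\end{align*}
A two-stage choice of constants then closes the argument: first fix $K=2C_1M^2$ so that the $M$-dependent piece is $\le K$; then choose $\varepsilon_1$ small enough (depending on $\mu,\lambda,a,\gamma,\bar\rho,\alpha,M$) so that $C_2(1+K^2)\varepsilon_1^{1/3}\le K$, yielding \eqref{est3}.

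The main obstacle is the cubic/sextic time integral in $R_3$: a priori it threatens a blow-up like $K^2$, and the whole scheme works only because the energy estimate of Lemma \ref{Energy} delivers the crucial $\varepsilon^{1/3}$ smallness of $\int_0^T\|\nabla u\|^2_{L^2}\,ds$ under $m_0\le\varepsilon^3$, allowing $K$ to be chosen from $M$ alone and then $\varepsilon$ from $K$. Note that the companion inequality \eqref{est2} of Lemma \ref{Lemma1} is not needed for this step; it will be invoked at the next stage to close the higher-order bound $A_1(T)$.
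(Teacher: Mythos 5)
Your proposal is correct and follows essentially the same route as the paper: integrate \eqref{est1} with $\eta\equiv1$ over $(0,t)$, control the left side via \eqref{fact} and Lemma \ref{korn}, bound the total-derivative terms and the cubic/sextic time integrals using the energy estimate \eqref{energy} together with the a priori bound $A_2(\sg(T))\le 3K$, and then close by first fixing $K$ from the $M$-dependent constants and then shrinking $\e_1$. The only differences are cosmetic bookkeeping of constants (the paper writes the remainder as $C_1\ep[A_2(\sg(T))]^2$ and sets $\e_1=(9C_1K)^{-1}$, which is equivalent to your two-stage choice).
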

\begin{proof}
For $t\in (0,\sg(T))$, integrating \eqref{est1} over $(0,t)$ and taking $\eta=1$, we obtain, after using \eqref{condition}, \eqref{be2}, \eqref{mass}, \eqref{energy} and \eqref{fact}, H\"{o}lder's inequality, Sobolev's inequality and Young's inequality, that
\begin{align}
&\|\nabla u\|^2_{L^2}+\int_0^{t}\int_{\Omega}\r|\ud|^2dxdt\notag\\
\leq& C\int_\Omega P(\r)\div udx\big|_0^{t}+C\int_{\Omega}\lap b\cdot udx\big|^t_0+C+C\int_0^{t}(\|\nabla u\|^3_{L^2}+\|\nabla u\|^6_{L^2})dt\notag\\
\leq& C(\br)(M+1)+\frac{1}{2}A_2(\sg(T))+C\ep[A_2(\sg(T))]^2,\label{A30}
\end{align}
which gives that
\begin{align}
A_2(\sg(T))+\int_0^{\sg(T)}\int_{\Omega}\r|\ud|^2dxdt\leq K+C_1\ep[A_2(\sg(T))]^2
\end{align}
for some positive constants $K$ and $C_1$ depending only on $\mu$, $\l$, $a$, $\g$, $\w41$, $\br$ and $M$. By choosing $\e_1\triangleq (9C_1K)^{-1}$, we get \eqref{est3}.

The proof is completed.
\end{proof}

Then, we derive the upper bounds for $A_1(T)$ as follows:
\begin{lemma}\label{EST4}
Let $(\r,u)$ be a smooth solution to \eqref{eqb} on $\Omega\times(0,T]$ satisfying \eqref{condition} for $K$ as in the Lemma \ref{EST3}. Suppose that $\o\in W^{4,1}(0,\iy)$, and suppose that \eqref{datab1}--\eqref{datab3} hold. Then, there exist a positive constant $\e_2$ depending only on $\mu$, $\l$, $a$, $\g$, $\w41$, $\br$ and $M$ such that if $m_0\leq \ep^3$ and if $\ep\leq \e_2$, then
\begin{align}
A_1(T)\leq \ep^{\frac{1}{2}}.\label{est4}
\end{align}
\end{lemma}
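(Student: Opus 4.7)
The plan is to integrate the identity \eqref{est1} with the weight $\eta=\sigma$ over $[0,t]$ for $t\in[0,T]$, and to handle the emergent control of $\int_0^t\sigma\|\sqrt{\rho}\dot u\|^2_{L^2}\,ds$ by separately integrating \eqref{est2} with the same weight. The bootstrap hypothesis $A_1(T)\leq 2\epsilon^{1/2}$ will render the cubic and sextic self-interactions tame; Lemma \ref{EST3} supplies $A_2(\sigma(T))\leq 2K$ on $[0,\sigma(T)]$; the energy estimate \eqref{energy} together with $m_0\leq\epsilon^3$ provides $\|P(\rho)\|_{L^p}$-smallness and $\int_0^T\|\nabla u\|^2_{L^2}\,ds\leq C\epsilon^{1/3}$; and the $L^1(0,\infty)$-integrability of $|\omega|+|\omega'|+|\omega''|$ from $\omega\in W^{4,1}$ permits a Gronwall closure uniform in $T$.

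Integrating \eqref{est1} with $\eta=\sigma$, the pointwise-in-time endpoint contributions $\sigma(t)\int P(\rho)\div u\,dx$ and $\mu\sigma(t)\int\Delta b\cdot u\,dx$ are dominated via Young's inequality by a small multiple of $\sigma\|\nabla u\|^2_{L^2}$ (absorbed into the LHS) plus $C\sigma\|P(\rho)\|^2_{L^2}+C\sigma\|\Delta b\|^2_{L^{6/5}}$; the former is controlled by $\|P(\rho)\|^2_{L^2}\leq C\bar\rho^{2\gamma-1}m_0\leq C\epsilon^3$ and the latter by $\|\Delta b\|_{L^{6/5}}\leq C\epsilon^{1/6}|\omega|$ from \eqref{be2}. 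For the nonlinear terms,
\[
\int_0^t\sigma\|\nabla u\|^3_{L^2}\,ds\leq\sqrt{A_1(T)}\int_0^t\|\nabla u\|^2_{L^2}\,ds\leq C\epsilon^{7/12},
\]
and $\int_0^t\sigma\|\nabla u\|^6_{L^2}\,ds\leq\bigl(\max\{A_1(T),A_2(\sigma(T))\}\bigr)^2\int_0^t\|\nabla u\|^2_{L^2}\,ds\leq CK^2\epsilon^{1/3}$, after splitting the second integral according to whether $s\leq1$ (use $A_2$) or $s>1$ (use $A_1$). The $|\sigma'|$-supported terms sit on $[0,1]$ and are controlled by $A_2(\sigma(T))\leq 2K$ and the energy estimate.

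To treat the resulting $\int_0^t\sigma\|\sqrt{\rho}\dot u\|^2$ on the LHS, I integrate \eqref{est2} with $\eta=\sigma$ and write $B(T)\triangleq\sup_{[0,T]}\sigma\|\sqrt{\rho}\dot u\|^2_{L^2}$. The most delicate contribution is $\sigma\|\nabla u\|_{L^2}\|\sqrt{\rho}\dot u\|^3_{L^2}$, which is estimated by
\[
\sigma\|\nabla u\|_{L^2}\|\sqrt{\rho}\dot u\|^3_{L^2}\leq\sqrt{2A_1(T)}\bigl(\sqrt{\sigma}\|\sqrt{\rho}\dot u\|_{L^2}\bigr)\|\sqrt{\rho}\dot u\|^2_{L^2}\leq C\epsilon^{1/4}\sqrt{B(T)}\,\|\sqrt{\rho}\dot u\|^2_{L^2},
\]
and then absorbed once $\int_0^T\|\sqrt{\rho}\dot u\|^2_{L^2}\,ds$ is bounded uniformly — a fact obtained by combining \eqref{est3} on $[0,\sigma(T)]$ with the just-integrated \eqref{est1} on $[1,T]$. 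Gronwall applied with the $L^1$-integrable coefficients $|\omega|+|\omega'|+|\omega''|$ produces $B(T)\leq C$ independent of $T$, and feeding this back into the integrated \eqref{est1} yields $A_1(T)\leq C\epsilon^c$ for some $c>\tfrac12$, so that $A_1(T)\leq\epsilon^{1/2}$ upon choosing $\epsilon\leq\epsilon_2$.

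The main technical obstacle is the exponent bookkeeping: every error must admit a bound of the form $C\epsilon^c$ with $c>\tfrac12$ (strictly), which requires fully exploiting $m_0\leq\epsilon^3$ in the pressure-related estimates, the scaling $\|\Delta b\|_{L^{6/5}}\leq C\epsilon^{1/6}|\omega|$ combined with the $L^1$-integrability of $|\omega|$, and the bootstrap $A_1(T)\leq 2\epsilon^{1/2}$ to lower the effective degree of the cubic-in-$\dot u$ nonlinearity in \eqref{est2}. A second source of care is ensuring that no constant grows with $T$, which is why $W^{4,1}$-control of $\omega$ (giving $W^{3,1}$-integrability of $\omega'',\omega'$) is needed rather than mere $L^\infty$-control.
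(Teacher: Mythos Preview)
The paper takes a rather different and simpler route: it does \emph{not} integrate \eqref{est1} over all of $[0,T]$, and it does not touch \eqref{est2} at all for this lemma. Instead it partitions time into overlapping windows $(i-1,i+1]$ and on each one integrates \eqref{est1} with the \emph{shifted} weight $\eta=\sigma_i(t):=\sigma(t+1-i)$. Since $\sigma_i(i-1)=0$ there is no lower endpoint contribution; since each window has length at most $2$, all time-integrals are automatically $T$-independent without any appeal to the $L^1$-integrability of $\omega$; and for $i\geq 2$ the window lies entirely in $\{\sigma=1\}$, so the bootstrap $\|\nabla u\|^2\leq A_1(T)\leq 2\epsilon^{1/2}$ is available pointwise. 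This gives $\sup_{[i,i+1]}\|\nabla u\|^2\leq C(\bar\rho,K)\epsilon$ with a constant independent of $i$, hence \eqref{est4}. Your detour through \eqref{est2} and the auxiliary quantity $B(T)=\sup\sigma\|\sqrt{\rho}\dot u\|^2$ is unnecessary here: the term $\int_0^t\sigma\|\sqrt{\rho}\dot u\|^2$ already sits on the \emph{left} of the integrated \eqref{est1}, and nothing on the right of \eqref{est1} calls for a pointwise-in-time bound on $\sqrt{\rho}\dot u$.

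More seriously, your own exponent bookkeeping does not close. You display
\[
\int_0^t\sigma\|\nabla u\|^6_{L^2}\,ds\leq\bigl(\max\{A_1,A_2\}\bigr)^2\int_0^t\|\nabla u\|^2_{L^2}\,ds\leq CK^2\epsilon^{1/3},
\]
but $1/3<1/2$, so this alone blocks $A_1\leq\epsilon^{1/2}$. The remedy is to keep one factor of $\sigma\|\nabla u\|^2\leq A_1$ on the short-time piece: on $[0,1]$ write $\sigma\|\nabla u\|^6\leq(\sigma\|\nabla u\|^2)\cdot\|\nabla u\|^2\cdot\|\nabla u\|^2\leq A_1A_2\|\nabla u\|^2$, which integrates to $CA_1A_2\,\epsilon^{1/3}\leq CK\epsilon^{5/6}$. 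A parallel problem afflicts your endpoint estimate $C\sigma\|\Delta b\|_{L^{6/5}}^2\leq C\epsilon^{1/3}|\omega(t)|^2$: this is a pointwise-in-$t$ quantity, so the $L^1$-integrability of $\omega$ you invoke cannot help, and the exponent remains $1/3$. Hence the concluding assertion that ``every error admits a bound $C\epsilon^c$ with $c>\tfrac12$'' is not justified by the estimates you actually wrote down.
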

\begin{proof}
For simplicity, we only prove the case with $T>2$. Otherwise, it can be done by choosing the step size suitably small.

For integer $i$ $(1\leq i\leq [T]-1)$ and $t\in(i-1,i+1]$, taking $\eta=\sg_i(t)\triangleq\sg(t+1-i)$ and integrating \eqref{est1} over $(i-1,t)$, we obtain, after using \eqref{condition}, \eqref{be2}, \eqref{mass}, \eqref{energy} and \eqref{fact}, H\"{o}lder's inequality, Sobolev's inequality and Young's inequality, that
\begin{align}
&\sg_i\|\nabla u\|^2_{L^2}+\int^t_{i-1}\sg_i\|\sqrt{\r}\ud\|^2_{L^2}ds\notag\\
\leq&C\sg_i\int_\Omega P(\r)\div udx+C\sg_i\int_{\Omega}\lap b\cdot udx+C\ep+C\int^t_{i-1}\sg_i(\|\nabla u\|^3_{L^2}+\|\nabla u\|^6_{L^2})ds\notag\\
\leq&C(\br)(m^{\frac{1}{2}}_0+\ep)+\frac{\sg_i}{2}\|\nabla u\|^2_{L^2}+C\big(A^2_1(T)+A^2_2(\sg(T))\big)\ep\notag\\
\leq&C(\br,K)\ep+\frac{\sg_i}{2}\|\nabla u\|^2_{L^2},
\end{align}
which leads to
\begin{align}
\sup_{0\leq t\leq 1}(\sg\|\nabla u\|^2_{L^2})\leq C(\br,K)\ep\leq \ep^\frac{1}{2}\label{A11}
\end{align}
and
\begin{align}
\sup_{i\leq t\leq i+1}(\|\nabla u\|^2_{L^2})+\int^{i+1}_{i-1}\sg_i\|\sqrt{\r}\ud\|^2_{L^2}ds\leq C(\br,K)\ep\leq \ep^\frac{1}{2},\label{A12}
\end{align}
provided that $m_0\leq \ep^3$ and $\ep\leq \e_2\triangleq \min\{\e_1,C(\br,K)^{-2}\}$. Noting that the constant $C(\br,K)$ is independent of $i$, we can obtain \eqref{est4} from \eqref{A11} and \eqref{A12}.

The proof is completed.
\end{proof}

Next, we will derive the time-independent upper bounds for density. We first establish the estimates on $\int^{t_2}_{t_1}\|\nabla\ud\|^2_{L^2} dt$ as follows:
\begin{lemma}\label{UD2}
Let $(\r,u)$ be a smooth solution to \eqref{eqb} on $\Omega\times(0,T]$ satisfying \eqref{condition} for $K$ as in the Lemma \ref{EST3}. Suppose that $\o\in W^{4,1}(0,\iy)$, and suppose that \eqref{datab1}--\eqref{datab3} hold. In addition, assume that $m_0\leq \ep^3$ and $\ep\leq \e_2$ as in the Lemma \ref{EST4}. Then, it holds that
\begin{align}
\sup_{t\in [0,T]}(\sg^2\|\sqrt{\r}\ud\|^2_{L^2})\leq C\ep^\frac{1}{2}\label{est5}
\end{align}
and
\begin{align}
\int^{t_2}_{t_1}\sg^2\|\nabla\ud\|^2_{L^2}dt\leq C\ep(t_2-t_1)+C\ep^{\frac{1}{2}}\label{est6}
\end{align}
for $0<t_1<t_2\leq T$,
\end{lemma}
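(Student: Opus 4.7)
The plan is to apply the differential inequality \eqref{est2} of Lemma \ref{Lemma1} with the weight $\eta(t)=\sg^2(t)$. Since $\sg(t)=\min\{1,t\}$ satisfies $\sg'=\mathbf{1}_{[0,1]}$, we have $\eta'=2\sg\mathbf{1}_{[0,1]}$, and integrating the resulting inequality over $(0,t)$ (with $\sg^2(0)=0$) reduces the proof of \eqref{est5} to estimating four source integrals on the right of \eqref{est2}: a Gronwall-type term with coefficient $C(|\o|+|\frac{d\o}{dt}|+|\frac{d^2\o}{dt^2}|)\in L^1(0,\iy)$; an initial-layer term $2C\int_0^{\min(t,1)}\sg\|\sqrt\r\ud\|^2_{L^2}\,ds$; a cubic nonlinear term $C\int_0^t\sg^2\|\nabla u\|_{L^2}\|\r\ud\|^3_{L^2}\,ds$; and the low-order source $C\int_0^t\sg^2(\|\nabla u\|^2_{L^2}+\|\nabla u\|^4_{L^2}+\ep^{1/3})\,ds$.

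The initial-layer term is handled using the unit-interval bound $\int_{i-1}^{i+1}\sg_i\|\sqrt\r\ud\|^2_{L^2}\,ds\leq C\ep^{1/2}$ established inside the proof of Lemma \ref{EST4}; for $i=1$ this specializes to $\int_0^1 s\|\sqrt\r\ud\|^2_{L^2}\,ds\leq C\ep^{1/2}$, which is precisely the initial-layer integrand. For the cubic term, $\sup\r\leq 2\br$ yields $\|\r\ud\|_{L^2}\leq\sqrt{2\br}\,\|\sqrt\r\ud\|_{L^2}$, so I regroup the integrand as $C(\br)(\|\nabla u\|_{L^2}\|\sqrt\r\ud\|_{L^2})\cdot(\sg^2\|\sqrt\r\ud\|^2_{L^2})$, which enters the Gronwall factor with an $L^2(0,T)$-integrable coefficient bounded via \eqref{energy} and the unit-interval smallness above. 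For the low-order source, \eqref{energy} gives $\int_0^T\|\nabla u\|^2_{L^2}\,dt\leq C\ep^{1/3}$, while the interpolation $\sg^2\|\nabla u\|^4_{L^2}\leq(\sg\|\nabla u\|^2_{L^2})\|\nabla u\|^2_{L^2}\leq\ep^{1/2}\|\nabla u\|^2_{L^2}$ (via \eqref{est4}) produces an even smaller contribution; the residual $\ep^{1/3}$, which from the derivation of \eqref{est2} inside Lemma \ref{Lemma1} is always paired with factors of $(|\o|+|\frac{d\o}{dt}|)$, integrates in time to at most $C\ep^{1/3}\|\o\|_{W^{4,1}}$. Gronwall's inequality applied to the resulting scalar inequality then yields \eqref{est5}.

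For \eqref{est6}, I integrate $\sg^2\|\nabla\ud\|^2_{L^2}$ over $(t_1,t_2)$ in the same differential inequality. The additive piece $C\ep^{1/2}$ absorbs the initial-layer and Gronwall contributions handled above, while the linear-in-time piece $C\ep(t_2-t_1)$ is produced by substituting \eqref{est5} back into the source terms and using $\sg\|\nabla u\|^2_{L^2}\leq\ep^{1/2}$, so that densities such as $\sg^2\|\nabla u\|^4_{L^2}\leq\ep\cdot\|\nabla u\|^2_{L^2}$ become $O(\ep)$ pointwise in time. The main technical obstacle is the careful bookkeeping of $\ep$-powers: combining the energy estimate $\int_0^T\|\nabla u\|^2_{L^2}\,dt\leq C\ep^{1/3}$, the pointwise bound $A_1(T)\leq\ep^{1/2}$, the unit-interval smallness of $\int\|\sqrt\r\ud\|^2_{L^2}\,dt$, and the $L^1(0,\iy)$-integrability of the rotational coefficients $(|\o|+|\frac{d\o}{dt}|+|\frac{d^2\o}{dt^2}|)$ in such a way that the cumulative right-hand side, after the Gronwall loop, ends at the sharp rate $\ep^{1/2}$ claimed in \eqref{est5} rather than the weaker rates that individual source terms suggest.
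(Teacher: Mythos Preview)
Your overall strategy—taking $\eta=\sg^2$ in \eqref{est2} and integrating—is the right starting point, but the treatment of the cubic term $C\eta\|\nabla u\|_{L^2}\|\r\ud\|^3_{L^2}$ has a genuine gap when you integrate directly over $(0,t)$ for arbitrarily large $t$. You absorb one factor of $\sg^2\|\sqrt\r\ud\|^2_{L^2}$ into the Gronwall unknown and leave $\|\nabla u\|_{L^2}\|\sqrt\r\ud\|_{L^2}$ as the coefficient, claiming it is integrable uniformly in $T$. But the only available controls are $\int_0^T\|\nabla u\|^2_{L^2}\,dt\le C\ep^{1/3}$ from \eqref{energy} and the \emph{per-unit-interval} bound $\int_i^{i+1}\|\sqrt\r\ud\|^2_{L^2}\,dt\le C\ep^{1/2}$ from \eqref{A12}; summing the latter over $i$ gives $\int_1^T\|\sqrt\r\ud\|^2_{L^2}\,dt\le C\ep^{1/2}(T-1)$, which grows linearly in $T$. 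Hence by Cauchy--Schwarz your Gronwall coefficient satisfies only $\int_0^T\|\nabla u\|_{L^2}\|\sqrt\r\ud\|_{L^2}\,dt\le C\ep^{1/6}(K+T\ep^{1/2})^{1/2}$, and the Gronwall exponential blows up as $T\to\infty$. The paper avoids this by \emph{not} integrating over $(0,t)$: it takes $\eta=\sg_i^2$ with the shifted weight $\sg_i(t)=\sg(t+1-i)$ and integrates \eqref{est2} only over the unit window $(i-1,t)$ for each integer $i$. On such a window $\int_{i-1}^t\sg_i\|\sqrt\r\ud\|^2_{L^2}\,ds\le C\ep^{1/2}$ is already small, so the cubic term is bounded by $C\ep^{1/2}\sup_{[i-1,t]}(\sg_i\|\sqrt\r\ud\|_{L^2})$ and absorbed via Young's inequality into the left side, with Gronwall needed only for the (genuinely $L^1(0,\infty)$) rotational coefficient $|\o|$. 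The resulting window bounds are uniform in $i$, yielding \eqref{est5}.

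For \eqref{est6} you also miss a necessary intermediate step. After \eqref{est5} is in hand, the cubic term over $(t_1,t_2)$ is controlled by $C\ep^{1/4}\int_{t_1}^{t_2}\sg\|\sqrt\r\ud\|^2_{L^2}\,dt$, and you need a bound of the form $\int_{t_1}^{t_2}\sg\|\sqrt\r\ud\|^2_{L^2}\,dt\le C\ep^{1/2}+C\ep(t_2-t_1)$. This does not fall out of \eqref{est2}; the paper obtains it by going back to \eqref{est1} with $\eta=\sg$ and integrating over $(t_1,t_2)$ (see \eqref{B4}), exploiting that \eqref{est1} has $\|\sqrt\r\ud\|^2_{L^2}$ on the \emph{left} side. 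Without this step the linear-in-time structure $C\ep(t_2-t_1)$ in \eqref{est6} cannot be recovered from your argument.
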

\begin{proof}
For integer $i$ $(1\leq i\leq [T]-1)$ and $t\in(i-1,i+1]$, taking $\eta=\sg^2_i$ and integrating \eqref{est2} over $(i-1,t)$, by using \eqref{energy}, \eqref{est3}, \eqref{est4} and Young's inequality, we have
\begin{align}
&\sg^2_i\|\sqrt{\r}\ud\|^2_{L^2}+\int^t_{i-1}\sg^2_i\|\nabla \ud\|^2_{L^2}ds\notag\\
\leq& C\int^t_{i-1}(|\o|\sg^2_i+\sg_i\sg_i^\prime)\|\sqrt{\r}\ud\|^2_{L^2}ds+C\ep
+C\int^t_{i-1}\sg^2_i\|\nabla u\|^4_{L^2}+\sg^2_i(\|\nabla u\|_{L^2}+\ep^{\frac{3}{2}})\|\r\ud\|^3_{L^2}ds\notag\\
\leq&C\int^t_{i-1}|\o|\sg^2_i\|\sqrt{\r}\ud\|^2_{L^2}ds+C(\br)\ep^\frac{1}{2}+CA_1(T)\ep
+C(\br)\sup_{i-1\leq s\leq t}(\|\nabla u\|_{L^2}+\ep^\frac{3}{2})\int^t_{i-1}\sg^2_i\|\r\ud\|^3_{L^2}ds\notag\\
\leq&C\int^t_{i-1}|\o|\sg^2_i\|\sqrt{\r}\ud\|^2_{L^2}ds+C(\br)\ep^\frac{1}{2}
+C\sup_{i-1\leq s\leq t}(\sg_i\|\sqrt{\r}\ud\|_{L^2})\int^t_{i-1}\sg_i\|\sqrt{\r}\ud\|^2_{L^2}ds\notag\\
\leq&C\int^t_{i-1}|\o|\sg^2_i\|\sqrt{\r}\ud\|^2_{L^2}ds+C(\br)\ep^\frac{1}{2}+C\ep^{\frac{1}{2}}\sup_{i-1\leq s\leq t}(\sg_i\|\sqrt{\r}\ud\|_{L^2}),\notag\\
\leq&C\int^t_{i-1}|\o|\sg^2_i\|\sqrt{\r}\ud\|^2_{L^2}ds+C(\br)\ep^\frac{1}{2}+\frac{1}{2}\sup_{i-1\leq s\leq t}(\sg^2_i\|\sqrt{\r}\ud\|^2_{L^2}),\label{B1}
\end{align}
where we have used the fact that $\int^t_{i-1}\sg_i\|\sqrt{\r}\ud\|^2_{L^2}ds\leq \ep^{\frac{1}{2}}$ which had been shown in \eqref{A12}.

Therefore, \eqref{B1}, together with Gronwall's inequality, gives that
\begin{align}
\sup_{0\leq t\leq 1}(\sg^2\|\sqrt{\r}\ud\|^2_{L^2})\leq C\ep^{\frac{1}{2}}\label{B2}
\end{align}
and
\begin{align}
\sup_{i\leq t\leq i+1}(\|\sqrt{\r}\ud\|^2_{L^2})\leq C\ep^{\frac{1}{2}}.\label{B3}
\end{align}
Hence, \eqref{B2} and \eqref{B3} lead to \eqref{est5}, since the constant $C$ is independent of $i$.

To estimate $\int^{t_2}_{t_1}\sg^2\|\nabla\ud\|^2_{L^2}dt$, we first integrate \eqref{est1} over $(t_1,t_2)$ for all $0<t_1<t_2\leq T$ and take $\eta=\sg$ to get
\begin{align}
&\int^{t_2}_{t_1}\sg\|\sqrt{\r}\ud\|^2_{L^2}dt\notag\\
\leq&C(\ep+A_1(T))+C\ep(t_2-t_1)+C\int^{t_2}_{t_1}\sg(\|\nabla u\|^3_{L^2}+\|\nabla u\|^6_{L^2})dt\notag\\
\leq&C(K)\ep^{\frac{1}{2}}+C\ep(t_2-t_1)+C(A^2_1(T)+A^2_2(\sg(T)))\ep\notag\\
\leq&C(K)\ep^{\frac{1}{2}}+C\ep(t_2-t_1),\label{B4}
\end{align}
where we have used \eqref{be2}, \eqref{mass}, \eqref{est3} and \eqref{est4}.

Next, integrating \eqref{est2} over $(t_1,t_2)$ for all $0<t_1<t_2\leq T$ and taking $\eta=\sg^2$,  we obtain, after using \eqref{est5} and \eqref{B4}, that
\begin{align}
&\int^{t_2}_{t_1}\sg^2\|\nabla\ud\|^2_{L^2}dt\notag\\
\leq& C\ep^{\frac{1}{2}}+C\ep(t_2-t_1)+C(\br)\sup_{t_1\leq t\leq t_2}(\sg\|\sqrt{\r}\ud\|_{L^2})\int^{t_2}_{t_1}\sg\|\sqrt{\r}\ud\|^2_{L^2}dt\notag\\
\leq& C\ep^{\frac{1}{2}}+C\ep(t_2-t_1)+C\ep^{\frac{1}{4}}\int^{t_2}_{t_1}\sg\|\sqrt{\r}\ud\|^2_{L^2}dt\notag\\
\leq& C\ep^{\frac{1}{2}}+C\ep(t_2-t_1).\label{B5}
\end{align}
This finishes the proof.
\end{proof}

Before showing the upper bounds for density, we still need the following estimation.
\begin{lemma}\label{UD1}
Let $(\r,u)$ be a smooth solution to \eqref{eqb} on $\Omega\times(0,T]$ satisfying \eqref{condition} for $K$ as in the Lemma \ref{EST3}. Suppose that $\o\in W^{4,1}(0,\iy)$, and suppose that \eqref{datab1}--\eqref{datab3} hold. In addition, assume that $m_0\leq \ep^3$ and $\ep\leq \e_2$ as in the Lemma \ref{EST4}. Then, it holds that
\begin{align}
\sup_{t\in(0,\sg(T)]}(\sg\|\sqrt{\r}\ud\|^2_{L^2})+\int_0^{\sg(T)}\sg\|\nabla\ud\|^2_{L^2}dt\leq C.\label{est7}
\end{align}
\end{lemma}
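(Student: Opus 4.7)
The plan is to apply estimate \eqref{est2} of Lemma \ref{Lemma1} with the multiplier $\eta=\sg$, integrate over $(0,t)$ for any $t\in(0,\sg(T)]$, and close a Gronwall-type inequality for the quantity $\psi(t):=\sg(t)\|\sqrt{\r}\ud\|_{L^2}^2(t)$. On the range $t\in(0,\sg(T)]\subset(0,1]$ we have $\sg(t)=t$, so $\sg(0)=0$ (no boundary contribution from the initial time) while $|\sg'|\leq 1$. Integrating \eqref{est2} then yields
\begin{align*}
\psi(t)+\int_0^t\sg\|\nabla\ud\|_{L^2}^2\,ds
\leq&\;C\int_0^t\bigl(|\o|+|\tfrac{d\o}{dt}|+|\tfrac{d^2\o}{dt^2}|\bigr)\psi(s)\,ds
+C\int_0^t\|\sqrt{\r}\ud\|_{L^2}^2\,ds\\
&+C\int_0^t\sg\|\nabla u\|_{L^2}\|\r\ud\|_{L^2}^3\,ds
+C\int_0^t\sg\bigl(\|\nabla u\|_{L^2}^2+\|\nabla u\|_{L^2}^4+\ep^{\frac{1}{3}}\bigr)ds.
\end{align*}

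The next step is to show that all terms on the right except the last Gronwall term and a small multiple of $\sup\psi$ are bounded by an absolute constant. The second term is controlled by Lemma \ref{EST3}: $\int_0^t\|\sqrt{\r}\ud\|_{L^2}^2\,ds\leq 2K$. The fourth integral is estimated using the energy bound \eqref{energy} together with the hypothesis $A_1(T)\leq 2\e^{\frac{1}{2}}$ in \eqref{condition}: $\int_0^t\sg\|\nabla u\|_{L^2}^2\,ds\leq\int_0^T\|\nabla u\|_{L^2}^2\,ds\leq C\ep^{\frac{1}{3}}$, $\int_0^t\sg\|\nabla u\|_{L^2}^4\,ds\leq A_1(T)\int_0^T\|\nabla u\|_{L^2}^2\,ds\leq C\ep^{\frac{5}{6}}$, and the $\ep^{\frac{1}{3}}$ piece is trivially bounded.

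The delicate term is the cubic one. Using $\|\r\ud\|_{L^2}\leq\|\sqrt{\r}\|_{L^\iy}\|\sqrt{\r}\ud\|_{L^2}\leq C(\br)\|\sqrt{\r}\ud\|_{L^2}$, I factor out $\sg\|\sqrt{\r}\ud\|_{L^2}^2$ and apply Cauchy--Schwarz together with \eqref{energy} and \eqref{est3}:
\begin{align*}
\int_0^t\sg\|\nabla u\|_{L^2}\|\r\ud\|_{L^2}^3\,ds
&\leq C\sup_{0<s\leq t}\psi(s)\int_0^t\|\nabla u\|_{L^2}\|\sqrt{\r}\ud\|_{L^2}\,ds\\
&\leq C\sup_{0<s\leq t}\psi(s)\Bigl(\int_0^t\|\nabla u\|_{L^2}^2\,ds\Bigr)^{\frac{1}{2}}\Bigl(\int_0^t\|\sqrt{\r}\ud\|_{L^2}^2\,ds\Bigr)^{\frac{1}{2}}
\leq C\ep^{\frac{1}{6}}\sup_{0<s\leq t}\psi(s).
\end{align*}

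Taking the supremum of the left-hand side over the interval $(0,t_0]$ and choosing $\ep$ small enough that $C\ep^{\frac{1}{6}}\leq\tfrac{1}{2}$ (consistent with $\ep\leq\e_2$) lets me absorb this contribution and obtain
\begin{align*}
\sup_{0<s\leq t_0}\psi(s)\leq C+C\int_0^{t_0}\bigl(|\o|+|\tfrac{d\o}{dt}|+|\tfrac{d^2\o}{dt^2}|\bigr)\sup_{0<\tau\leq s}\psi(\tau)\,ds.
\end{align*}
Since $\o\in W^{4,1}(0,\iy)$ gives $|\o|+|\o'|+|\o''|\in L^1(0,\iy)$ with norm controlled by $\w41$, Gronwall's inequality produces $\sup_{0<s\leq\sg(T)}\psi(s)\leq C$. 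Feeding this back into the integrated inequality bounds $\int_0^{\sg(T)}\sg\|\nabla\ud\|_{L^2}^2\,ds$ by the same constant, which is \eqref{est7}. The main technical obstacle is precisely the absorption of the cubic term: it requires the smallness $\ep^{\frac{1}{6}}$ obtained by combining the energy estimate \eqref{energy} with the weighted $L^2$-in-time bound \eqref{est3}; once that is in hand, the rest is routine Gronwall.
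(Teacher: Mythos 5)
Your proposal follows the paper's proof almost exactly: take $\eta=\sg$ in \eqref{est2}, integrate over $(0,t)$, control the $|\eta'|$-term by Lemma \ref{EST3} and the quartic/energy terms by \eqref{energy} and \eqref{condition}, and close with Gronwall in $\o$. The one place you diverge is the cubic term, and your version has a small logical wrinkle: you factor $\sg\|\nabla u\|_{L^2}\|\r\ud\|_{L^2}^3\le C\psi(s)\,\|\nabla u\|_{L^2}\|\sqrt{\r}\ud\|_{L^2}$ and absorb $C\ep^{1/6}\sup\psi$ into the left side, which requires $C\ep^{1/6}\le\tfrac12$ — an additional smallness condition on $\ep$ that is \emph{not} guaranteed by $\ep\le\e_2$ as fixed in Lemma \ref{EST4} (your parenthetical ``consistent with $\ep\le\e_2$'' is unjustified as stated, though harmless in the paper's overall scheme where $\e$ is shrunk further later). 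The paper avoids this entirely by factoring the cubic term as $\big(\sg^{1/2}\|\sqrt{\r}\ud\|_{L^2}\big)\big(\sg^{1/2}\|\nabla u\|_{L^2}\big)\|\sqrt{\r}\ud\|_{L^2}^2\le C(\br,K)\sup\big(\sg^{1/2}\|\sqrt{\r}\ud\|_{L^2}\big)$, which is \emph{linear} in the quantity being estimated, so Young's inequality absorbs it with $\tfrac12\sup\big(\sg\|\sqrt{\r}\ud\|_{L^2}^2\big)$ regardless of the size of the constant and with no extra smallness of $\ep$. Either fix (adopting the paper's factorization, or explicitly shrinking the admissible $\ep$) makes your argument complete.
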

\begin{proof}
For $t\in (0,\sg(T)]$, taking $\eta=\sg$ in \eqref{est2} and integrating the resulting equation over $(0,t)$, we obtain from \eqref{est3} and Young's inequality that
\begin{align}
&(\sg\|\sqrt{\r}\ud\|^2_{L^2})+\int_0^{t}\sg\|\nabla\ud\|^2_{L^2}ds\notag\\
\leq&C\int_0^{t}(\sg|\o|+\sg^{\prime})\|\sqrt{\r}\ud\|^2_{L^2}ds+C\ep+C\int_0^{t}\sg\|\nabla u\|^4_{L^2}+\sg(\|\nabla u\|_{L^2}+\ep^{\frac{3}{2}})\|\r\ud\|^3_{L^2}ds\notag\\
\leq&C\int_0^{t}\sg|\o|\|\sqrt{\r}\ud\|^2_{L^2}ds+C(K)+C(\br)\sup_{t\in(0,\sg(T)]}(\sg^\frac{1}{2}\|\sqrt{\r}\ud\|_{L^2})\int_0^{t}\|\sqrt{\r}\ud\|^2_{L^2}ds\notag\\
\leq&C\int_0^{t}\sg|\o|\|\sqrt{\r}\ud\|^2_{L^2}ds+C(\br,K)+\frac{1}{2}\sup_{t\in(0,\sg(T)]}(\sg\|\sqrt{\r}\ud\|^2_{L^2}),
\end{align}
which, together with Gronwall's inequality, implies \eqref{est7}.

The proof of is completed.
\end{proof}

We are now in a position to use the Lemma \ref{Z2000} to derive a uniform (in time) upper bound for the density.
\begin{lemma}\label{density}
Let $(\r,u)$ be a smooth solution to \eqref{eqb} on $\Omega\times(0,T]$ satisfying \eqref{condition} for $K$ as in the Lemma \ref{EST3}. Suppose that $\o\in W^{4,1}(0,\iy)$, and suppose that \eqref{datab1}--\eqref{datab3} hold. Then, there exist a positive constant $\e$ depending only on $\mu$, $\l$, $a$, $\g$, $\w41$, $\br$ and $M$ such that if $m_0\leq \ep^7$ and if $\ep\leq \e$, then
\begin{align}
\sup_{0\leq t\leq T}\|\r\|_{L^\iy}\leq \frac{7\br}{4}.\label{rhoiy}
\end{align}
\end{lemma}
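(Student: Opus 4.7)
The plan is to apply Zlotnik's lemma (Lemma \ref{Z2000}) along the characteristics of the transport vector field $f=u+b-\vec\omega\times x$ defined in \eqref{fdef}. For each $x_0\in\overline\Omega$ let $X(t;x_0)$ solve $X'(t)=f(X(t),t)$ with $X(0)=x_0$. Since $f\cdot\n=0$ on $\partial\Omega$ (by the boundary condition $\eqref{eqb}_3$) and $\div f=\div u$ (because $b$ is a curl and $\vec\omega\times x$ is divergence-free), the trajectory stays in $\overline\Omega$ and the continuity equation $\eqref{eqb}_1$ gives, along $X(t)$,
\begin{align*}
\tfrac{d}{dt}\rho(X(t),t) \;=\; -\rho\,\div u \;=\; -\tfrac{1}{2\mu+\lambda}\big[\rho F + a\rho^{\gamma+1}\big],
\end{align*}
where $F$ is the effective viscous flux \eqref{Fw}. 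Setting $y(t)=\rho(X(t),t)$, one obtains the ODE $y'(t)=g(y)+\bar b'(t)$ with
\begin{align*}
g(y)=-\tfrac{a}{2\mu+\lambda}\,y^{\gamma+1},\qquad \bar b'(t)=-\tfrac{y(t)\,F(X(t),t)}{2\mu+\lambda},
\end{align*}
so that $g(\infty)=-\infty$ as required.

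The crux is to verify the condition \eqref{N} with $N_1$ small enough that $\bar\xi\le\bar\rho$ in \eqref{barxi}, and $N_0$ small enough that $\bar\rho+N_0\le \tfrac{7\bar\rho}{4}$. Using the a priori bound $y\le 2\bar\rho$ from \eqref{condition}, it suffices to prove
\begin{align*}
\int_{t_1}^{t_2}\|F\|_{L^\infty}\,dt\;\le\;\tilde N_0+\tilde N_1(t_2-t_1),\qquad 0\le t_1<t_2\le T,
\end{align*}
with $\tilde N_0,\tilde N_1$ of order $\epsilon^{1/16}$. To this end, I would use the Gagliardo--Nirenberg inequality $\|F\|_{L^\infty}\le C\|F\|_{L^2}^{1/4}\|\nabla F\|_{L^6}^{3/4}$ on $\R^3$, then estimate each factor via Lemma \ref{FW}. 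The $L^2$ bound $\|F\|_{L^2}\le C(\|\nabla u\|_{L^2}+\|P(\rho)\|_{L^2})$, combined with Lemma \ref{EST4} ($\|\nabla u\|_{L^2}\le\epsilon^{1/4}\sigma^{-1/2}$) and $\|P(\rho)\|_{L^2}\le C m_0^{1/2}\le C\epsilon^{7/2}$, yields $\|F\|_{L^2}\le C\epsilon^{1/4}\sigma^{-1/2}$. For $\|\nabla F\|_{L^6}\le C(\|G\|_{L^6}+\|\nabla u\|_{L^6}+\|\nabla u\|_{L^2})$, bounding $\|\rho\dot u\|_{L^6}$ by $C\|\nabla\dot u\|_{L^2}$ via the Sobolev inequality (Lemma \ref{sob}, using $\dot u\cdot\n=0$ from \eqref{udot}) and handling the $b$-contributions through \eqref{be0}--\eqref{be2x}, gives $\|\nabla F\|_{L^6}\le C\bigl(1+\|\nabla\dot u\|_{L^2}+\|\sqrt\rho\dot u\|_{L^2}\bigr)$. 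Putting these together,
\begin{align*}
\|F\|_{L^\infty}\;\le\;C\epsilon^{1/16}\sigma^{-1/8}\bigl(1+\|\nabla\dot u\|_{L^2}+\|\sqrt\rho\dot u\|_{L^2}\bigr)^{3/4}.
\end{align*}

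For the short-time regime $[0,\sigma(T)]$, I would apply Hölder with the exponents $(8/3,8/5)$: the weight $\sigma^{-\alpha}$ is integrable for $\alpha<1$, and using Lemma \ref{UD1} (both $\sup_t\sigma\|\sqrt\rho\dot u\|_{L^2}^2\le C$ and $\int_0^{\sigma(T)}\sigma\|\nabla\dot u\|_{L^2}^2\,dt\le C$) one obtains
$\int_0^{\sigma(T)}\|F\|_{L^\infty}\,dt\le C\epsilon^{1/16}$. For the long-time regime $[\sigma(T),T]$, where $\sigma\equiv 1$, the improved bounds $\|\nabla u\|_{L^2}\le\epsilon^{1/4}$ (Lemma \ref{EST4}), $\|\sqrt\rho\dot u\|_{L^2}\le C\epsilon^{1/4}$ (Lemma \ref{UD2}), together with the dissipation bound \eqref{est6}, yield after one more Hölder in $t$ (exponents $(8/3,8/5)$ applied to $(t_2-t_1)^{5/8}(\int\|\nabla\dot u\|^2)^{3/8}\le C\epsilon^{3/8}(t_2-t_1)+C\epsilon^{3/16}$) the estimate $\int_{t_1}^{t_2}\|F\|_{L^\infty}\,dt\le C\epsilon^{1/16}(t_2-t_1)+C\epsilon^{1/4}$. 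Combining short and long regimes gives \eqref{N} with $N_0=C\bar\rho\epsilon^{1/16}/(2\mu+\lambda)$ and $N_1=C\bar\rho\epsilon^{1/16}/(2\mu+\lambda)$.

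Finally, Zlotnik's lemma gives $y(t)\le\max\{y(0),\bar\xi\}+N_0$ where $\bar\xi$ is any value with $g(\xi)\le-N_1$ for $\xi\ge\bar\xi$, so $\bar\xi=O(\epsilon^{1/(16(\gamma+1))})$; for $\epsilon\le\e$ small we have $\bar\xi\le\bar\rho$ and $N_0\le\tfrac{3\bar\rho}{4}$, whence $\rho(X(t),t)\le\bar\rho+N_0\le\tfrac{7\bar\rho}{4}$, which yields \eqref{rhoiy} since $x_0$ was arbitrary. The main obstacle is the Gagliardo--Nirenberg bookkeeping: one must trade the time-singular weight $\sigma^{-1/2}$ from the $L^2$ bound on $\nabla u$ (available only in the weighted form $A_1(T)\le\epsilon^{1/2}$) against the integrable singularities $\int_0^1\sigma^{-\beta}\,dt$ with $\beta<1$, and simultaneously absorb $\|\nabla\dot u\|_{L^2}$ into the $L^2$-in-time dissipation bound \eqref{est6}, so that both $N_0$ and $N_1$ become a small power of $\epsilon$.
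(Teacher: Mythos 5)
Your proposal is correct and follows essentially the same route as the paper: Zlotnik's lemma along the characteristics of $f=u+b-\vec\omega\times x$, with $\bar b'=-\rho F/(2\mu+\lambda)$ controlled through the Gagliardo--Nirenberg bound $\|F\|_{L^\infty}\le C\|F\|_{L^2}^{1/4}(\|F\|_{L^6}+\|\nabla F\|_{L^6})^{3/4}$, Lemma \ref{FW}, and the weighted bounds of Lemmas \ref{EST4}, \ref{UD2} and \ref{UD1}. The only (harmless) deviation is in the long-time regime, where you keep $N_1=O(\epsilon^{1/16})$ so that $\bar\xi$ is small, whereas the paper splits off an order-one $N_1=\tfrac{3a}{2(2\mu+\lambda)}$ via Young's inequality and takes $\bar\xi=1$; both yield \eqref{rhoiy}.
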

\begin{proof}
In the proof we always assume that $m_0\leq \ep^7$.

We first note that it follows from \eqref{G}, \eqref{condition}, \eqref{be0}, \eqref{be1}, \eqref{be2}, \eqref{mass}, \eqref{wiy}, \eqref{estG} and Sobolev's inequality that
\begin{align}
\|G\|_{L^6}\leq&C\big(\|\r\|_{L^\iy}\|\ud\|_{L^6}+\|\r\|_{L^\iy}\|\nabla b\|_{L^\iy}\|u\|_{L^6}+\|\r\|_{L^\iy}\|\vo\times u\|_{L^6}\notag\\
&+\|\r\|_{L^6}\|F(b)\|_{L^\iy}+\|\lap b\|_{L^6}+\|u\|_{L^6}\|u\|_{L^\iy}\big)\notag\\
\leq& C(\br)\big(\|\nabla \ud\|_{L^2}+\|\nabla u\|_{L^2}+m_0^{\frac{1}{6}}\ep^{-1}+\ep^\frac{5}{6}+\|\nabla u\|_{L^2}\|u\|_{L^\iy}\big)\notag\\
\leq& C(\br)\big(\|\nabla \ud\|_{L^2}+\|\nabla u\|_{L^2}+\ep^{\frac{1}{6}}+\|u\|_{L^\iy}\big)\notag\\
\leq& C(\br,K)\big(\|\nabla \ud\|_{L^2}+\|\nabla u\|_{L^2}+\|\sqrt{\r}\ud\|_{L^2}+\|\sqrt{\r}u\|_{L^2}+\ep^{\frac{1}{6}}\big),\label{G6}
\end{align}
where we have used the estimation from \eqref{FW4}, \eqref{estG} and Sobolev's inequality that
\begin{align}
\|u\|_{L^\iy}\leq C\|u\|_{W^{1,6}}\leq C(\|\nabla u\|_{L^2}+\|\sqrt{\r}\ud\|_{L^2}+\|\sqrt{\r}u\|_{L^2}+\ep^{\frac{1}{6}}).\label{uiy}
\end{align}

Next, rewrite the continuity equation $\eqref{eq}_1$ as
\begin{align}
D_t\r=g(\r)+\bar{b}^\prime(t),
\end{align}
where
\begin{align}
D_t\r\triangleq \r_t+(u+b-\o\times x)\cdot\nabla\r,\,\,\,\,\,\,g(\r)\triangleq-\frac{a\r}{2\mu+\l}\r^\g,\,\,\,\,\,\,\bar{b}(t)\triangleq-\frac{1}{2\mu+\l}\int_0^t\r Fdt.
\end{align}

For $t\in[0,\sg(T)]$, we deduce from the Lemma \ref{gn} and the Lemma \ref{FW}, \eqref{mass}, \eqref{energy}, \eqref{est4}, \eqref{est5}, \eqref{est7}, \eqref{G6} and \eqref{estG}, H\"{o}lder's inequality and Young's inequality that for all $0\leq t_1\leq t_2\leq \sg(T)$,
\begin{align}
|\bar{b}(t_2)-\bar{b}(t_1)|\leq&C\int_0^{\sg(T)}\|\r F\|_{L^\iy}dt\leq C(\br)\int_0^{\sg(T)}\|F\|^{\frac{1}{4}}_{L^2}(\|F\|_{L^6}+\|\nabla F\|_{L^6})^{\frac{3}{4}}dt\notag\\
\leq& C(\br)\int_0^{\sg(T)}\big(\|\nabla u\|_{L^2}+\|P(\r)\|_{L^2}\big)^{\frac{1}{4}}\notag\\
&\times\big(\|G\|_{L^6}+\|G\|_{L^2}+\|\nabla u\|_{L^2}+\|P(\r)\|_{L^2}+\|P(\r)\|_{L^6}\big)^{\frac{3}{4}}dt\notag\\
\leq&C(\br)\int_0^{\sg(T)}(\sg^{-\frac{1}{2}}(\sg^{\frac{1}{2}}\|\nabla u\|_{L^2})^{\frac{1}{4}}+\sg^{-\frac{3}{8}}m^{\frac{1}{8}}_0)
\big((\sg\|\nabla \ud\|^2_{L^2})^{\frac{3}{8}}+(\sg\|\sqrt{\r}\ud\|^2_{L^2})^{\frac{3}{8}}\big)dt\notag\\
&+C(\br)\int_0^{\sg(T)}\big(\|\nabla u\|_{L^2}+\ep^{\frac{7}{2}}\big)^{\frac{1}{4}}
(\|\nabla u\|_{L^2}+\ep^{\frac{1}{6}})^{\frac{3}{4}}dt\notag\\
\leq&C(\br,M)\ep^{\frac{1}{8}}+C(\br)\ep^{\frac{1}{16}}\int_0^{\sg(T)}(\sg^{-\frac{1}{2}}+1)
\big((\sg\|\nabla \ud\|^2_{L^2})^{\frac{3}{8}}+(\sg\|\sqrt{\r}\ud\|^2_{L^2})^{\frac{3}{8}}\big)dt\notag\\
\leq&C(\br,M)\ep^{\frac{1}{8}}+C(\br)\ep^{\frac{1}{16}}\big(1+\int^1_0\sg^{-\frac{3}{4}}dt\big)^{\frac{2}{3}}\big(\int_0^{\sg(T)}\sg\|\nabla \ud\|^2_{L^2}dt\big)^{\frac{1}{3}}\notag\\
\leq&C(\br,M)\ep^{\frac{1}{16}},
\end{align}
provided that $\ep\leq \e_2$ as in the Lemma \ref{EST4}.

Therefore, for $t\in[0,\sg(T)]$, we choose $N_0$ and $N_1$ in \eqref{N} as follows:
\begin{align}
N_0=C(\br,M),\,\,\,\,\,\,\,\,\,\,\,N_1=0.
\end{align}
Then,
\begin{align}
g(\xi)=-\frac{a\xi}{2\mu+\l}\xi^\g\leq -N_1=0,\,\,\,\,\,\mathrm{for}\,\, \mathrm{all}\,\,\xi\geq 0.
\end{align}
Thus, the Lemma \ref{Z2000} yields that
\begin{align}
\sup_{t\in[0,\sg(T)]}\|\r\|_{L^\iy}\leq \br+N_0\leq \br+C(\br,M)\ep^{\frac{1}{16}}\leq \frac{3}{2}\br, \label{den1}
\end{align}
provided that
\begin{align}
\ep\leq \e_3\triangleq\min\{\e_2,(\frac{\br}{2C(\br,M)})^{16}\}.
\end{align}

On the other hand, for $t\in[\sg(T),T]$, similarly, we can deduce from the Lemma \ref{gn}, \eqref{est4}, \eqref{est5} and \eqref{est6} that for all $\sg(T)\leq t_1\leq t_2\leq T$,
\begin{align}
|\bar{b}(t_2)-\bar{b}(t_1)|\leq&C(\br)\int_{t_1}^{t_2}\| F\|_{L^\iy}dt\leq\frac{a}{2\mu+\l}(t_2-t_1)+C(\br)\int_{t_1}^{t_2}\| F\|^{\frac{8}{3}}_{L^\iy}dt\notag\\
\leq&\frac{a}{2\mu+\l}(t_2-t_1)+C(\br)\int_{t_1}^{t_2}\|F\|^{\frac{2}{3}}_{L^2}(\|F\|_{L^6}+\|\nabla F\|_{L^6})^{2}dt\notag\\
\leq&\frac{a}{2\mu+\l}(t_2-t_1)+C(\br)\int_{t_1}^{t_2}\big(\|\nabla u\|_{L^2}+\|P(\r)\|_{L^2}\big)^{\frac{2}{3}}\notag\\
&\times\big(\|G\|_{L^6}+\|G\|_{L^2}+\|\nabla u\|_{L^2}+\|P(\r)\|_{L^2}+\|P(\r)\|_{L^6}\big)^2dt\notag\\
\leq&\frac{a}{2\mu+\l}(t_2-t_1)+C(\br)\ep^{\frac{1}{2}}(t_2-t_1)+C(\br)\ep^{\frac{1}{6}}+C(\br)\ep^{\frac{1}{6}}\int_{t_1}^{t_2}\|\nabla \ud\|^{2}_{L^2}dt\notag\\
\leq&\big(\frac{a}{2\mu+\l}+C(\br)\ep^{\frac{1}{2}}\big)(t_2-t_1)+C(\br)\ep^{\frac{1}{6}}\notag\\
\leq&\frac{3}{2}\frac{a}{2\mu+\l}(t_2-t_1)+C(\br)\ep^{\frac{1}{6}},
\end{align}
provided that
\begin{align}
\ep\leq \e_4\triangleq \min\{\e_3,\,\big(\frac{a}{2(2\mu+\l)C(\br)}\big)^2\}.
\end{align}
Choosing
\begin{align}
N_0=C(\br)\ep^{\frac{1}{6}},\,\,\,\,\,\,\,\,\,\,\,N_1=\frac{3}{2}\frac{a}{2\mu+\l}
\end{align}
in \eqref{N} and setting $\bar{\xi}=1$ in \eqref{barxi}, we note that
\begin{align}
g(\xi)=-\frac{a\xi}{2\mu+\l}\xi^\g\leq -N_1=-\frac{a}{2\mu+\l},\,\,\,\,\,\mathrm{for}\,\, \mathrm{all}\,\,\xi\geq\bar{\xi}.
\end{align}
Again, it follows from the Lemma \ref{Z2000} and \eqref{den1} that
\begin{align}
\sup_{t\in[\sg(T),T]}\|\r\|_{L^\iy}\leq \max\{\frac{3}{2}\br,1\}+N_0\leq \frac{3}{2}\br+C(\br)\ep^{\frac{1}{6}}\leq \frac{7}{4}\br,
\end{align}
provided that
\begin{align}
\ep\leq \e\triangleq\min\{\e_4,\,\frac{\br}{4C(\br)}\}.\label{den2}
\end{align}
The combination of \eqref{den1} and \eqref{den2} completes the proof.
\end{proof}

\subsection{Estimates on the spatial gradient of the smooth solution}
From now on, assume that
\begin{align}
m_0\leq \ep^7\,\,\,\,\,\,\,\mathrm{and}\,\,\,\,\,\,\,\ep\leq \e
\end{align}
as in the Proposition \ref{priori3} and the constant $C$ may depend on
\begin{align}
T,\,\,\|g\|_{L^2},\,\,\|\r_0\|_{H^2\cap W^{2,6}},\,\,\|P(\r_0)\|_{H^2\cap W^{2,6}},\,\,\||\cdot|\r_0^{\frac{1}{2}}\|_{L^\iy},
\end{align}
in addition to $\mu$, $\l$, $a$, $\g$, $\w41$, $\br$, $\a$ and $M$.

\begin{lemma} \label{UD4}
Let $(\r,u)$ be a smooth solution to \eqref{eqb} on $\Omega\times(0,T]$ satisfying \eqref{condition}. Suppose that $\o\in W^{4,1}(0,\iy)$, and suppose that \eqref{datab1}--\eqref{datab3} hold. In addition, assume that $m_0\leq \ep^7$ and $\ep\leq \e$ as in the Proposition \ref{priori3}. Then, it holds that
\begin{align}
\sup_{0\leq t\leq T}\|\sqrt{\r}\ud\|^2_{L^2}+\int_0^{T}\|\nabla \ud\|^2_{L^2}dt\leq C(g)\label{est8}
\end{align}
and
\begin{align}
\|u\|_{L^\iy}\leq C(g)\label{estiy}
\end{align}
for a positive constant $C(g)$ independent of $T$.
\end{lemma}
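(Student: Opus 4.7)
The plan is to apply \eqref{est2} with $\eta\equiv 1$, integrate on $[0,t]$, and close a Gronwall estimate, with the initial value $\|\sqrt{\r_0}\,\ud(0)\|_{L^2}$ supplied by the compatibility condition \eqref{datab3}. To this end, I first evaluate $\sqrt{\r_0}\,\ud(0)$: since $\ud=u_t+f\cdot\nabla u+h$, setting $t=0$ in $\eqref{eqb}_2$ and using \eqref{datab3} to cancel $-2\mu\div\D(u_0)-\l\nabla\div u_0+\nabla P(\r_0)$ against $-\mu\lap b(0)-\r_0^{1/2}g$ yields
\begin{align*}
\sqrt{\r_0}\,\ud(0)=-g-\sqrt{\r_0}\,u_0\cdot\nabla b(0)-\sqrt{\r_0}\,(\vo(0)\times u_0)+\sqrt{\r_0}\,F(b)|_{t=0}+\sqrt{\r_0}\,h(0),
\end{align*}
and each term on the right is bounded in $L^2$ using \eqref{be0}--\eqref{bex}, \eqref{datab2}, \eqref{mass}, Sobolev's inequality and the definition \eqref{defh} of $h$, producing $\|\sqrt{\r_0}\,\ud(0)\|_{L^2}\leq C(g)$.

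Next I would take $\eta\equiv 1$ in \eqref{est2} and integrate over $[0,t]$. Writing $Y(t)=\|\sqrt{\r}\,\ud\|^2_{L^2}$ and using \eqref{rhoiy} to bound $\|\r\ud\|_{L^2}\leq C\|\sqrt{\r}\,\ud\|_{L^2}$, the resulting inequality takes the schematic form
\begin{align*}
Y(t)+\int^t_0\|\nabla\ud\|^2_{L^2}ds\leq Y(0)+\int^t_0 A(s)Y(s)\,ds+\int^t_0 B(s)\,ds+C\int^t_0\|\nabla u\|_{L^2}Y^{3/2}(s)\,ds,
\end{align*}
where $A(s)$ gathers the $\omega$-factors $|\o|+|\frac{d\o}{dt}|+|\frac{d^2\o}{dt^2}|$, which lie in $L^1(0,\iy)$ since $\o\in W^{4,1}(0,\iy)$, and $B(s)$ gathers $\|\nabla u\|^2_{L^2}+\|\nabla u\|^4_{L^2}$ together with the $\omega$-weighted small terms, its time integral being controlled by \eqref{energy}, \eqref{est3}, \eqref{est4}. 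The main obstacle is the cubic term $\|\nabla u\|_{L^2}Y^{3/2}$. I would split $[0,T]=[0,1]\cup[1,T]$: on $[1,T]$ (where $\sg=1$), \eqref{est5} gives $Y\leq C\ep^{1/2}$ and \eqref{est4} gives $\|\nabla u\|^2_{L^2}\leq\ep^{1/2}$, so $\|\nabla u\|_{L^2}Y^{3/2}\leq C\ep^{1/4}\|\nabla u\|_{L^2}Y$ is linear in $Y$ and absorbable into the Gronwall coefficient; on $[0,1]$, the weighted control $\sup_{(0,1]}\sg Y\leq C$ from \eqref{est7}, together with the finite starting value $Y(0)\leq C(g)$ and the continuity afforded by the local-existence class \eqref{localclass}, gives $\sup_{[0,1]}Y\leq C(g)$ via a short-time bootstrap, rendering the cubic linear in $Y$ there as well. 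Gronwall's inequality then yields \eqref{est8}.

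Finally, \eqref{estiy} follows by inserting the bounds from \eqref{energy} and \eqref{est8} into \eqref{uiy}: $\|u\|_{L^\iy}\leq C(\|\nabla u\|_{L^2}+\|\sqrt{\r}\,\ud\|_{L^2}+\|\sqrt{\r}\,u\|_{L^2}+\ep^{1/6})\leq C(g)$, with every ingredient uniformly bounded in $T$.
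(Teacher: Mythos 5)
Your overall strategy --- computing $\sqrt{\r}\ud(0)$ from the compatibility condition \eqref{datab3}, taking $\eta=1$ in \eqref{est2}, splitting the time interval at $t=\sg(T)$, and handling $t\geq 1$ with \eqref{est5} while concluding \eqref{estiy} from \eqref{uiy} --- is exactly the paper's strategy, and your evaluation of the initial value is correct (your sign $+\sqrt{\r_0}h(0)$ is the right one; the paper's displayed formula carries a sign slip there, harmless for the estimate). The gap is in your treatment of the cubic term on $[0,1]$, which is the only delicate step of the lemma. The weighted bound $\sup_{(0,\sg(T)]}\big(\sg\|\sqrt{\r}\ud\|^2_{L^2}\big)\leq C$ from \eqref{est7} degenerates as $t\to 0^+$ and gives no pointwise control of $Y(t)=\|\sqrt{\r}\ud\|^2_{L^2}$ near $t=0$, and ``continuity afforded by the local-existence class'' only controls $Y$ on the unquantified, solution-dependent interval $[0,T_*]$, not on all of $[0,1]$ with a constant depending only on the data. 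So the ``short-time bootstrap'' as you state it does not close the argument.

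The mechanism that does work --- and is what the paper uses --- requires neither Gronwall nor a bootstrap on $[0,\sg(T)]$: by \eqref{condition} and \eqref{est3} one already knows $\|\nabla u\|_{L^2}\leq C(K)$ on $(0,\sg(T)]$ \emph{and} $\int_0^{\sg(T)}\|\sqrt{\r}\ud\|^2_{L^2}\,ds\leq 2K$, whence
\begin{align*}
\int_0^{t}\|\nabla u\|_{L^2}\|\r\ud\|^3_{L^2}\,ds
\leq C(K,\br)\sup_{(0,\sg(T)]}\|\sqrt{\r}\ud\|_{L^2}\int_0^{\sg(T)}\|\sqrt{\r}\ud\|^2_{L^2}\,ds
\leq \frac{1}{2}\sup_{(0,\sg(T)]}\|\sqrt{\r}\ud\|^2_{L^2}+C(K,\br),
\end{align*}
and the supremum is absorbed into the left-hand side of the integrated inequality; the term $\int_0^t(|\o|+|\frac{d\o}{dt}|+|\frac{d^2\o}{dt^2}|)\|\sqrt{\r}\ud\|^2_{L^2}\,ds$ is disposed of the same way. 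You invoke \eqref{est3} only for the benign $B(s)$ contributions; applying it to the cubic term as above is the missing step. With that replacement, the rest of your argument (the $[1,T]$ segment via \eqref{est5} and the deduction of \eqref{estiy}) goes through as written.
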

\begin{proof}
It suffices to show
\begin{align}
\sup_{t\in(0,\sg(T)]}\|\sqrt{\r}\ud\|^2_{L^2}+\int_0^{\sg(T)}\|\nabla \ud\|^2_{L^2}dt\leq C(g).\label{sgTT}
\end{align}
From the compatibility condition \eqref{datab3}, we can define
\begin{align}
\sqrt{\r}\ud(x,t=0)=-g-\sqrt{\r_0}\vo(0)\times u_0-\sqrt{\r_0}u_0\cdot\nabla b(0)+\sqrt{\r_0}F(b(0))-\sqrt{\r_0}h(x,t=0),\label{time0}
\end{align}
which, together with \eqref{be0}--\eqref{be2}, \eqref{mass} and \eqref{wiy}, leads to
\begin{align}
\|\sqrt{\r}\ud(t=0)\|_{L^2}
\leq&C\big(\|g\|_{L^2}+\|\r^{\frac{1}{2}}_0\|_{L^3}\|u_0\|_{L^6}+\|\r_0^{\frac{1}{2}}\|_{L^3}\|u_0\|_{L^6}\|\nabla b(0)\|_{L^\iy} \notag\\
&+\|\r_0^{\frac{1}{2}}\|_{L^2}\|F(b(0))\|_{L^\iy}+\|\r_0^{\frac{1}{2}}\|_{L^6}\|u_0\|^2_{L^6}\big)\notag\\
\leq&C\|g\|_{L^2}+C(\br,M)\ep^{\frac{7}{6}}.\label{time}
\end{align}

Thus, for $t\in(0,\sg(T)]$, taking $\eta=1$ in \eqref{est2} and integrating the resulting inequality over $(0,t)$, we get from \eqref{est3}, \eqref{time0}, \eqref{time} and Young's inequality that
\begin{align}
&\|\sqrt{\r}\ud\|^2_{L^2}+\int_0^{t}\|\nabla \ud\|^2_{L^2}ds\notag\\
\leq&C\|\sqrt{\r_0}g\|^2_{L^2}+C\ep+C(\br)\int_0^{t}(\|\nabla u\|_{L^2}+m_0^{\frac{1}{2}})\|\sqrt{\r}\ud\|^3_{L^2}ds\notag\\
\leq&C(g,\br)+C(\br,M)\sup_{t\in(0,\sg(T)]}(\|\sqrt{\r}\ud\|_{L^2})\int_0^{\sg(T)}\|\sqrt{\r}\ud\|^2_{L^2}ds\notag\\
\leq&C(g,\br,M)+\frac{1}{2}\sup_{t\in(0,\sg(T)]}(\|\sqrt{\r}\ud\|^2_{L^2}),\label{sgT}
\end{align}
which implies \eqref{sgTT}.

\eqref{sgTT}, together with the Lemma \ref{UD2}, gives \eqref{est8}. Moreover, \eqref{estiy} is a direct result from \eqref{uiy}, \eqref{energy}, \eqref{est3} \eqref{est4} and \eqref{est8}.

The proof is completed.
\end{proof}

Next, we consider the decomposition of the velocity $u=v+w$, where $v$ solves the elliptic system
\begin{equation}
\left.
\begin{cases}
2\mu\div\D(v)+\l\nabla\div v=\nabla (P(\r)-P(\tr))\,\,\,\,\,\,\,\,\,\mathrm{in}\,\,\Omega,\\
v\,\,\, \mathrm{satisfies}\,\,\eqref{nab2}\,\,\,\,\,\,\,\,\,\,\,\,\,\,\mathrm{on}\,\,\pt\Omega.
\end{cases}
\right.\label{eqV}
\end{equation}
Then, it follows from the momentum equations $\eqref{eq}_2$ and \eqref{eqV} that $w$ satisfies
\begin{equation}
\left.
\begin{cases}
2\mu\div\D(w)+\l\nabla\div w=G\,\,\,\,\,\,\,\,\,\mathrm{in}\,\,\Omega,\\
w\,\,\, \mathrm{satisfies}\,\,\eqref{nab2}\,\,\,\,\,\,\,\,\,\,\,\,\,\,\mathrm{on}\,\,\pt\Omega.
\end{cases}
\right.\label{eqW}
\end{equation}

Based on the Lemma \ref{exterior}, we show some estimates on $v$ and $w$ as follows:
\begin{lemma}\label{VW}
Let $(\r,u)$ be a smooth solution to \eqref{eqb} on $\Omega\times(0,T]$. If $v$ and $w$ satisfy \eqref{eqV} and \eqref{eqW}, respectively, then for $p\in [2,6]$, the following estimates hold:
\begin{align}
\|v\|_{L^6}\leq& C\|P(\r)\|_{L^2},\label{VW0}\\
\|\nabla v\|_{L^p}\leq& C(\|P(\r)\|_{L^p}+\|P(\r)\|_{L^2}),\label{VW1}\\
\|\nabla^2 v\|_{L^p}\leq& C(\|\nabla P(\r)\|_{L^p}+\|P(\r)\|_{L^p}+\|P(\r)\|_{L^2}),\label{VW2}\\
\|\nabla w\|_{H^1}\leq& C(\|G\|_{L^2}+\|\nabla u\|_{L^2}+\|P(\r)\|_{L^2}),\label{VW3}\\
\|\nabla^2 w\|_{L^p}\leq& C(\|G\|_{L^p}+\|G\|_{L^2}+\|\nabla u\|_{L^2}+\|P(\r)\|_{L^2}).\label{VW4}
\end{align}
\end{lemma}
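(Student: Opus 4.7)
The plan is to handle the two components $v$ and $w$ separately, in each case bootstrapping from an $L^2$-energy bound to the $L^p$ and $D^{2,p}$ bounds by means of the elliptic regularity results in Lemma \ref{exterior}, with the decomposition identity $w = u - v$ transferring information between the two systems.

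For the system \eqref{eqV} governing $v$, I would begin by testing with $v$ and integrating by parts. The boundary condition \eqref{nab2} makes the computation clean: since $v\cdot\n = 0$ on $\pt\Omega$, the $\l\div v (v\cdot\n)$ boundary term vanishes, while $2\mu v\cdot\D(v)\n = 2\mu v_\tau\cdot(\D(v)\n)_\tau = -2\mu\a|v|^2$ on $\pt\Omega$. The resulting identity
\begin{equation*}
2\mu\|\D(v)\|_{L^2}^2 + \l\|\div v\|_{L^2}^2 + 2\mu|\sqrt{\a}v|_{L^2(\pt\Omega)}^2 = \int_{\Omega}(P(\r)-P(\tr))\div v\,dx
\end{equation*}
then yields, after Cauchy-Schwarz, Young's inequality and the Korn-type inequality of Lemma \ref{korn}, the bound $\|\nabla v\|_{L^2} \leq C\|P(\r)\|_{L^2}$. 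Lemma \ref{sob} (applicable because $v\cdot\n = 0$) then delivers \eqref{VW0}. For \eqref{VW1} I would apply \eqref{ell2} of Lemma \ref{exterior} with $f = (P(\r)-P(\tr))\mathrm{I}_3$, so that $\nabla(P(\r)-P(\tr)) = \div f$, obtaining $\|\nabla v\|_{L^p} \leq C(\|P(\r)\|_{L^p} + \|v\|_{L^6})$ and then using \eqref{VW0}. For \eqref{VW2} I invoke \eqref{ell1} directly on \eqref{eqV}, giving $\|v\|_{D^{2,p}} \leq C(\|\nabla P(\r)\|_{L^p} + \|\nabla v\|_{L^p})$, and combine with \eqref{VW1}.

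For the system \eqref{eqW} governing $w$, I would apply \eqref{ell1} of Lemma \ref{exterior} to obtain $\|w\|_{D^{2,p}} \leq C(\|G\|_{L^p} + \|\nabla w\|_{L^p})$. The key is to estimate $\|\nabla w\|_{L^p}$ using the decomposition $w = u - v$. The triangle inequality combined with \eqref{VW1} and the already-established bound \eqref{FW4} from Lemma \ref{FW} gives
\begin{equation*}
\|\nabla w\|_{L^2} \leq \|\nabla u\|_{L^2} + \|\nabla v\|_{L^2} \leq C(\|\nabla u\|_{L^2} + \|P(\r)\|_{L^2}).
\end{equation*}
Taking $p=2$ in the elliptic estimate, adding $\|\nabla w\|_{L^2}$, gives \eqref{VW3}. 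For \eqref{VW4}, Sobolev embedding (or Lemma \ref{gn}) supplies $\|\nabla w\|_{L^p} \leq C\|\nabla w\|_{H^1}$ for $p\in[2,6]$, which \eqref{VW3} already controls, so $\|\nabla^2 w\|_{L^p} \leq C(\|G\|_{L^p} + \|G\|_{L^2} + \|\nabla u\|_{L^2} + \|P(\r)\|_{L^2})$ as desired.

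I do not anticipate any real obstacle here: the entire argument is routine linear elliptic theory built on top of the preparatory Lemmas \ref{sob}, \ref{korn} and \ref{exterior}. The only bookkeeping subtlety is the careful reduction of the two boundary terms in the energy identity for $v$, which, as noted, is made transparent by the Navier-slip condition; everything else is application of the already-established tools together with the linearity-based decomposition $u = v + w$.
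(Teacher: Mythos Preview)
Your proposal is correct and follows essentially the same route as the paper: an energy identity for $v$ yielding $\|\nabla v\|_{L^2}\leq C\|P(\rho)\|_{L^2}$, then Lemma~\ref{sob} for \eqref{VW0}, Lemma~\ref{exterior} parts \eqref{ell2} and \eqref{ell1} for \eqref{VW1}--\eqref{VW2}, and finally the decomposition $w=u-v$ together with \eqref{ell1} and Sobolev embedding for \eqref{VW3}--\eqref{VW4}. The only cosmetic point is that your reference to \eqref{FW4} is unnecessary, since the displayed bound on $\|\nabla w\|_{L^2}$ uses only the triangle inequality and the $L^2$ bound on $\nabla v$.
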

\begin{proof}
Multiplying \eqref{eqV} by $v$, we obtain, after using the method of integration by parts, that
\begin{align}
\|\nabla v\|_{L^2}\leq C\|P(\r)\|_{L^2},\label{VL2}
\end{align}
which, together with Sobolev's inequality \eqref{sobolev}, gives \eqref{VW0}.

Applying the Lemma \ref{sob} and \ref{exterior}, and using \eqref{VL2}, we have
\begin{align}
\|\nabla v\|_{L^p}\leq& C(\|P(\r)\|_{L^p}+\|v\|_{L^6})\leq C(\|P(\r)\|_{L^p}+\|\nabla v\|_{L^2})\notag\\
\leq &C(\|P(\r)\|_{L^p}+\|P(\r)\|_{L^2}),\\
\|\nabla^2 v\|_{L^p}\leq &C(\|\nabla P(\r)\|_{L^p}+\|\nabla v\|_{L^p})\notag\\
\leq& C(\|\nabla P(\r)\|_{L^p}+\|P(\r)\|_{L^p}+\|P(\r)\|_{L^2}),\\
\|\nabla w\|_{H^1}\leq& C(\|\nabla^2 w\|_{L^2}+\|\nabla w\|_{L^2})\leq C(\|G\|_{L^2}+\|\nabla w\|_{L^2})\notag\\
\leq& C(\|G\|_{L^2}+\|\nabla u\|_{L^2}+\|\nabla v\|_{L^2})\notag\\
\leq& C(\|G\|_{L^2}+\|\nabla u\|_{L^2}+\|P(\r)\|_{L^2}),\\
\|\nabla^2 w\|_{L^p}\leq& C(\|G\|_{L^p}+\|\nabla w\|_{L^p})\leq C(\|G\|_{L^p}+\|\nabla w\|_{H^1})\notag\\
\leq& C(\|G\|_{L^p}+\|G\|_{L^2}+\|\nabla u\|_{L^2}+\|P(\r)\|_{L^2}).
\end{align}

The proof is completed.
\end{proof}

With the Lemma \ref{UD4} at hand, we now derive the estimate on the spatial gradient for $(\r,u)$.
\begin{lemma}\label{rp}
Let $(\r,u)$ be a smooth solution to \eqref{eqb} on $\Omega\times(0,T]$ satisfying \eqref{condition}. Suppose that $\o\in W^{4,1}(0,\iy)$, and suppose that \eqref{datab1}--\eqref{datab3} hold. In addition, assume that $m_0\leq \ep^7$ and $\ep\leq \e$ as in the Proposition \ref{priori3}. Then, it holds that
\begin{align}
\sup_{0\leq t\leq T}(\|\nabla\r\|_{L^2\cap L^6}+\|\nabla^2 u\|_{L^2})+\int^T_0\|\nabla u\|_{L^\iy}dt\leq C(T).\label{est9}
\end{align}
\end{lemma}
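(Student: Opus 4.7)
I would follow the Huang--Li--Xin strategy adapted to the slip boundary: close a logarithmic Gronwall inequality for $\|\nabla\r\|_{L^6}$ by combining the elliptic decomposition $u=v+w$ of Lemma \ref{VW} with a Brezis--Waigner bound on $\|\nabla u\|_{L^\iy}$ (Lemma \ref{BW}), exploiting the $L^2$-integrability of $\|\nabla\ud\|_{L^2}^2$ already supplied by Lemma \ref{UD4}.

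First I would apply $\nabla$ to the continuity equation $\eqref{eqb}_1$, multiply by $|\nabla\r|^{p-2}\nabla\r$ for $p\in\{2,6\}$, and integrate over $\Omega$. The transport boundary term vanishes because $(u+b-\vo\times x)\cdot\n=0$ on $\pt\Omega$, and because $\div(b-\vo\times x)=0$ in $\Omega$ only $\div u$ survives after integration by parts. Using Lemma \ref{density}, the $L^\iy$-bounds \eqref{be1}, and \eqref{wiy}, I obtain
\begin{equation*}
\frac{d}{dt}\|\nabla\r\|_{L^p}\leq C(1+\|\nabla u\|_{L^\iy})\|\nabla\r\|_{L^p}+C\|\nabla^2 u\|_{L^p}.
\end{equation*}
For $\|\nabla^2 u\|_{L^p}$, decomposing $u=v+w$ via \eqref{eqV}--\eqref{eqW} and invoking Lemma \ref{VW} gives $\|\nabla^2 u\|_{L^p}\leq C(\|\nabla\r\|_{L^p}+\|G\|_{L^p}+1)$, while the same kind of manipulation as in \eqref{G6}, together with Lemma \ref{UD4}, yields $\|G\|_{L^2}\leq C$ and $\|G\|_{L^6}\leq C(1+\|\nabla\ud\|_{L^2})$.

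The crucial step, and main obstacle, is controlling $\|\nabla u\|_{L^\iy}$. For the piece $w$, the Sobolev embedding $W^{1,6}(\Omega)\hookrightarrow L^\iy(\Omega)$ combined with the estimates above gives $\|\nabla w\|_{L^\iy}\leq C(1+\|\nabla\ud\|_{L^2})$. For $v$, a direct Sobolev bound is circular since $\|\nabla^2v\|_{L^6}$ controls only $\|\nabla\r\|_{L^6}$; here I invoke Lemma \ref{BW} with $q=6$:
\begin{equation*}
\|\nabla v\|_{L^\iy}\leq C\big(1+\|\nabla v\|_{BMO}\ln(e+\|\nabla^2 v\|_{L^6})\big).
\end{equation*}
Writing $\nabla P(\r)=\div\big((P(\r)-P(\tr))\mathrm{I}_3\big)$ and applying Lemma \ref{exterior}(3) to \eqref{eqV}, combined with the uniform bounds $\|P(\r)\|_{L^\iy\cap L^2}\leq C$ (from Lemmas \ref{Energy} and \ref{density}) and $\|v\|_{L^\iy}\leq C\|v\|_{W^{1,6}}\leq C$ (from Lemma \ref{VW} and Sobolev), yields $\|\nabla v\|_{BMO}\leq C$. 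Hence
\begin{equation*}
\|\nabla u\|_{L^\iy}\leq C\big(1+\|\nabla\ud\|_{L^2}+\ln(e+\|\nabla\r\|_{L^6})\big).
\end{equation*}

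Inserting these bounds into the $p=6$ ODE and setting $y(t)\triangleq e+\|\nabla\r\|_{L^6}$, I arrive at
\begin{equation*}
y'(t)\leq C\big(1+\|\nabla\ud(t)\|_{L^2}\big)\,y(t)\ln y(t),
\end{equation*}
so $\frac{d}{dt}\ln\ln y\leq C(1+\|\nabla\ud\|_{L^2})$. Cauchy--Schwarz together with $\int_0^T\|\nabla\ud\|_{L^2}^2\,dt\leq C$ from Lemma \ref{UD4} gives $\ln\ln y(t)\leq C(T)$, so $\|\nabla\r\|_{L^6}\leq C(T)$; consequently $\int_0^T\|\nabla u\|_{L^\iy}\,dt\leq C(T)$. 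With $\|\nabla u\|_{L^\iy}\in L^1(0,T)$ in hand, standard Gronwall applied to the $p=2$ ODE yields $\|\nabla\r\|_{L^2}\leq C(T)$, and finally Lemma \ref{VW} with $p=2$ gives $\|\nabla^2 u\|_{L^2}\leq C(\|\nabla\r\|_{L^2}+\|G\|_{L^2}+1)\leq C(T)$. The delicate point throughout is precisely the log-closure for $\|\nabla\r\|_{L^6}$: the divergence-form identification of $\nabla P(\r)$ is what activates Lemma \ref{exterior}(3) and thereby breaks the circular dependence between $\nabla v$ in $L^\iy$ and $\nabla\r$ in $L^6$.
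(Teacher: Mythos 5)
Your proposal is correct and follows essentially the same route as the paper: the transport ODE for $\|\nabla\r\|_{L^p}$, the decomposition $u=v+w$ with the Brezis--Waigner/BMO estimate breaking the circularity for $\|\nabla v\|_{L^\iy}$, and a logarithmic Gronwall argument driven by $\int_0^T\|\nabla\ud\|_{L^2}^2\,dt\leq C$ from Lemma \ref{UD4}. The only cosmetic difference is that you close the Gronwall inequality via $\frac{d}{dt}\ln\ln y$ whereas the paper first divides by $\|\nabla\r\|_{L^6}+e$ and applies Gronwall to $\ln(\|\nabla\r\|_{L^6}+e)$; these are equivalent.
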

\begin{proof}
In the following, the constant $C$ may depend on $p$.

For $2\leq p\leq 6$, applying the operator $\nabla$ to $\eqref{eqb}_1$ and dot-multiplying the resulting equation by $p|\nabla \r|^{p-2}\nabla \r$, we obtain, after using the method of integration by parts, that
\begin{align}
\frac{d}{dt}\int_{\Omega}|\nabla \r|^pdx=&(1-p)\int_{\Omega}|\nabla \r|^p\div udx-p\int_{\Omega}|\nabla \r|^{p-2}\nabla\r\cdot\nabla f\cdot\nabla \r dx\notag\\
&-p\int_{\Omega}\r|\nabla \r|^{p-2}\nabla \r\cdot\nabla(\div u)dx\notag\\
\leq &C\int_{\Omega}|\nabla \r|^p(|\nabla u|+1)dx+C\int_{\Omega}|\nabla \r|^{p-1}|\nabla^2 u|dx\notag\\
\leq &C(\|\nabla u\|_{L^\iy}+1)\|\nabla \r\|^p_{L^p}+C\|\nabla \r\|^p_{L^p}\|\nabla^2 u\|_{L^p},\label{rhop}
\end{align}
where we have used \eqref{be1} and \eqref{wiy}.

\eqref{rhop} implies
\begin{align}
\frac{d}{dt}\|\nabla \r\|_{L^p}\leq C\big((\|\nabla u\|_{L^\iy}+1)\|\nabla \r\|_{L^p}+\|\nabla^2 u\|_{L^p}\big).\label{T0}
\end{align}

For the case that $p=6$, applying the Lemma \ref{exterior}--\ref{BW} and the Lemma \ref{VW}, together with using \eqref{FW4}, \eqref{estG}, \eqref{G6}, \eqref{condition} and \eqref{energy}, we can estimate the terms on the right side of the above inequality as follows:
\begin{align}
&\|\nabla^2 u\|_{L^6}\leq C(\|\nabla P(\r)\|_{L^p}+\|G\|_{L^6}+\|\nabla u\|_{L^6})\notag\\
\leq&C(\|\nabla P(\r)\|_{L^6}+\|\nabla \ud\|_{L^2}+\|\sqrt{\r}\ud\|_{L^2}+\|\sqrt{\r}u\|_{L^2}+\|\nabla u\|_{L^2}+\ep^{\frac{1}{6}})\notag\\
\leq&C(\|\nabla\r\|_{L^p}+\|\nabla\ud\|_{L^2}+\|\nabla u\|_{L^2}+1)\label{T1}
\end{align}
and
\begin{align}
&\|\nabla u\|_{L^\iy}\leq C(\|\nabla v\|_{L^\iy}+\|\nabla w\|_{L^\iy})\notag\\
\leq &C(\|\nabla v\|_{L^\iy}+\|\nabla w\|_{L^6}+\|\nabla^2 w\|_{L^6})\notag\\
\leq &C\big(1+\|\nabla v\|_{BMO}\ln(e+\|\nabla^2 v\|_{L^6})+\|\nabla v\|_{L^6}+\|\nabla^2 w\|_{L^6}\big)\notag\\
\leq &C\big[1+(\|P(\r)\|_{L^\iy}+\|P(\r)\|_{L^2}+\|v\|_{L^\iy})\ln(e+\|\nabla P(\r)\|_{L^6}+\|P(\r)\|_{L^p}+\|P(\r)\|_{L^2})\notag\\
&+\|P(\r)\|_{L^6}+\|\nabla u\|_{L^6}+\|G\|_{L^6}+\|G\|_{L^2}+\|\nabla u\|_{L^2}+\|P(\r)\|_{L^2}\big].\notag\\
\leq&C\big(1+\ln(e+\|\nabla \r\|_{L^p})+\|\nabla\ud\|_{L^2}+\|\nabla u\|_{L^2}\big),\label{T2}
\end{align}
where we have used the fact from Sobolev's inequality and \eqref{VW0}--\eqref{VW1} that
\begin{align}
\|v\|_{L^\iy}\leq& C\|v\|_{W^{1,6}}\leq C(\|\nabla v\|_{L^6}+\|v\|_{L^6})\leq C(\|P(\r)\|_{L^6}+\|P(\r)\|_{L^2})\leq C.\label{T3}
\end{align}
Substituting \eqref{T1} and \eqref{T2} into \eqref{T0}, we have
\begin{align}
\frac{d}{dt}(\|\nabla \r\|_{L^6}+e)
\leq& C(1+\|\nabla\ud\|_{L^2}+\|\nabla u\|_{L^2})\ln(e+\|\nabla \r\|_{L^6})\|\nabla \r\|_{L^6}\notag\\
&+C(\|\nabla\ud\|_{L^2}+\|\nabla u\|_{L^2}+1)(\|\nabla \r\|_{L^6}+e).\label{T4}
\end{align}
Both sides of \eqref{T4} divided by $\|\nabla \r\|_{L^6}+e$ lead to
\begin{align}
&\frac{d}{dt}\ln(\|\nabla \r\|_{L^6}+e)\notag\\
\leq& C(1+\|\nabla\ud\|_{L^2}+\|\nabla u\|_{L^2})\ln(e+\|\nabla \r\|_{L^6})+C(\|\nabla\ud\|_{L^2}+\|\nabla u\|_{L^2}+1).\label{T5}
\end{align}
Therefore, \eqref{T5}, together with \eqref{energy}, \eqref{est8} and Gronwall's inequality, gives that
\begin{align}
\sup_{0\leq t\leq T}(\|\nabla\r\|_{L^6})\leq C(T). \label{T6}
\end{align}

As a consequence of \eqref{energy}, \eqref{est8}, \eqref{T2} and \eqref{T6}, we have
\begin{align}
\int_0^T\|\nabla u\|_{L^\iy}dt\leq C(T). \label{T7}
\end{align}

For the case that $p=2$, by virtue of \eqref{T7}, we can use a similar argument as above to obtain that
\begin{align}
\sup_{0\leq t\leq T}(\|\nabla\r\|_{L^2})\leq C(T).\label{T8}
\end{align}

Moreover, it follows from the Lemma \ref{exterior} and the Proposition \ref{priori3}, \eqref{estG}, \eqref{est8} and \eqref{T8} that
\begin{align}
\|\nabla^2 u\|_{L^2}\leq& C(\|G\|_{L^2}+\|\nabla P(\r)\|_{L^2}+\|\nabla u\|_{L^2})\leq C(T).
\end{align}

The proof is completed.
\end{proof}

To obtain the estimate of $u_t$, we derive the upper bound for $|x|\r(x,t)$ as follows:
\begin{lemma}\label{support}
Let $(\r,u)$ be a smooth solution to \eqref{eqb} on $\Omega\times(0,T]$ satisfying \eqref{condition} for $K$ as in the Lemma \ref{EST3}. Suppose that $\o\in W^{4,1}(0,\iy)$, and suppose that \eqref{datab1}--\eqref{datab3} hold. In addition, assume that $m_0\leq \ep^7$ and $\ep\leq \e$ as in the Lemma \ref{density}. Then, it holds that
\begin{align}
\sup_{0\leq t\leq T}\||\cdot|\r^{\frac{1}{2}}\|_{L^\iy}\leq C(T,g,\||\cdot|\r_0^{\frac{1}{2}}\|_{L^\iy}).\label{rhox}
\end{align}

Moreover, if $\r_0$ has a compact support such that $\mathrm{supp}\r_0\subset B_{R_0}$ for $R_0>1$, then $\rho(t)$ has a compact support for $t \in (0,T)$ and satisfies
\begin{align}
sup\{ |x|: \r(x,t)>0\}\leq R_{0}+Ct,\label{compact1}
\end{align}
where $C$ is a positive constant independent of $T$.
\end{lemma}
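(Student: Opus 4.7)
The plan is to prove both bounds by the method of characteristics applied to the transport equation $\eqref{eqb}_1$. Define the characteristic flow $X(t;x_0)$ by
\begin{equation*}
\frac{dX}{dt}=u(X,t)+b(X,t)-\vec{\omega}(t)\times X,\qquad X(0;x_0)=x_0.
\end{equation*}
Along this flow, $\eqref{eqb}_1$ reduces to the ODE $\frac{d}{dt}\rho(X(t),t)=-\rho(X(t),t)\,\div u(X(t),t)$, so
\begin{equation*}
\rho^{\frac{1}{2}}(X(t),t)=\rho_0^{\frac{1}{2}}(x_0)\exp\Bigl(-\tfrac{1}{2}\!\int_0^t\div u(X(s),s)\,ds\Bigr).
\end{equation*}

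The central observation, which bypasses the apparent blow-up at infinity from the rotation term, is that $X\cdot(\vec{\omega}\times X)\equiv 0$. Consequently,
\begin{equation*}
\frac{d}{dt}|X(t)|^2=2X(t)\cdot\bigl(u(X,t)+b(X,t)\bigr),
\end{equation*}
and hence $\frac{d}{dt}|X(t)|\leq \|u\|_{L^\infty}+\|b\|_{L^\infty}\leq C(g)+C\ep^{-1}\|\o\|_{L^\infty(0,\infty)}\leq C$, where I use \eqref{estiy} from Lemma \ref{UD4} and \eqref{be0} together with \eqref{wiy}. Integrating in $t$ gives the pointwise bound
\begin{equation*}
|X(t;x_0)|\leq |x_0|+Ct
\end{equation*}
for a constant $C$ independent of $T$.

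The compact-support claim \eqref{compact1} is now immediate: if $\rho_0(x_0)=0$ for $|x_0|>R_0$, then $\rho(X(t;x_0),t)=0$ along every such characteristic, and since $X(t;\cdot)$ is a diffeomorphism that maps $\{|x_0|\leq R_0\}$ into $\{|x|\leq R_0+Ct\}$, it follows that $\rho(x,t)=0$ for $|x|>R_0+Ct$. For \eqref{rhox}, I combine the flow bound with the characteristic formula: given $(x,t)$, trace back to $x_0$ with $X(t;x_0)=x$, so
\begin{equation*}
|x|\rho^{\frac{1}{2}}(x,t)\leq (|x_0|+Ct)\,\rho_0^{\frac{1}{2}}(x_0)\exp\Bigl(\tfrac{1}{2}\!\int_0^t\|\nabla u(\cdot,s)\|_{L^\infty}\,ds\Bigr).
\end{equation*}
The factor $|x_0|\rho_0^{\frac{1}{2}}(x_0)$ is controlled by $\||\cdot|\rho_0^{\frac{1}{2}}\|_{L^\infty}$, the factor $Ct\rho_0^{\frac{1}{2}}(x_0)$ by $Ct\br^{\frac{1}{2}}$, and the exponential by $\exp(C(T))$ via \eqref{est9} of Lemma \ref{rp}. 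Taking the supremum over $(x,t)\in\Omega\times[0,T]$ yields \eqref{rhox}.

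The only conceptual obstacle is the apparently unbounded drift $\vec{\omega}\times x$ in the characteristic ODE; the resolution is exactly the identity $X\cdot(\vec{\omega}\times X)=0$, which decouples the radial dynamics from the rotation. All other ingredients (the $L^\infty$ bound on $u$, the decay of $b$, and the $L^1_t L^\infty_x$ bound on $\nabla u$) are already established in the preceding lemmas.
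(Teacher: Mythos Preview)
Your proof is correct and follows essentially the same characteristic-flow argument as the paper. There is one minor difference worth noting: the paper observes not only that $(\vec\omega\times y)\cdot y=0$ but also that $b(y,s)\cdot y=0$ (which follows from the explicit form $b=-\tfrac12\nabla\times(\varphi|x|^2\vec\omega)$), so that the radial speed is bounded by $\|u\|_{L^\infty}$ alone; you instead carry $\|b\|_{L^\infty}\le C\ep^{-1}|\omega|$ via \eqref{be0}, which also yields a $T$-independent constant and is perfectly adequate for the stated conclusions.
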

\begin{proof}
For $(x,t)\in\Omega\times(0,T)$, $\r(x,t)$ can be expressed by
\begin{equation}
\r(x,t)=\r_0\big(y(0;x,t)\big)\exp\big[-\int_{0}^{t}\div u(y(s;x,t),s)ds\big],\label{rhoy}
\end{equation}
where $y=y(s;x,t)$ is the solution to
\begin{equation}
\left.
\begin{cases}
\frac{\partial}{\partial s}y(s;x,t)=u\big(y(s;x,t),s\big)+b\big(y(s;x,t),s\big)-\omega\times y(s;x,t),   \,\,\,\,\, 0\leq s\leq t, \\
y(t;x,t)=x.
\end{cases}
\right.
\end{equation}
For $0\leq s\leq t$, we have
\begin{align}
&\frac{1}{2}\frac{d( |y(s;x,t)|^2)}{ds}=|\frac{d y(s;x,t)}{ds}\cdot y(s;x,t)|\notag\\
=&|u\big(y(s;x,t),s\big)\cdot y(s;x,t)+b\big(y(s;x,t),s\big)\cdot y(s;x,t)-\big(\o\times y(s;x,t)\big)\cdot y(s;x,t)|\notag\\
=&|u\big(y(s;x,t),s\big)\cdot y(s;x,t)|\notag\\
\leq& \|u(s)\|_{L^{\infty}}|y(s;x,t)|,\label{line}
\end{align}
where we have used the fact that $b\big(y(s;x,t),s\big)\cdot y(s;x,t)=0$.

It follows from \eqref{line} that
\begin{align}
\frac{d(|y(s;x,t)|)}{ds}\leq \|u(s)\|_{L^{\infty}},
\end{align}
which, together with \eqref{estiy}, leads to the fact that
\begin{equation}
|y(0;x,t)|\geq |x|-2C(g)T   \label{y0}
\end{equation}
for any $x\in \Omega$ such that $|x|\geq 1+3C(g)T$, where $C(g)$ is the same constant as in \eqref{estiy}.

Therefore, we can obtain \eqref{rhox} from \eqref{rhoy}, \eqref{rhoiy}, \eqref{T7} and \eqref{y0}. Moreover, \eqref{compact1} can be obtained in a similar way.

The proof is completed.
\end{proof}

We are now in a position to estimate the time-derivative of $u$.
\begin{lemma}\label{timederi}
Let $(\r,u)$ be a smooth solution to \eqref{eqb} on $\Omega\times(0,T]$ satisfying \eqref{condition} for $K$ as in the Lemma \ref{EST3}. Suppose that $\o\in W^{4,1}(0,\iy)$, and suppose that \eqref{datab1}--\eqref{datab3} hold. In addition, assume that $m_0\leq \ep^7$ and $\ep\leq \e$ as in the Lemma \ref{density}. Then, it holds that
\begin{align}
\sup_{0\leq t\leq T}\|\sqrt{\r}u_t\|^2_{L^2}+\int_0^T\|\nabla u_t\|^2_{L^2}dt\leq C(T).\label{test}
\end{align}
\end{lemma}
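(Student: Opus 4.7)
The plan is to differentiate the momentum equation $\eqref{eqb}_2$ in time and test against $u_t$, producing an energy identity for $\|\sqrt{\rho}u_t\|^2_{L^2}$ with dissipation on $\|\nabla u_t\|^2_{L^2}$, which Gronwall's inequality then closes. The previous lemmas provide all the ``input'' bounds, and the newly proved weight estimate $\||x|\sqrt{\rho}\|_{L^\infty}\leq C(T)$ in \eqref{rhox} is what makes the rotating term $\omega\times x$ manageable.

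\medskip

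First I would secure the initial value $\|\sqrt{\rho}u_t(0)\|_{L^2}$. From the compatibility condition \eqref{datab3} and the definition \eqref{uddef} of $\dot u$, at $t=0$ we may write
\[
\sqrt{\rho_0}\,u_t|_{t=0}=\sqrt{\rho_0}\,\dot u|_{t=0}-\sqrt{\rho_0}\,\bigl(f\cdot\nabla u_0+h(0)\bigr),
\]
and $\sqrt{\rho_0}\dot u|_{t=0}$ is exactly the quantity computed in \eqref{time0}--\eqref{time}. The three resulting contributions are controlled by $\|g\|_{L^2}$, $\||x|\rho_0^{1/2}\|_{L^\infty}$ and $\|u_0\|_{D^1\cap D^2}$, giving $\|\sqrt{\rho}u_t(0)\|_{L^2}\leq C(g)$.

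\medskip

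Next I would differentiate $\eqref{eqb}_2$ in $t$, multiply by $u_t$, and integrate over $\Omega$. Using the time-differentiated boundary conditions (which read $u_t\cdot\mathbf{n}=0$ and $(\mathbf{D}(u_t)\mathbf{n})_\tau+\alpha(u_t)_\tau=0$, since $\eqref{eqb}_3$ is time-independent) together with the same integrations by parts as in Lemma \ref{Energy}, one arrives at an identity of the form
\[
\frac{1}{2}\frac{d}{dt}\|\sqrt{\rho}u_t\|^2_{L^2}+2\mu\|\mathbf{D}(u_t)\|^2_{L^2}+\lambda\|\mathrm{div}\,u_t\|^2_{L^2}+2\mu|\sqrt{\alpha}u_t|^2_{L^2(\partial\Omega)}=\sum_{i}R_i,
\]
where the $R_i$ collect the time-derivative of the convective, rotational, pressure, forcing and $\rho_t$-generated terms (with $\rho_t=-\mathrm{div}(\rho f)$, so those pieces are further reorganized by integration by parts).

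\medskip

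The main obstacle is bounding the right-hand side, in particular the contributions of the form $\rho(\omega\times x)\cdot(\cdot)$ and $\rho f\cdot\nabla u_t$, both of which carry the linearly-growing factor $\omega\times x$. These are exactly controlled by \eqref{rhox} and the time-derivative estimates $\omega\in W^{4,1}$: each time a $|x|$-growing factor pairs with $\rho$ or $\sqrt{\rho}$, it is dominated by $|\omega|\||x|\sqrt{\rho}\|_{L^\infty}\leq C(T)$, after which the remaining factor is closed by the Sobolev inequality of Lemma \ref{sob} applied to $u_t$ (note $u_t\cdot\mathbf{n}=0$) and by previously established bounds on $\nabla u$, $\nabla^2 u$, $\nabla\rho$, $\|\sqrt{\rho}\dot u\|_{L^2}$ and $\|u\|_{L^\infty}$ from Lemmas \ref{UD4}, \ref{rp}, \ref{support}. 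Terms of the form $\delta\|\nabla u_t\|^2_{L^2}$ that emerge from Young's inequality are absorbed into the dissipation via Korn's inequality \eqref{korntype}, yielding
\[
\frac{d}{dt}\|\sqrt{\rho}u_t\|^2_{L^2}+C_0\|\nabla u_t\|^2_{L^2}\leq C_1(t)\|\sqrt{\rho}u_t\|^2_{L^2}+C_2(t),
\]
with $\int_0^T\bigl(C_1(t)+C_2(t)\bigr)dt\leq C(T)$. Gronwall's inequality then produces \eqref{test}.
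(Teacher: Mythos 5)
Your proposal is correct and follows essentially the same route as the paper: the paper likewise differentiates $\eqref{eqb}_2$ in time, tests against $u_t$, and singles out the weight bound \eqref{rhox} as the key ingredient for taming the linearly growing rotating terms (its only displayed computation is exactly the term $\int_{\Omega}\r\big((\tfrac{d\vo}{dt}\times x)\cdot\nabla u\big)\cdot u_t\,dx$ you identify), closing with Gronwall. Your treatment of the initial value of $\sqrt{\r}u_t$ via \eqref{datab3} and \eqref{time0}--\eqref{time} is also consistent with the paper's setup.
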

\begin{proof}
Differentiate the equation \eqref{eqb} with respect to $t$ and then dot-multiply the resulting equation with $u_t$. Then, by virtue of \eqref{rhox}, we can use the same method as in this section to derive \eqref{test} and we thus omit the details. Here, we remark that the estimate \eqref{rhox} is sufficient to control the rotating effect term which has singularity at infinity, for example,
\begin{align}
&\int_{\Omega}\r\big((\frac{d\vo}{dt}\times x)\cdot\nabla u\big)\cdot u_tdx\notag\\
\leq& C\||\cdot|\r^{\frac{1}{2}}\|_{L^\iy}\|\r^{\frac{1}{2}}\|_{L^2}\|\nabla u\|_{L^3}\|u_t\|_{L^6}\notag\\
\leq& C(T)\|\nabla u\|_{H^1}\|\nabla u_t\|_{L^2}\notag\\
\leq& \nu\|\nabla u_t\|^2_{L^2}+\frac{1}{\nu}C(T)
\end{align}
for any $\nu>0$, where we have used \eqref{mass}, \eqref{energy}, \eqref{wiy}, \eqref{est9} and \eqref{rhox}, and H\"{o}lder's inequality, Sobolev's inequality and Young's inequality.

The proof is completed.
\end{proof}

\subsection{Higher-order estimates}

For the completeness of our proof, we list below the higher-order estimates on the smooth solution $(\r,u)$, which can be derived in a similar manner as those obtained in \cite{HLX2012,LX2018,Matsumura2}.
\begin{proposition}\label{higher}
Let $(\r,u)$ be a smooth solution to \eqref{eqb} on $\Omega\times(0,T]$ satisfying \eqref{condition}. Suppose that $\o\in W^{4,1}(0,\iy)$ and suppose that \eqref{datab1}--\eqref{datab3} hold. In addition, assume that $m_0\leq \ep^7$ and $\ep\leq \e$ as in the Proposition \ref{priori3}. Then, it holds that
\begin{align}
&\sup_{0\leq t\leq T}(\|\r,P(\r)\|^2_{H^2\cap W^{2,6}}+\|\r_t,P_t\|^2_{H^1})+\int_0^T\|\r_{tt},P_{tt}\|^2_{L^2}dt\leq C(T),\label{hest1}\\
&\sup_{0\leq t\leq T}(\sg\|\nabla u_t\|^2_{L^2}+\sg\|\nabla u\|^2_{H^2})+\int_0^T(\|\nabla u\|^2_{H^2}+\sg\|\nabla u_t\|^2_{H^1})dt\leq C(T),\\
&\sup_{0\leq t\leq T}(\sg\|\nabla u_t\|^2_{H^1}+\sg\|\nabla u\|^2_{W^{2,6}})+\int_0^T\big(\sg\|\sqrt{\r}u_{tt}\|^2_{L^2}+\sg^2\|\nabla u_{tt}\|^2_{L^2}\big)dt\leq C(T).\label{hest3}
\end{align}
\end{proposition}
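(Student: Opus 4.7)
The plan is to bootstrap on top of the a priori bounds in Lemmas \ref{UD4}--\ref{timederi}, combining (i) elliptic regularity from Lemma \ref{exterior} applied to the momentum equation $\eqref{eqb}_2$, (ii) transport-type estimates for the density and its spatial derivatives from the continuity equation $\eqref{eqb}_1$, and (iii) time-differentiation of \eqref{eqb} followed by weighted energy arguments. A structural feature of the whole bootstrap is that the rotating source terms $(\vec{\omega}\times x)\cdot\nabla$ and $\r(u\cdot\nabla b)$ can be absorbed using the weighted density bound \eqref{rhox} together with the uniform estimates \eqref{be0}--\eqref{be2x} on $b$.

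For \eqref{hest1}, I would first apply $\nabla^k$ ($k=1,2$) to $\eqref{eqb}_1$, multiply by $|\nabla^k\r|^{p-2}\nabla^k\r$ for $p=2,6$, and integrate by parts in the spirit of \eqref{rhop}. The emerging right-hand side contains terms of the form $\|\nabla u\|_{L^\iy}\|\nabla^k\r\|_{L^p}^p$ and $\|\nabla^{k+1}u\|_{L^p}\|\nabla^k\r\|_{L^p}^{p-1}$, which are controlled via \eqref{est9} and Lemma \ref{exterior} applied to $\eqref{eqb}_2$ (now using \eqref{test} to bound $\|\sqrt{\r}u_t\|_{L^2}$). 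Gronwall's inequality closes the estimate for $\r$, and the same argument applied to the equation satisfied by $P(\r)$ yields its $H^2\cap W^{2,6}$-bound. The bounds on $\r_t,P_t\in H^1$ follow by differentiating $\eqref{eqb}_1$ in time and invoking the just-established spatial bounds together with \eqref{test}; one further time-differentiation yields the $L^2_tL^2_x$-bound on $\r_{tt},P_{tt}$.

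For the velocity estimates, I would differentiate $\eqref{eqb}_2$ in time, test against $\sg u_{tt}$, and integrate by parts. Careful treatment of the boundary contributions via \eqref{nab2} and the identities in Lemma \ref{calculation} then yields an inequality controlling $(\sg\|\nabla u_t\|_{L^2}^2)_t+\sg\|\sqrt{\r}u_{tt}\|_{L^2}^2$; Gronwall's inequality and integration in time produce the first half of the last two displayed bounds. Elliptic regularity from Lemma \ref{exterior} applied to $\eqref{eqb}_2$ then upgrades these to $\sg\|\nabla u\|_{H^2}^2$ and $\sg\|\nabla u\|_{W^{2,6}}^2$, using the bounds on $P(\r)$ just obtained. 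The weight $\sg$ is essential because only $\|\sqrt{\r}u_t(0)\|_{L^2}$, not $\|\nabla u_t(0)\|_{L^2}$, is controlled through the compatibility condition \eqref{datab3}; iterating with weight $\sg^2$ gives the $\sg\|\nabla u_t\|_{H^1}^2$ and $\sg^2\|\nabla u_{tt}\|_{L^2}^2$ bounds.

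The main obstacle is handling the rotating-effect terms that grow linearly in $|x|$: for instance, testing the time-differentiated momentum equation against $u_{tt}$ produces contributions such as $\int\r(\tfrac{d\vec{\omega}}{dt}\times x)\cdot\nabla u\cdot u_{tt}\,dx$ that do not close in a natural energy space without weighted information on $\r$ at infinity. As in Lemma \ref{timederi}, the decisive ingredient is $\||\cdot|\r^{1/2}\|_{L^\iy}\leq C(T)$ from \eqref{rhox}, which bounds such a term by $\||\cdot|\r^{1/2}\|_{L^\iy}\|\r^{1/2}\|_{L^2}\|\nabla u\|_{L^3}\|u_{tt}\|_{L^6}$ and permits absorption via Young's inequality after Sobolev embedding. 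Apart from this singularity at infinity, the argument is a direct adaptation of the higher-order schemes in \cite{HLX2012,LX2018,Matsumura2}; the non-flat boundary and non-constant $\n$ contribute only technical modifications already absorbed by the local-frame machinery of Lemma \ref{FW}.
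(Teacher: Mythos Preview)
Your proposal is correct and matches the paper's approach. The paper itself does not give a proof of this proposition; it merely states that the estimates ``can be derived in a similar manner as those obtained in \cite{HLX2012,LX2018,Matsumura2}'', and your outline is precisely a fleshing-out of that referenced scheme---transport estimates on $\nabla^k\rho$ and $\nabla^k P(\rho)$ via Gronwall, time-differentiation of the momentum equation tested against $\sigma u_{tt}$ (and then $\sigma^2 u_{tt}$), and elliptic regularity to upgrade---together with the correct identification of the rotating-effect terms as the only new obstacle and of \eqref{rhox} as the tool that closes them.
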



\subsection{Proof of the theorem \ref{main}}
Finally, we are in a position the show the main theorem. The proof is similar to that of the Theorem 1.1 in \cite{HLX2012} and we just sketch the outline of it below (see \cite{HLX2012,Duan2012,YZ2017,LX2018} for more details).

{\bf \hspace{-1em}Proof of Theorem \ref{main}.}
There exists a sequence of $\{\r_0^{R_i}\}$ with compact supports in $\overline{\Omega_{R_i}}$, respectively, such that $\r_0^{R_i}\rightarrow\r_0$ in $L^1\cap H^2\cap W^{2,6}$ with
\begin{align}
\|\r_0^{R_i}-\r_0\|_{L^1\cap H^2\cap W^{2,6}}\leq \frac{1}{R_i}\,\,\,\,\mathrm{as}\,\,\,\,R_i\,\rightarrow\,\iy, \label{ri}
\end{align}
\begin{align}
\||\cdot|(\r_0^{R_i})^{\frac{1}{2}}\|_{L^\iy}\leq \||\cdot|\r_0^{\frac{1}{2}}\|_{L^\iy}+1.\label{ri2}
\end{align}

Let $u^{R_i}_0\in D^1\cap D^2$ solve the elliptic system
\begin{equation}
\left.\begin{cases}
-\mu\lap u^{R_i}_0-(\mu+\l)\nabla\div u^{R_i}_0+\nabla P(\r^{R_i}_0)=(\r^{R_i}_0)^{\frac{1}{2}}g+\mu\lap b\,\,\,\,\mathrm{in}\,\,\Omega,\\
u^{R_i}_0\,\,\,\,\, \mathrm{satisfies}\,\,\eqref{nab2}\,\,\,\,\mathrm{on}\,\,\partial\Omega.
\end{cases}
\right.
\end{equation}
Here, the existence of $u^{R_i}_0\in D^1\cap D^2$ is guaranteed by \cite{ADN1959,ADN1964} and the Lemma \ref{sob}. It follows from the standard theory of elliptic system that $u^{R_i}_0\rightarrow U_0-b$ in $D^1\cap D^2$ with
\begin{align}
\|u^{R_i}_0-(U_0-b)\|_{D^1\cap D^2}\leq C\frac{1}{R_i}\,\,\,\,\mathrm{as}\,\,\,\,R_i\,\rightarrow\,\iy\label{ui}
\end{align}
for some positive constant $C$.

Due to the Proposition \ref{Local}, for each $R_i$, there exists a unique local classical solution $(\r^{R_i}, u^{R_i})$ to the equation \eqref{eqb} with initial data $(\r^{R_i}_0,u^{R_i}_0)$. Then, with the help of the Propositions \ref{priori3}--\ref{higher} and the Lemma \ref{rp}--\ref{timederi}, we can use the continuity argument as in \cite{HLX2012,LX2018} to extend the local classical solution $(\r^{R_i}, u^{R_i})$ to a global one which we still denote by $(\r^{R_i}, u^{R_i})$, provided that $R_i$ is suitably large and the initial mass $\int_{\Omega}\r_0dx$ is sufficiently small. Moreover, by \eqref{ri}, \eqref{ri2} and \eqref{ui}, it is easy to check that the \textit{a priori} bounds \eqref{est0}, \eqref{energy}, \eqref{est8}, \eqref{est9}, \eqref{rhox}, \eqref{test} and \eqref{hest1}--\eqref{hest3} for the solution $(\r^{R_i}, u^{R_i})$ are independent of $R_i$. Thus, there exists a sequence $(R_j)$, $R_j\rightarrow\iy$, such that $(\r^{R_j}, u^{R_j})$ converges to a limit $(\r, u)$ in the following weak sense:
\begin{align}
&u^{R_j}\rightharpoonup u\,\,\,\,\,\,(\mathrm{weakly\,*})\,\,\,\,\,\mathrm{in}\,\,\,L^\iy(0,T;D^1\cap H^2),\notag\\
&\r^{R_j}\rightharpoonup \r\,\,\,\,\,(\mathrm{weakly\,*})\,\,\,\,\,\mathrm{in}\,\,\,L^\iy(0,T;H^2\cap W^{2,6}),\notag\\
&u^{R_j}_t\rightharpoonup u_t\,\,\,\,\,(\mathrm{weakly})\,\,\,\,\,\mathrm{in}\,\,\,L^2(0,T;D^1)
\end{align}
for any $T>0$. Moreover, $(\r, u)$ also satisfy the regularity estimates \eqref{est0}, \eqref{energy}, \eqref{est8}, \eqref{est9} and \eqref{hest1}--\eqref{hest3}. Hence, we can easily show that $(\r, U)\triangleq(\r, u+b)$ is a weak solution to the original problem \eqref{eq}--\eqref{nab1} satisfying the regularity \eqref{class}. Again, the estimates \eqref{est0}, \eqref{energy}, \eqref{est8}, \eqref{est9}, \eqref{rhox}, \eqref{test} and \eqref{hest1}--\eqref{hest3} show that $(\r, U)$ is in fact the unique strong solution defined on $\Omega\times(0,T]$ for any $0<T<\iy$.

Moreover, \eqref{compact} follows from the Lemma \ref{support}. Therefore, the proof is completed.  $\hspace{2.7em}\Box$\\



\end{document}